\documentclass[11pt, reqno]{amsart}% use "amsart" instead of "article" for AMSLaTeX format
\usepackage[truedimen,margin=29truemm]{geometry}% See geometry.pdf to learn the layout options. There are lots.
\usepackage{graphicx}% Use pdf, png, jpg, or eps§ with pdflatex; use eps in DVI mode
\usepackage{bm}
\usepackage{amssymb}
\usepackage{amsmath}
\usepackage{amsthm}
\usepackage{mathtools}
\usepackage{url}
\usepackage{bigints}
\usepackage{enumerate} 
\usepackage{xcolor}

\newcommand{\DD}{\mathop{\mathrm{D}}}
\newcommand{\VOL}{\mathop{\mathrm{Vol}}}

\newcommand{\sgn}{\mathop{\mathrm{sgn}}} 
\newcommand{\ac}{\mathrm{ac}}

\newcommand{\re}{\mathop{\mathrm{Re}}} 
\newcommand{\im}{\mathop{\mathrm{Im}}} 

\newcommand{\supp}{\mathop{\mathrm{supp}}}

\newcommand{\N}{\mathbb{N}} 

\newcommand{\R}{\mathbb{R}}

\newcommand{\jap}[1]{\langle #1 \rangle}
\def\pa{\partial}
\def\a{\alpha}
\def\b{\beta}
\def\c{\gamma}
\def\d{\delta}
\def\e{\varepsilon}
\def\f{\varphi}
\def\g{\psi}

\def\l{\lambda}
\def\m{\mu}

\def\s{\sigma}

\def\x{\xi}

\numberwithin{equation}{section}

\theoremstyle{plain}%default
\newtheorem{thm}{Theorem}[section]
\newtheorem{proposition}[thm]{Proposition}
\newtheorem{lemma}[thm]{Lemma}

\theoremstyle{definition}

\newtheorem{example}[thm]{Example}

 \newtheorem{remark}[thm]{Remark}

 \newtheorem*{remarks*}{Remarks}
\newtheorem*{remark*}{Remark}

\title{Modified wave operators are unbounded on $L^p$ for $p \neq 2$}
\author{Akitoshi Hoshiya}
\address{Graduate School of Mathematical Sciences, The University of Tokyo, 3-8-1 Komaba, Meguro-ku, Tokyo 153-8914, Japan}
\email{hoshiya@ms.u-tokyo.ac.jp}

\author{Kouichi Taira}
\address{Faculty of Mathematics, Kyushu University, 744, Motooka, Nishi-ku, Fukuoka 819-0395, Japan}
\email{taira.kouichi.800@m.kyushu-u.ac.jp}
\begin{document}

\keywords{Modified wave operator, $L^p$-boundedness, Wave matrix, Hydrogen atom}

\subjclass{35J10, 35P25, 47A40, 35S30}

\maketitle
%\section{}
%\subsection{}

\begin{abstract}

We study the $L^p$-boundedness of Isozaki--Kitada type modified wave operators associated with Schr\"odinger operators $P=-\pa_x^2 + V$ in one dimension for potentials $V$ including long-range potentials such as negative Coulomb-like ones $V(x) = -(1+|x|^2)^{-\mu/2}$ for $\mu>0$. For such potentials, if $\mu \in (1, 2)$, we prove that the middle and the high energy parts are bounded on $L^p (\R)$ for any $p \in [1, \infty]$ but the low energy part is unbounded except for $p=2$. Moreover, if $\mu \in (0, 1]$, we show that even the middle energy part is unbounded except for $p=2$. Actually, regarding the $L^p$-boundedness in the middle and the high energy regimes, we give a complete classification of slowly decaying potentials without imposing any sign condition. 
\end{abstract}

\section{Introduction and Main results}
\subsection{Introduction}\label{26662043}
This is a continuation of our analysis \cite{HT} on Schr\"odinger operators with long-range/slowly decaying potentials, in particular with negative Coulomb-like potentials. Let $\Delta$ be the standard (nonpositive) Laplacian on $L^2 (\R^d)$ with $d \ge 1$ and we consider Schr\"odinger operators $P:=-\Delta + V$, where a potential $V \in C^{\infty} (\R^d; \R)$ satisfies
\begin{align}
|\pa^{\alpha} _x V(x)| \lesssim \jap{x}^{-\mu -|\alpha|} \label{26641742}
\end{align}
for some $\mu \in (0, 2)$ and any $\alpha \in \N^d _0$. Here we use a standard notation $\jap{x}:= \sqrt{1+|x|^2}$. It is well-known that the restriction of $P$ on $C^{\infty} _c (\R^d)$ has a unique self-adjoint extension with its domain $H^2 (\R^d)$ and it is also denoted by the same symbol $P$. Among these potentials, of particular interest is a negative Coulomb-like one $V(x) = -\jap{x}^{-1}$ in three dimension, in which case $P$ is an analogue of the Hamiltonian of the Hydrogen atom $H=-\Delta -|x|^{-1}$. In spite of its physical importance, mathematical analysis of the time-dependent Schr\"odinger equation associated with $H$ (even with $P$) is technically difficult and quantitative estimates for the solution are still largely unknown. Many of the difficulties mainly come from the completely different nature of the spectra of $H$ and $P$ at low energy from $P_0 :=-\Delta$. In fact, the spectra of $H$ and $P$ consist of the absolutely continuous part $[0, \infty)$ and infinitely many negative discrete eigenvalues accumulating to $0$, though the spectrum of $P_0$ is purely absolutely continuous \cite{RS}. Moreover, the asymptotics of generalized eigenfunctions are also different at low energy \cite{Su, HT}.

Recently, in one dimension, the authors proved the dispersive estimates \cite[Theorem 1.3]{HT} for the Schr\"odinger equation, namely $|e^{-itP} E_{\ac} (P) (x, x')| \lesssim |t|^{-\frac{1}{2}}$ if $V$ satisfies (\ref{26641742}) and $-V(x) \gtrsim \jap{x}^{-\mu}$. Here the structure of the spectrum of $P$ is just the same as the above $H$ and $E_{\ac} (P)$ denotes the orthogonal projection onto the absolutely continuous subspace. This is the first result on quantitative pointwise bounds for the Schr\"odinger propagator with negative slowly decaying potentials, though it is a decisive tool for well-posedness and scattering theory of nonlinear Schr\"odinger equations. For later convenience, we briefly recall outcomes and the strategy of the proof. The proof consists of two parts: The first step is a detailed asymptotic analysis of generalized eigenfunctions of $P$, including estimates on their derivatives with respect to an energy parameter. Then, with functional calculus and scattering theory, we can use them to write the Schr\"odinger propagator as a superposition of oscillatory integral operators with explicit phase functions. In the second step, using techniques from harmonic analysis like degenerate stationary phase theorems, we give a pointwise bound on such oscillatory integrals. See \cite{HT} for more details. At the end of this paragraph, we mention a recent work \cite{GJZ}, where dispersive estimates for $H$ are obtained in three dimension.

In this paper, as a next step of our study, we consider the $L^p$-boundedness of modified wave operators in one dimension associated with $P$. If the potential is of short range, i.e., $\mu >1$ in (\ref{26641742}), it is well-known that wave operators
\begin{align}
\tilde{W}_{\pm} := \operatorname*{s-lim}_{t \to \pm \infty} e^{itP} e^{-itP_0} \label{2665025}
\end{align}
exist as bounded operators on $L^2 (\R^d)$ and they are asymptotically complete, which means the range of $\tilde{W}_{\pm}$ coincide with $\mathrm{Ran}~ E_{\ac} (P)$. On the other hand, if $V$ is of long range, i.e., $\mu \le 1$ in (\ref{26641742}), the standard wave operators (\ref{2665025}) do not exist in general and we have to introduce \textit{modified wave operators}. There are several kinds of modifications including H\"ormander type, Yafaev type and Dollard type modifiers (see \cite[Chapter XXX]{Hor} and \cite{DG, Y} for textbook presentations), which are related to each other (see Subsection \ref{eq:sevwavops1} and \cite[\S 6.4 and \S 6.5]{DS}).
Here we employ the Isozaki--Kitada modifier $J=J_{\pm}$, which is a time-independent one introduced in \cite{IK}, and it enables us to define the following modified wave operators:
\begin{align}\label{eq:IKmodwave}
W_{\pm}:= \operatorname*{s-lim}_{t \to \pm \infty} e^{itP} J_{\pm} e^{-itP_0}.
\end{align}
Strictly speaking, for each fixed $\lambda_0 >0$, the operators $J_{\pm}=J_{\pm, \lambda_0}$ are given by oscillatory integral operators
\begin{align*}
J_{\pm}f(x) := \frac{1}{2\pi}\int_{\R^{2}} e^{i(\phi_{\pm} (x, \xi) -y\xi)} a_{\pm}(x, \xi) f(y) dyd\xi.
\end{align*}
The precise construction is provided in Subsection \ref{subsection:IK} for the middle and high energy regimes and Subsection \ref{subsection:lowIsoKita} for the low energy regime under an additional assumption on $V$.
%for $f \in C^{\infty} _c (\R^d)$, where the phase functions $\phi_{\pm} = \phi_{\pm, \lambda_0} \in C^{\infty} (\R^{2d}; \R)$ are approximate solutions of the Eikonal equation $|\pa_x \phi_{\pm} (x, \xi)|^2 +V(x) = |\xi|^2$, $a_{\pm}=a_{\pm, \lambda_0}$ are symbols of order $(0, 0)$ in scattering calculus, and \eqref{eq:IKmodwave} satisfies $W_{\pm} E_{P_0} ([\lambda_0, \infty)) (L^2 (\R^d))= E_{P} ([\lambda_0, \infty)) L^2 (\R^d)$ and $\|W_{\pm} E_{P_0} ([\lambda_0, \infty)) f\| = \|E_{P_0} ([\lambda_0, \infty)) f\|$ for $f \in L^2 (\R^d)$. Actually we can take strong limits of $W_{\pm}$ as $\l_{0} \to +0$ and the obtained operators are isometries \cite{DS}.
Geometrically, $J_{\pm}$ are Fourier integral operators in the sense that the phase functions parametrize the Lagrangian submanifolds $\{(x, \pa_x \phi_{\pm}, y, \xi) \mid y= \pa_{\xi} \phi_{\pm} (x, \xi)\}$. See \cite{Y, T1} for a simple construction of the Isozaki--Kitada modifier. 
   
Our aim here is to determine for which $p \in [1, \infty]$ the modified wave operators in \eqref{eq:IKmodwave} are bounded on $L^p (\R)$. In these thirty years, the $L^p$-boundedness of wave operators has been intensively studied, mostly focusing on Schr\"odinger operators with rapidly decaying potentials. See \cite{AY, B, BS1, BS2, DF, EGG1, EGG2, FY, GG, HY, MSZ, Ni, W, Ya1, Ya2, Ya3, Ya4} and references therein. One of the motivations is that, once $\tilde{W}_{\pm} \in \mathcal{L} (L^q)$ and $\tilde{W}^* _{\pm} \in \mathcal{L} (L^p)$ are obtained, the intertwining property
\begin{align*}
f(P) E_{\ac} (P)= \tilde{W}_{\pm} f(P_0) \tilde{W}^* _{\pm}
\end{align*}
reduces the $L^p \to L^q$-boundedness of $f(P)E_{\ac} (P)$ to that of $f(P_0)$, where the latter is a Fourier multiplier and easier to handle. Due to such a reason, the $L^p$-boundedness of wave operators has been a strong tool in nonlinear theory \cite{NS}. 

Thanks to the aforementioned works, it is known that if $V$ has sufficiently rapid decay and high regularity, $\tilde{W}_{\pm}$ are bounded on $L^p$ for any $p \in (1, \infty)$ as long as the threshold of the spectrum is regular \cite{AY, B, BS1, BS2, DF, W, Ya1, Ya2, Ya3}. However, the endpoint cases $p =1,  \infty$ are quite subtle and $\tilde{W}_{\pm}$ are not necessarily bounded even if the threshold is regular \cite{W, MWY1}. Beyond the regular cases, obstructions at threshold energy can shrink the range of $p$ in many cases \cite{CSWY, FY, GG, MWY2, Ya4} but in some situations $\tilde{W}_{\pm}$ are bounded for any $p \in (1, \infty)$ under further stronger decay on $V$ or under certain orthogonality of $V$ and zero-energy eigenfunctions \cite{AY, DF, EGG2, GG, W}. It should be also notable that, without sufficient regularity on $V$, the range of $p$ can shrink even if $\supp V$ is compact \cite{EGG1}. Finally we mention the recent work \cite{MSZ}, where the Schr\"odinger operator $-\Delta + a|x|^{-2}$ with $a \ge -(d-2)^2/4$ is considered. In $d \ge 2$, it was shown that $\tilde{W}_{\pm}$ are bounded for $p \in (1, \infty)$ if $a \ge 0$ but the range of $p$ shrinks if $a <0$ due to the strong negative singularity at zero. See also \cite{DGJ,FSWZZ} for Schr\"odinger operators with critical electromagnetic potentials.
The corresponding phenomena for the dispersive estimates were recently investigated \cite{JZ, T2}.

Though the above results consider the case $\mu \ge 2$ in (\ref{26641742}), our focus is mainly on $\mu \in (0, 2)$ and on modified wave operators $W_{\pm}$. As far as the authors know, this is the first result for long-range potentials in the middle energy regime and for slowly decreasing potentials in the low energy regime. Our findings are completely different phenomena from the aforementioned cases, regarding Schr\"odinger operators with slowly decaying potentials, especially with $-\jap{x}^{-\mu}$, which are of interest physically.
\subsection{Main results}
%We split the exposition into two cases, one dimensional and higher dimensional cases, where a more detailed analysis can be possible in the former case.
%\subsubsection{One dimensional cases}
%In one dimension, we have explicit solutions of the Eikonal equation $y_{\pm} (x, \xi) = \pm\int_{0}^{x} \sqrt{\xi^2 -V(s)} ds$ as long as $\xi^2 -V(s) >0$ for $s \in [0, x]$. In particular $y_{\pm}$ are global solutions if $V <0$. First we deal with general potentials, not necessarily attractive ones.
First, we discuss the boundedness of modified wave operators in the middle and high energy regimes. The low energy part should be sensitive to the sign and the shape of $V$ and it seems impossible to give a unified analysis.
 Just for the sake of brevity, for a potential $V$ satisfying (\ref{26641742}), we fix $\lambda_0 >0$ such that 
\begin{align}\label{eq:lam0def}
\lambda^2 _0 > |V(x)|\quad \text{for all $x \in \R$},
\end{align}
and consider energies $\l\geq \sqrt{2}\l_0$. The following theorem remains valid for any fixed constant $\l_0>0$, appealing to the results in \cite[\S 3, \S5]{HT}. However, since the restriction \eqref{eq:lam0def} makes both the exposition and notation clearer, we focus on the case where \eqref{eq:lam0def} holds. 

%We discuss the boundedness of modified wave operators in the middle and high energy regimes $|\xi| > \l_0$. The low energy part should be sensitive to the sign and the shape of $V$ and it seems impossible to give a unified analysis. See also \S \ref{26662100}. \red{Using $y_{\pm}$, we define $\phi = \phi_{\pm}$ (independent of the sign $\pm$) by setting $\phi (x, \xi)=y_+ (x, \xi)$ if $\xi > \l_0$ and $\phi (x, \xi) = y_{-} (x, \xi)$ if $\xi <-\l_0$, which appears in the modifier $J$. In addition, we set the amplitude $a \equiv 1$ in $J$. TODO: $\phi_{\pm}$ should depend on $\pm$.} Then we have the $L^2$-convergence of $e^{itP} J e^{-itP_0} F(P_0)f$ as $t \to \pm \infty$ for any $F \in C^{\infty} \cap L^{\infty} (\R; \R)$ with $\supp F \subset (\l^2 _0, \infty)$ and for any $f \in \mathcal{S} (\R)$. Hence $W_{\pm} F(P_0)$ are well-defined bounded operators\footnote{Actually $W_{\pm} F(P_0)$ are equal to standard wave operators $\tilde{W}_{\pm} F(P_0)$ as long as $\mu >1$.} on $L^2 (\R)$. Our first result is

\begin{thm}[Middle and high energy wave operators]\label{26661518}
Let $V \in C^{\infty} (\R; \R)$ satisfy \eqref{26641742} for some $\mu>0$, $\l_0>0$ satisfy \eqref{eq:lam0def} and $W_{\pm}$ be the modified wave operators introduced in Subsection \ref{subsection:IK}. We set $M_{\DD} (x):= \frac{1}{2} \int_{0}^{x} V(s)ds$ for $x \in \R$. Then

\noindent$(i)$ \underline{\textbf{The case $M_{\DD} \in L^{\infty} (\R)$}} Let $\psi \in C^{\infty} (\R; [0, 1])$ be such that $\psi (\l) =1$ for $\l\gg 1$ and $\supp \psi \subset (2\l^2 _0, \infty)$. Then $W_{\pm} \psi (P_0)$ are bounded on $L^p (\R)$ for any $p \in [1, \infty]$.

\noindent$(ii)$ \underline{\textbf{The case $M_{\DD} \notin L^{\infty} (\R)$}} Let $p \in [1, \infty]$ and $\psi \in C^{\infty} _c (\R; \R)$ be such that $\supp \psi \subset (2\l^2 _0, \infty)$ and not identically zero. Then $W_{\pm} \psi (P_0)$ are bounded on $L^p (\R)$ if and only if $p=2$.
\end{thm}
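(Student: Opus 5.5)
The plan is to write $W_{\pm}\psi(P_0)$ explicitly through generalized eigenfunctions and then compare it with a Dollard-type phase multiplier. In one dimension, for $|\xi|>\sqrt2\l_0$, the WKB phase $\phi(x,\xi)=\sgn(\xi)\int_0^x\sqrt{\xi^2-V(s)}\,ds$ solves the eikonal equation globally by \eqref{eq:lam0def}. The generalized eigenfunctions of $P$ at energy $\xi^2$ have the form $e^{i\phi(x,\xi)}(1+r(x,\xi))$ on the relevant half line, plus a reflected wave. Here $r$ is a symbol of order $-1$ in $x$, with $\xi$-derivative bounds coming from the asymptotic analysis of \cite[\S3, \S5]{HT}.

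Using the stationary representation of $W_{\pm}$ (the Isozaki--Kitada construction of Subsection \ref{subsection:IK} makes $W_\pm$ the operator sending $e^{ix\xi}$ to the outgoing/incoming generalized eigenfunction), I would write
\[
W_{\pm}\psi(P_0)f(x)=\frac1{2\pi}\int e^{i\phi(x,\xi)-iy\xi}\,b_{\pm}(x,\xi)\,\psi(\xi^2)f(y)\,dy\,d\xi
\]
plus reflection-type terms, where $b_\pm$ is a symbol of order $0$ in $(x,\xi)$.

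Next, expand $\phi(x,\xi)=x\xi-\frac{M_{\DD}(x)}{\xi}+\Phi(x,\xi)$, using $\sqrt{\xi^2-V}=|\xi|-\frac{V}{2|\xi|}+O(V^2|\xi|^{-3})$. The remainder satisfies $\pa_x\Phi=O(\jap{x}^{-2\mu}|\xi|^{-3})$. When $\mu>1/2$ this is integrable and $\Phi$ is a bounded symbol. When $\mu\le1/2$ one must keep finitely many more terms $\xi^{-2k-1}\int_0^x V^{k+1}$. These are all dominated by the $M_{\DD}$ term and, in case (ii), they are bounded exactly when $M_{\DD}$ is, since $|\int_0^x V^{k+1}|\le\int_0^x|V|^{k+1}$.

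For (i), when $M_{\DD}\in L^\infty$ all the correction phases are bounded smooth symbols. Then $e^{i(\phi-x\xi)}b_\pm\psi(\xi^2)$ is a symbol whose $\xi$-derivatives are $O(|\xi|^{-k})$ uniformly in $x$, and whose $x$-derivatives decay like $\jap{x}^{-1}$. Boundedness on $L^p$ for all $p\in[1,\infty]$ follows by splitting the symbol: the $x$-independent limits $\lim_{x\to\pm\infty}$ give Fourier multipliers, namely transmission and reflection coefficients times $e^{\mp iM_{\DD}(\pm\infty)/\xi}$. Each is smooth in $\xi$ with an asymptotic expansion at infinity, so it is of the form constant plus an $L^1$-kernel operator. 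The remainders have kernels that decay in $x-y$ after integration by parts in $\xi$, and decay in $x$ through the $\jap{x}^{-1-\epsilon}$ gain, hence are Schur-bounded on every $L^p$. The delicate point is $p=1,\infty$ at high energy, where $\psi$ is not compactly supported. I would handle it by noting that $\phi-x\xi$ and the reflection coefficient are $O(|\xi|^{-1})$ with symbolic derivatives, so the high-frequency part equals the identity (times $\psi(P_0)$) plus an operator with integrable kernel.

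For (ii), compact support in $\xi$ makes all amplitudes harmless, and the operator is essentially $f\mapsto$ (a bounded-on-$L^p$ operator) composed with the oscillatory integral $T f(x)=\int e^{i(x\xi-M_{\DD}(x)/\xi)}\psi(\xi^2)\hat f(\xi)\,d\xi$. This is the main obstacle: showing $T$ is unbounded for $p\ne2$. My plan is to pick $x_n$ with $|M_{\DD}(x_n)|\to\infty$ and test on $f_n=$ translates of a fixed Schwartz function with Fourier support in $\supp\psi$. Near $x_n$, $M_{\DD}$ varies slowly since $M_{\DD}'=V/2=O(\jap{x}^{-\mu})$. So on windows of length $\ll|x_n|$ the operator acts like the multiplier $e^{-iM_n/\xi}$ with $M_n=M_{\DD}(x_n)$ large. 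Stationary phase shows $\|e^{-iM/\xi}\chi(\xi)\|_{M_p}\gtrsim |M|^{|1/2-1/p|}\to\infty$ for $p\ne2$, since the kernel is spread over length $\sim|M|$ with height $|M|^{-1/2}$. By $L^2$-duality it suffices to treat $p<2$.

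To make the localization rigorous I would combine this with $L^2$-isometry and translation: compare $T$ near $x_n$ with the multiplier via the Taylor bound $|M_{\DD}(x)-M_n-M_{\DD}'(x_n)(x-x_n)|\lesssim |x-x_n|^2\jap{x_n}^{-\mu-1}$. The linear term is only a translation in frequency-phase and harmless. Since the kernel spreads over length $|M_n|\lesssim\jap{x_n}^{1-\mu}$, which is $\ll\jap{x_n}^{(1+\mu)/2}$, the quadratic error is small. Finally, the reflected and remainder terms are bounded on $L^p$ by the argument of (i), so they cannot cancel the blow-up.
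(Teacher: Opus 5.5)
In (i), you split off the $x$-independent limits of the symbol as $x\to\pm\infty$, with multipliers $e^{\mp iM_{\mathrm{D}}(\pm\infty)/\xi}$, and claim a $\langle x\rangle^{-1-\epsilon}$ gain for the remainder. Neither is available under $M_{\mathrm{D}}\in L^\infty$.
\begin{itemize}
\item For the potential of Example \ref{26662124}, $M_{\mathrm{D}}$ is bounded but $\lim_{x\to\infty}M_{\mathrm{D}}(x)$ does not exist.
\item Even for $\mu\in(1,2)$ the convergence rate is only $\langle x\rangle^{1-\mu}$.
\end{itemize}
The split is also unnecessary, because the paper uses only bounds uniform in $x$. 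It writes the kernel as $\delta(x-x')+\mathrm{I}_{x,x'}+\mathrm{II}_{x,x'}$, with amplitudes $f_1-1$ and $f_2$ satisfying $|\partial_\lambda^\alpha f_j|\lesssim\langle\lambda\rangle^{-\alpha}$ for $\alpha\le 2$, $f_1-1=c\lambda^{-1}\psi(\lambda^2)M_{\mathrm{D}}(x)+O(\langle\lambda\rangle^{-2})$, and $f_2=O(\langle\lambda\rangle^{-3})$. Two integrations by parts, plus boundedness of $\int_0^\infty\lambda^{-1}\psi(\lambda^2)\sin((x-x')\lambda)\,d\lambda$, give $|\mathrm{I}_{x,x'}|\lesssim\langle x-x'\rangle^{-2}$ and $|\mathrm{II}_{x,x'}|\lesssim\langle x+x'\rangle^{-2}$. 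Young's inequality then finishes. This is your own high-frequency remark, applied at all frequencies at once.

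What is genuinely missing in (i) is the input that makes these bounds uniform in $x$. The phase remainder is $O(\lambda^{-3}\int_0^xV^2)$, so you need $V\in L^2$. Your justification (the corrections are dominated by $M_{\mathrm{D}}$ because $|\int_0^xV^{k+1}|\le\int_0^x|V|^{k+1}$) gives nothing without a sign condition, since $\int_0^xV^2$ is not controlled by $|M_{\mathrm{D}}(x)|$. The paper's Lemma \ref{26542109} supplies it by integrating by parts: $\int_{-R}^RV^2=[FV]_{-R}^R-\int_{-R}^RFV'$, with $F=2M_{\mathrm{D}}$ bounded and $|V'|\lesssim\langle x\rangle^{-1-\mu}$.

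In (ii), several steps fail.

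\begin{itemize}
\item \emph{Duality.} The reduction ``by duality it suffices to treat $p<2$'' is not available. Duality is free for the model multiplier, but $W_+\psi(P_0)$ is not a multiplier: its adjoint has kernel $\overline{K(x',x)}$, with $M_{\mathrm{D}}$ evaluated at the integration variable. The paper needs a separate test function for $p>2$, namely $u_n(x')=|M_{\mathrm{D}}(x_n)|^{-1/p}1_{\Omega_1}(x_n,x')e^{-i(x_n-x')\phi_{x_n,x'}(\lambda_{x_n,x'})+i\pi/4}$. This is phase-matched to the stationary-phase expansion of the kernel at $x=x_n$, and the paper shows the phase mismatch stays $O(\epsilon)$ for $|x-x_n|<\epsilon$.

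\item \emph{Taylor localization.} Your reduction to $e^{-iM_n/\xi}$ via a second-order Taylor expansion of $M_{\mathrm{D}}$ fails for small $\mu$. For $V=-\langle x\rangle^{-\mu}$ one has $|M_n|\sim x_n^{1-\mu}$, so the quadratic error over a window of length $|M_n|$ has size $x_n^{1-3\mu}$. The inequality $\langle x_n\rangle^{1-\mu}\ll\langle x_n\rangle^{(1+\mu)/2}$ you invoke holds only for $\mu>1/3$.

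\item \emph{No localization is needed for $p<2$.} For a bump $u_n$ at $x_n$, $W_{+,+,+,\psi}u_n(x)$ is an oscillatory integral with phase $\lambda(x-x_n)-M_{\mathrm{D}}(x)/\lambda+R(x,\lambda)$. This phase involves only the value $M_{\mathrm{D}}(x)$, so stationary phase applies pointwise where $-M_{\mathrm{D}}(x)/(x-x_n)\approx\lambda_1^2$.

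\item \emph{Choice of $x_n$.} The pointwise stationary phase does require $R=o(M_{\mathrm{D}})$ there, which fails along arbitrary sequences with $|M_{\mathrm{D}}(x_n)|\to\infty$. For the potential of Example \ref{26662236} with $\mu<1/2$, there are such $x_n$ with $|M_{\mathrm{D}}(x_n)|=o(\int_0^{x_n}V^2)$, so your domination claim is false. Moreover $R$ is unbounded when $\mu<1/2$ (already for $V=-\langle x\rangle^{-\mu}$), so it cannot be absorbed into a bounded operator composed with $T$ as you propose. The fix is Lemma \ref{2658032}: take $x_n$ maximizing $|M_{\mathrm{D}}|$ on $[0,n]$, and the same integration by parts yields $\int_0^{x_n}V^2=o(|M_{\mathrm{D}}(x_n)|)$.

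\item \emph{Reflected pieces.} These are not bounded ``by the argument of (i)'' once $M_{\mathrm{D}}$ is unbounded. The paper removes $W_{+,-,\pm}$ by taking Fourier support in $(0,\infty)$. It controls $W_{+,+,-}$ by non-stationary phase, using $\partial_\lambda(y(x,\lambda)+\lambda x')=x+x'+o(x)$.
\end{itemize}
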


\begin{remark}
\noindent$(1)$
D'Ancona-Fanelli \cite[Lemma 2.1]{DF} showed that $\tilde{W}_{\pm}\g(P_0)$ are bounded on $L^p$ for all $1\leq p\leq \infty$ under the assumption $V\in L^1(\R)$. Then, our result in $(i)$ for $\mu>1$ follows from their result and the equivalence of the $L^p$-boundedness of $W_{\pm}\g(P_0)$ and $\tilde{W}_{\pm}\g(P_0)$, which is stated in Lemma \ref{lem:waveL^pequiv} $(i)$. On the other hand, our results include weakly oscillating potentials which are not in $L^1$, as is discussed in Example \ref{26662124}.

\noindent$(2)$ By Theorem \ref{26661518} $(ii)$ and Lemma \ref{lem:waveL^pequiv} $(ii)$, Dollard's modified wave operators, Yafaev's modified operators, and H\"ormander's modified wave operators are not bounded on $L^p$ for $p\neq 2$ when $M_{\DD}\notin L^{\infty}$.

\end{remark}

Note that there is NO assumption on the decay rate $\mu >0$. Theorem \ref{26661518} implies even if $V$ is repulsive, i.e., $V(x) \gtrsim \jap{x}^{-\mu}$ for some $\mu \in (0, 1]$, modified wave operators are unbounded in the middle energy regime, though the dispersive estimates (without any loss) are valid \cite[\S 5]{HT}. 

Next we focus on attractive Coulomb-like potentials for which we give more detailed analysis including the low energy regime. We can define the well-behaved Isozaki--Kitada modified wave operators $W_{\pm}$ in the low-energy regime, as introduced in Section \ref{subsection:lowIsoKita} and motivated by the paper \cite{DS}. Strictly speaking, the modified wave operators $W_{\pm}$ considered here are not identical to those in Theorem \ref{26661518}. Nevertheless, they coincide on $\mathrm{Ran} E_{P_0}([2\l_0^2,\infty))$ and therefore we continue to use the same notation.

For a fixed constant $c>0$, let $\g_c \in C^{\infty} _c ([0, \infty); [0, 1])$ satisfy $\g_c (\lambda)=1$ for $\lambda \in [0, c]$ and $\supp \g_c \subset [0, 2c]$. %Let $\psi_{c} \in C^{\infty} (\R; [0, 1])$ satisfy $\psi_c (\lambda) = 1$ for $\lambda \in [2c, \infty)$ and $\supp \psi_c \subset [c, \infty)$. Then

\begin{thm}[Low energy wave operators for negative Coulomb-like potentials]\label{2665014}
Let $V \in C^{\infty} (\R; \R)$ satisfy \eqref{26641742} and $-V(x) \gtrsim \jap{x}^{-\mu}$ for some $\mu \in (0, 2)$ and $W_{\pm}$ be the modified wave operators introduced in Subsection \ref{subsection:lowIsoKita}. We fix any constant $c>0$. Then, for $p \in [1, \infty]$, $W_{\pm} \g_c (P_0)$ are bounded on $L^p (\R)$ if and only if $p=2$.

%\noindent$(i)$ \underline{\textbf{The case $\mu \in (1, 2)$}} $W_{\pm} \psi_c (P_0)$ are bounded on $L^p (\R)$ for any $p \in [1, \infty]$. On the other hand, for $p \in [1, \infty]$, $W_{\pm} \g_c (P_0)$ are bounded on $L^p (\R)$ if and only if $p=2$.
%
%\noindent$(ii)$ \underline{\textbf{The case $\mu \in (0, 1]$}} Let $p \in [1, \infty]$. For any $\psi \in C^{\infty} _c ((0, \infty); [0, 1])$ which is not identically zero, $W_{\pm} \psi (P_0)$ are bounded on $L^p (\R)$ if and only if $p=2$.
\end{thm}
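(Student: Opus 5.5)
Since $W_\pm$ are isometric on $L^2$ (they intertwine $P_0$ with $E_{\ac}(P)$), boundedness for $p=2$ is automatic. We only need unboundedness for $p\neq 2$. By duality it is enough to treat $p\in[1,2)$. Indeed, $(W_\pm\g_c(P_0))^* = \g_c(P_0)W_\pm^*$, and the same construction applied to the adjoint handles $p>2$. Interpolation with $L^2$ then lets us reduce to showing unboundedness on $L^p$ for $p$ near a single endpoint. The main tool will be the low-energy representation of $W_\pm\g_c(P_0)$ through the generalized eigenfunctions of $P$ from \cite{HT}. The intertwining relation gives
\begin{align*}
W_\pm \g_c(P_0) f(x) = \frac{1}{2\pi}\int e^{i\Phi_\pm(x,\xi)} b_\pm(x,\xi)\g_c(\xi^2)\hat f(\xi)\,d\xi + R f,
\end{align*}
where $\Phi_\pm(x,\xi)\sim \pm\int_0^x\sqrt{\xi^2-V(s)}\,ds$ plus a phase shift, $b_\pm$ is an amplitude with symbol-type bounds, and $R$ is a remainder that is bounded on every $L^p$. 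Since $-V\gtrsim\jap{x}^{-\mu}$, the root $\sqrt{\xi^2-V}$ stays positive down to $\xi=0$, which makes this representation valid uniformly at low energy.

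The core of the argument is to test the operator on rescaled wave packets at small frequency $\xi_0\to0$. I would take $\hat f_{\xi_0}(\xi)=\chi((\xi-\xi_0)/\delta)$ with $\delta=\delta(\xi_0)\ll\xi_0$, so that $\|f_{\xi_0}\|_{L^p}\sim\delta^{1-1/p}$. For $|x|\lesssim \jap{\xi_0}^{-2/\mu}$-type scales the potential dominates and $\partial_x\Phi_\pm\approx\sqrt{-V(x)}$ essentially does not depend on $\xi$. Consequently the packet is not transported as in the free case; it spreads over a region whose size is governed by $\partial_\xi\Phi_\pm(x,\xi)\approx \xi\int_0^x(-V)^{-1/2}\,ds\sim \xi\jap{x}^{1+\mu/2}$. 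A stationary phase (or direct size) computation in the $\xi$-integral then gives $|W_\pm\g_c(P_0)f_{\xi_0}(x)|\gtrsim \delta$ on a set $\{x : |\partial_\xi\Phi_\pm|\lesssim\delta^{-1}\}$. This set has length $L(\xi_0,\delta)\sim(\xi_0\delta)^{-1/(1+\mu/2)}$, which is much larger than the free spreading length $\delta^{-1}$ once $\xi_0$ is small. Comparing norms, the ratio $\|Wf\|_p/\|f\|_p\gtrsim (L\delta)^{1/p-1}$, and for $p<2$ this blows up as $\xi_0\to0$ with $\delta=\xi_0^2$. Equivalently, the operator behaves like a change of variables $x\mapsto y(x)$ with unbounded Jacobian $\sqrt{-V(x)}^{-1}$ relative to the low frequency $\xi$. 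Operators of that kind preserve $L^2$ but cannot preserve $L^p$ for $p\ne2$.

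The main obstacle will be controlling the amplitude $b_\pm$ and the $\xi$-derivatives of the phase uniformly as $\xi\to0$. This is needed to justify that the localized region really carries mass $\gtrsim\delta$ and that $R$ and the error terms do not cancel the main term. To handle it I would rely on the derivative-in-energy estimates for generalized eigenfunctions from \cite{HT}, in particular the WKB asymptotics valid in the regime $\xi^2\lesssim|V(x)|$, and take the test functions supported where $x$ has a fixed sign so that only one of the two scattering channels contributes. If the quantitative lower bound proves delicate, a fallback is to argue through Lemma \ref{lem:waveL^pequiv}. That lemma would let me pass to a simpler modified wave operator (Dollard type), whose kernel is explicit enough that a scaling argument in the low-frequency limit yields the same blow-up.
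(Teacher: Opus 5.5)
There is a genuine gap: your argument never reaches $p>2$. Duality says $T=W_+\g_c(P_0)$ is bounded on $L^p$ iff $T^*$ is bounded on $L^{p'}$. It does not let you swap $p$ and $p'$ for $T$ itself, and $T$ has no such symmetry, since its kernel has phase $y(x,\lambda)-\lambda x'$. Low-frequency packets are intrinsically a $p<2$ mechanism. By the same scaling count as below (with the correct amplitude), they make $W_+$ expand $L^p$-norms for $p<2$ and shrink them for $p>2$. Since $T^*(W_+h)=\g_c(P_0)h$, feeding such packets to the adjoint only shows that $T^*$ is unbounded on $L^{q}$ for $q>2$, which is the $p<2$ statement again. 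The paper therefore builds a completely different sequence for $p>2$:
\begin{itemize}
\item it works at $x\approx-n^{\gamma}$ with $\gamma\in(3/(\mu+1),2/\mu)$ and does stationary phase in $\lambda$ with the large parameter $L_{n,x}=M_1(x)/(2n^2)$, the Taylor remainder $\mathcal{R}_{n,x}$ being a perturbation;
\item it takes $u_n(x')$ of modulus $V_n^{-1/p}$ on a set of length $V_n=n^{\gamma(1+\mu/2)-1}$, with phase conjugate to the stationary value of the kernel at the reference point;
\item these phases stay $O(\epsilon)$-close on the window $|x+n^{\gamma}|\le\epsilon n^{2-\gamma\mu/2}$, which yields growth $n^{(\frac12-\frac1p)(\gamma\mu+\gamma-3)}$.
\end{itemize}
Your interpolation remark also runs backwards. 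Riesz--Thorin turns unboundedness near $p=2$ into unboundedness at the endpoints, not conversely, so nothing reduces to $p$ near $1$ or $\infty$.

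For $p<2$ your idea matches the paper's: a Fourier packet at frequency $\xi_0\to0$, observed where $y(x,\lambda)-M_0(x)$ does not oscillate. The quantitative claim, however, is wrong.
\begin{itemize}
\item A bound $|W_+f|\gtrsim\delta$ on a set of length $L\gg\delta^{-1}$ would give $\|W_+f\|_2\gg\|f\|_2$, contradicting isometry. It would also give the ratio $(L\delta)^{1/p}$, not $(L\delta)^{1/p-1}$.
\item You dropped the amplitude: $b_{+,+}(x,\lambda)=2i\lambda^{1/2}(-V(x))^{-1/4}(\overline{\mathrm{Wr}(0)})^{-1}(1+o(1))$ as $x\to-\infty$, valid only when $\lambda=o(\jap{x}^{-\mu/2})$.
\item With it, $|W_+f(x)|\sim\xi_0^{1/2}\delta\jap{x}^{\mu/4}$ for $|x|\lesssim L=(\xi_0\delta)^{-2/(2+\mu)}$. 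For $\delta=\xi_0^{a}$ one finds $\|W_+f\|_p^p/\|f\|_p^p\sim\xi_0^{(2-a\mu)(p-2)/(2(2+\mu))}$.
\item This blows up for $p<2$ only when $a<2/\mu$, which is exactly the condition that $\xi_0\ll\jap{x}^{-\mu/2}$ on all of $|x|\lesssim L$. Your choice $\delta=\xi_0^2$ therefore fails for $\mu\in[1,2)$. The paper's $\delta\sim\xi_0\sim1/n$ gives growth $n^{\frac{2-\mu}{2+\mu}(\frac1p-\frac12)}$ for every $\mu\in(0,2)$.
\item The cross term is not an $L^p$-bounded remainder. The paper only shows $W_{+,+,-}$ is small on the observation region $x<0$, using $|b_{+,-}(x,\lambda)|\lesssim\lambda^{1/2}\jap{x}^{5\mu/4-2}$.
\item Your fallback through Lemma~\ref{lem:waveL^pequiv} is unavailable. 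That lemma only concerns energies above $\lambda_0$, and the Dollard comparison needs $\mu>1/2$.
\end{itemize}
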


\begin{remark}\label{26662136}
%\red{delete Before we turn to higher dimensional cases}, we give small remarks.

\noindent$(1)$ When $1<\mu<2$, $W_{\pm}$ here are bounded in the middle and high energy regimes by Theorem \ref{26661518}. Theorem \ref{2665014} demonstrates that the failure of the $L^p$-boundedness in this case is entirely due to their low energy behavior. 

\noindent$(2)$
There are some results dealing with $L^1 \to L^{1, \infty}$ or $L^{\infty} \to \mathrm{BMO}$ boundedness of wave operators, see e.g., \cite{HY, MWY1}. In our cases where the $L^p$-boundedness is true only for $p=2$, both $L^1 \to L^{1, \infty}$ and $L^{\infty} \to \mathrm{BMO}$ boundedness also fail by a simple combination of the $L^2$-boundedness, interpolation argument and proof by contradiction.

\noindent$(3)$ It seems that the assumption (\ref{26641742}) is needed only for $\alpha =0, 1, 2, 3, 4$. %\red{Jost解の表示を得るために2回必要で，2回部分積分している部分があるので$V$についての微分は多分$4$回必要．前の論文ではシンボルの1回微分しか使っていないので，$V$についての微分は多分$3$回必要だった？}
In fact, once the integral formula of modified wave operators in terms of wave matrices is justified, all the calculations in this paper work well for $C^4$ potentials in one dimension. 
\end{remark}

Note that zero is neither an eigenvalue nor a resonance in this case \cite{HT}. Nevertheless the $L^p$-boundedness is violated. This is in stark contrast to the case $\mu >2$ by D'Ancona--Fanelli \cite{DF}, in which standard wave operators are bounded on $L^p (\R)$ for $p \in (1, \infty)$ if zero is a regular point. Our result suggests slow decay and negativity of a potential lead to the failure of the boundedness even if there is no obstruction at threshold energy. However we should recall from \S \ref{26662043} that the dispersive estimates HOLD without any loss in this setting, hence Theorem \ref{2665014} reveals a gap between the $L^p$-boundedness of wave operators and the dispersive estimates.

\subsection{Examples}

In the following we discuss several types of potentials. 

\begin{example}
Let $V(x)=c\jap{x}^{-\mu}$ with $c\neq 0$. Then it satisfies \eqref{26641742}. Moreover, $M_{\DD}\in L^{\infty}(\R)$ if and only if $\mu>1$. Therefore, for $p\neq 2$, the modified wave operators $W_{\pm}$ are $L^p$-bounded in the middle and high energy regimes if and only if $\mu>1$. When $1<\mu<2$ and $c<0$, $W_{\pm}$ are unbounded on $L^p$ in the low enegy regime.

\end{example}

The next one satisfies (\ref{26641742}) for $\mu =1$ (and not for $\mu >1$) but we have the $L^p$-boundedness of modified wave operators. 

\begin{example}\label{26662124}
Let $V \in C^{\infty} (\R; \R)$ be such that $V(x)=(1+|x|)^{-1}\sin(\log(1+|x|))$ for $|x| \ge 1$. Then $V$ satisfies (\ref{26641742}) and $M_{\DD} \in L^{\infty} (\R)$. In fact,
\begin{align*}
\int_1^xV(s)ds=\int_1^x\frac{\sin(\log(1+s))}{(1+s)} ds=\int_{\log 2}^{\log (1+x)}\sin(t)dt=\cos(\log 2)-\cos(\log (1+x)) 
\end{align*}
for $x\geq 1$. The negative region can be estimated by the symmetry $V(x) =V(-x)$ for $|x| \ge 1$. Hence Theorem \ref{26661518} yields the $L^p$-boundedness of modified wave operators in the middle and high energy regimes for any $p \in [1, \infty]$.
\end{example}

The point in Example \ref{26662124} is that due to the logarithmic oscillation of $V$, it can be regarded as a short range potential when averaged. The next two examples are also oscillating potentials but modified wave operators are not bounded because of their weakness of oscillation. 

\begin{example}\label{26662236}
We define $V(x):=(\jap{x}^{1-\m}\sin(\log\jap{x}) )'$ for $\mu \in (0, 2)$. Then $V$ satisfies (\ref{26641742}) and $M_{\mathrm{D}}(x)=\jap{x}^{1-\m}\sin(\log\jap{x})$. Hence if $\mu \in [1, 2)$, we obtain $M_{\DD} \in L^{\infty} (\R)$ and the $L^p$-boundedness of modified wave operators in the middle and high energy regimes for any $p \in [1, \infty]$. On the other hand, if $\mu \in (0, 1)$, we have $M_{\DD} \notin L^{\infty} (\R)$. To see this, we can take a sequence $x_n \to \infty$ such that $M_{\mathrm{D}}(x_n)\to \infty$ as $ n\to \infty$ (just take $\log\jap{x_n}= \frac{\pi}{2}+2\pi n$). Therefore modified wave operators are unbounded on $L^p (\R)$ with $p \neq 2$ in the middle energy regime.
\end{example}

\begin{example}\label{26662259}
We fix $\mu \in (0, 1]$. Let $V \in C^{\infty} (\R; \R)$ be such that $V(x)=(1+C_{\mu}\sin(\log|x|))|x|^{-\mu}$ for $|x|\geq 1$, which satisfies (\ref{26641742}). Here we set $C_1 = 2$ and $C_{\mu} \in (1, (2-\mu)^{-1} (1-\mu + (1-\mu)^{-1}) )$ for $\mu \in (0, 1)$. Though $V$ changes the signature infinitely many times due to the oscillation, we have $M_{\DD} \notin L^{\infty} (\R)$. Indeed, for $\mu=1$,
\begin{align*}
\int_1^xV(s)ds=(\log x)-2\cos(\log x)+2\quad (x\geq 1)
\end{align*}
and hence $\int_1^xV(s)ds\gtrsim \log x$ for $x\gg 1$. For $0<\mu<1$, 
\begin{align*}
\int_1^xV(s)ds=\frac{1}{1-\mu}(x^{1-\mu}-1)+\frac{C_{\mu} x^{1-\mu}((1-\mu)\sin(\log x)-\cos(\log x))+C_{\mu}}{(1+(1-\mu)^2)},
\end{align*}
where we use $\int_1^x\frac{\sin(\log x)}{x^{\mu}}dx=\int_0^{\log x}(\sin u)e^{(1-\mu)u}du=\frac{1}{1+(1-\mu)^2}[e^{(1-\mu)u}((1-\mu)\sin u-\cos u)]_{u=0}^{u=\log x}$. Since $(1-\mu)^{-1}> C_{\mu} (2-\mu)(1+(1-\mu)^2)^{-1}$, we have  $\int_1^xV(s)ds\gtrsim x^{1-\mu}$ for $x\gg 1$. Therefore modified wave operators are unbounded on $L^p (\R)$ with $p \neq 2$ in the middle energy regime.
\end{example}

%\subsubsection{Higher dimensional cases}
%\begin{thm}\label{2665015}
%Let
%\end{thm}
%By the invariance principle, the $L^p$-unboundedness of $W_{\pm} \psi (P_0)$ implies that of $W_{\pm} (\sqrt{P}, \sqrt{P_0}) \psi (P_0)$, which are modified wave operators for the pair $(\sqrt{P}, \sqrt{P_0})$.

\subsection{Further observations}\label{26662100}
\underline{\textbf{Repulsive short range potentials}}
It should be interesting to consider the $L^p$-boundedness for $P=-\pa^2 _x + V(x)$ in the low energy regime, where $V$ satisfies (\ref{26641742}) and $V(x) \gtrsim \jap{x}^{-\mu}$ for some $\mu \in (1, 2)$. In fact, it has been observed that $P$ satisfies better estimates than $P_0$ at low energy. For example, uniform resolvent estimates hold with slowly decaying weights \cite{N1}, and the Riesz operator $P^{-1}$ is actually a pseudodifferential operator, not a singular integral operator \cite{SW, T4}. Moreover, in higher dimensions, the Strichartz and dispersive estimates are proved without any loss \cite{BTVZ, M, T3}. Hence (modified) wave operators might be bounded on $L^p (\R)$ for $p \neq 2$, which are contrast with negative Coulomb-like potentials. See also \cite[\S 1.4]{HT} for more technical comments.

\noindent
\underline{\textbf{Oscillating short range potentials}}
Though modified wave operators are bounded in the middle and high energy regimes for $V$ in Examples \ref{26662124} and \ref{26662236} with $\mu \in [1, 2)$, the low energy regime is completely unknown. This is because our construction of generalized eigenfunctions in \cite[\S 3]{HT}, in particular the reduction with Liouville's transform is not suitable for such models at low energy. Possibly we could employ a construction of the Jost functions in \cite[Vol. III]{RS} for oscillating potentials but more works are needed to obtain uniform bounds of (derivatives of) the Jost functions with respect to an energy parameter.

%\noindent
%\underline{\textbf{Boundedness in higher dimensions}} \red{後々のことを考えるとNo comment の方がいいかも?任せます}
%In higher dimensions, it is non-trivial to show the $L^p$-boundedness even if we assume (\ref{26641742}) for some $\mu \in (1, 2)$. In one dimension, we know the precise asymptotic structure of the wave matrix (or the scattering matrix) near spatial infinity in any energy regime. This is crucial when we prove Theorem \ref{26661518} $(i)$. In higher dimensions, wave matrices are integral operators with generalized eigenfunctions in their kernels, which are no more solutions to ODEs and harder to analyze. Perhaps asymptotic expansions of outgoing/incoming resolvents in \cite{Su} could be useful. We leave this problem as a future work.

\noindent
\underline{\textbf{Applications}}
Since modified wave operators also enjoy intertwining properties \cite{IK}, we can use Theorem \ref{26661518} $(i)$ to deduce the dispersive, Strichartz, uniform Sobolev estimates in the middle and high energy regimes for many dispersive/hyperbolic equations. Proofs of such applications are well-known \cite{MSZ} and we do not repeat them here.

\subsection{Idea of the proof}
Due to the slow decay of $V$, perturbative arguments like the Born expansion \cite{DF, Ya1} do not work well even in the middle and high energy regimes and even for positive results. Moreover, in the standard stationary representation formula for the usual wave operators \cite{CSWY, DF, EGG2, FY, GG, Ya1, Ya2, Ya3, Ya4}
\begin{align*}
\tilde{W}_{+} = \mathrm{Id} - \frac{2}{\pi} \int_{0}^{\infty} R(\l^2 -i0) V \im R_0 (\l^2 +i0) \l d\l,
\end{align*}
the expression for the second term in terms of the Jost functions is rather complicated.
%becomes quite complicated due to unusual oscillations in the generalized eigenfunctions
%\begin{align*}
%u_{\pm} (x, \lambda) =\frac{1}{(\l^2-V(x))^{\frac{1}{4}}} \left(\tilde{a}_{\pm, +} (x, \lambda) e^{iy(x,\l)} + \tilde{a}_{\pm, -} (x, \lambda) e^{-iy(x,\l)}\right)
%\end{align*}  
%of $-\partial^2 _{x} u_{\pm} (x, \lambda) +V(x)u_{\pm} (x, \lambda) = \lambda^2 u_{\pm} (x, \lambda)$ studied in \cite[\S 3]{HT}. %if $-V(x) \gtrsim \jap{x}^{-\mu}$ and (\ref{26641742}) are satisfied \cite[Theorem 3.11]{HT}. 
We further have to take into account the modifier $J$ since we consider not $\tilde{W}_{\pm}$ but modified wave operators $W_{\pm}$. To overcome this difficulty, instead, we employ the formula
\begin{align*}
W_{\pm} = \int_{0}^{\infty} W_{\pm} (\l) \mathcal{F}_0(\l) d\l,
\end{align*}
where $W_{\pm} (\l)$ are wave matrices\footnote{These operators are also called the Poisson operators in geometric scattering theory and the Fourier extension operators in harmonic analysis. Moreover, their inverses are called the distorted Fourier transforms or the generalized Fourier transforms in the traditional scattering theory.} of $P$ and $\mathcal{F}_{0} (\l)$ is the Fourier restriction operator. Since $\mathcal{F}_{0} (\l)$ are simple explicit operators and $W_{\pm} (\l)$ can be expressed in terms of $u_{\pm}$, we obtain a representation formula of the integral kernel of $W_{+}$ as
\begin{align*}
W_{+} (x, x') = \sum_{\sigma_1, \sigma_2 \in \{\pm\}} \int_{0}^{\infty} b_{\sigma_1,\s_2} (x, \l) e^{i\sigma_2 y(x, \l) -i\sigma_1 \l x'} d\l =: \sum_{\sigma_1, \sigma_2\in \{\pm\}} W_{+,\s_1,\s_2} (x, x'),
\end{align*}
where $b_{\s_1,\s_2}$ are products of $\tilde{a}_{\pm, \pm}$ and explicit functions and satisfy $|\pa_{\l}^{\a}b_{\s_1,\s_2}(x,\l)|\lesssim |\l|^{-|\a|}$ for $\a=0,1,2$. See the proof of Propositions \ref{prop:waveexpression1} and \ref{prop:waveexpression2}. 

To study the behavior of the modified wave operator $W_{+}$ at the middle and high energies for Theorem \ref{26661518}, we observe that in the regime $\l > \l_0$, the phase function satisfies
\begin{align*}
y(x, \lambda) = \lambda \int_{0}^{x} \left(1-\frac{V(s)}{\lambda^2}\right)^{\frac{1}{2}} ds = x\l - M_{\DD} (x)\l^{-1} + R(x, \lambda),
\end{align*}
where we recall $M_{\DD}(x)=\frac{1}{2}\int_0^xV(s)ds$ and the term $R$ satisfies $|\partial^{\alpha} _{\lambda} R (x, \lambda)| \lesssim \frac{1}{\lambda^{3+\alpha}} \int_{0}^{x} |V(s)|^{2} ds$
for any $\alpha \in \N_0$. Then,

\noindent$(1)$ We can regard $R$ as a remainder of the principal term $x\l-M_{\DD}(x)\l^{-1}$, which is easy at least when $V\in L^2(\R)$ or $M_{\DD}\in L^{\infty}(\R)$ (see Lemma \ref{26542109}), and is possible even when $\m>0$ and $M_{\DD}\notin L^{\infty}(\R)$ along an appropriate sequence $x_n$ with $|x_n|\to \infty$ and $|M_{\DD}(x_n)|\to \infty$, see Lemma \ref{2658032}.

\noindent$(2)$ Looking at the principal term $x\l-M_{\DD}(x)\l^{-1}$, we can also regard the second term $M_{\DD}(x)\l^{-1}$ as a remainder when $M_{\DD}\in L^{\infty}(\R)$. In this case, $W_+$ is a sum of oscillatory integral operators with the phase functions $\s_2\l x-\s_1\l x'$ for $\s_1,\s_2\in \{\pm\}$, whose $L^p$-boundedness can be studied by the theory of Calder\'on-Zygmund operators at least when $1<p<\infty$. To deal with the end point cases $p=1$ and $p=\infty$, we need an additional argument, see the proof of Theorem \ref{264112215} and Remark \ref{rem:CalZyg}.

\noindent$(3)$ When $M_{\DD}\notin L^{\infty}(\R)$, we have to take the term $M_{\DD}(x)\l^{-1}$ into account more carefully. Considering the case $(\s_1,\s_2)=(+,+)$, the phase function $\l x-\l^{-1}M_{\DD}(x)-\l x'$ has a non-degenerate critical point $\l_{x,x'}=\sqrt{-\frac{M_{\DD}(x)}{x-x'}}$ when $x-x'\sim -M_{\DD}(x)$ holds, and the stationary phase theorem shows that $W_{+,+.+}(x,x')$ has an expression
\begin{align*}
W_{+,+.+}(x,x')=|x-x'|^{-\frac{1}{2}}\tilde{b}(x,x')e^{i|x-x'|\Phi(x,x')} +O(|x-x'|^{-1}),
\end{align*}
where $\tilde{b}(x,x')=b_{+,+}(x,\l_{x,x'})$ and $\Phi(x,x')=|x-x'|^{-1}(y(x,\l_{x,x'})-\l_{x,x'}x')$. Now we recall the characterization of the operator norms in $L^1$ and $L^{\infty}$:
\begin{align*}
\|T\|_{L^1(\R)\to L^1(\R)}=\sup_{x'\in \R}\int_{\R}|T(x,x')|dx,\quad \|T\|_{L^{\infty}\to L^{\infty}}=\sup_{x\in \R}\int_{\R}|T(x,x')|dx'.
\end{align*}
From the above expression, they blow up for $T(x,x')=W_{+,+.+}(x,x')1_{(x-x')\sim -M_{\DD}(x)}$ and $W_{+,+,+}$ is unbounded on $L^1$ and $L^{\infty}$. To deal with the unboundedness for $1<p<\infty$, we observe that the suprema in the operator norms $\|T\|_{L^1\to L^1}$ and $\|T\|_{L^{\infty}\to L^{\infty}}$ are expected to be attained by a sequence of $L^1$-functions converging to a delta function and $u(x')=\sgn\overline{T(x_0,x')}$ for some $x_0\in \mathbb{R}$, respectively. Renormalizing these sequences in $L^p$ sense, we can construct an appropriate sequence $u_n \in L^p(\R)$ such that $\|W_{+, +, +} \psi (P_0) u_n\|_{L^p (\R)} / \|u_n\|_{L^p (\R)} \to \infty$. For a detail, see the proof of Theorems \ref{264131227} and \ref{264221222}. We also have a similar expression for $W_-$. We also note that each $W_{+,\s_1,\s_2}$ have quite different oscillatory behavior and the sequence $u_n$ can be chosen so that $\|W_{+, +, -} \psi (P_0) u_n\|_{L^p (\R)} / \|u_n\|_{L^p (\R)} $ and $\|W_{+, -, \s_2} \psi (P_0) u_n\|_{L^p (\R)} / \|u_n\|_{L^p (\R)}$ remain bounded in $n$.

%Now, there happens a problem that the second term $M_{\DD} (x)\l^{-1}$ cannot control the remainder term $R$ uniformly in $x \in \R$ due to the possibility of oscillation. Nevertheless, to overcome this,  we find a sequence $\{x_n\} \subset \R$ such that $|x_n| \to \infty$, $|M_{\DD} (x_n)| \to \infty$ and $|M_{\DD} (x_n)|^{-1} |\int_{0}^{x_n} V(s)^2 ds| \to 0$, and around such $x_n$, we construct a profile $\{u_n\}$ in the same spirit as above.
% what makes the situation difficult is the complicated behavior of the phase function. 

On the other hand, to study the behavior in the low energy regime under the additional assumption $-V(x)\gtrsim \jap{x}^{-\mu}$, we employ Taylor's expansion of the phase function $y(x,\l)$ at $\l=0$:
\begin{align*}
y(x,\l)=M_0(x)+\frac{\l^2}{2}M_1(x)+\mathcal{R}_{x}(\l),\quad M_j(x):=\int_0^x|V(s)|^{-\frac{2j-1}{2}}ds,
\end{align*}
where $\mathcal{R}_{x}(\l)$ can be regarded as a remainder term in the regime $|x|\to \infty$ and $\l\to 0$ while $\l\ll \jap{x}^{-\m/2}$.
To prove Theorem \ref{2665014} for $p \in [1, 2)$, we take a sequence of rapidly decreasing functions $\{u_n\}$ whose supports escape to infinity and Fourier supports concentrate near $0$. We can show that $\|W_{+, +, +} \psi (P_0) u_n\|_{L^p (\R)} / \|u_n\|_{L^p (\R)} \to \infty$ by considering the region where $M_1(x)\l^2\ll 1$ though other terms are bounded. 
On the other hand, if $p \in (2, \infty]$, we need to construct a completely different sequence which cancels the oscillation in $W_{+, +, +} (x, x')$ successfully, and leads to $\|W_{+, +, +} \psi (P_0) u_n\|_{L^p (\R)} / \|u_n\|_{L^p (\R)} \to \infty$. In this case, we have to choose the supports of $\{u_n\}$ carefully, otherwise weak oscillation could survive and make $\|W_{+, +, +} \psi (P_0) u_n\|_{L^p (\R)}$ small. %Regarding negative results in Theorem \ref{2665014} $(i)$, the strategy is similar but we have to choose a sequence $\{u_n\}$ whose supports escape to infinity and Fourier supports approaches to zero, where these two escaping/accumulating speeds are not independent and have to be chosen appropriately. Thanks to the assumption $\m<2$, such a localization does not violate the uncertainty principle.

%\red{TODO改行しない？}
%
%
%\red{In higher dimensions, we employ a similar integral expression
%\begin{align*}
%W_{\pm} = J_{\pm} + \int_{0}^{\infty} R(\l \mp i0) T_{\pm} (\l) W_{0} (\l)^* d\l.
%\end{align*}
%If we multiply pseudodifferential operators $Q_{\pm}$ from left, which are microlocally supported in outgoing/incoming regions, microlocal resolvent estimates \cite{IK2, IK3, N} imply $Q_{\pm} W_{\pm}$ are essentially $Q_{\pm} J_{\pm}$ modulo regularizing operators. Therefore our analysis is reduced to the $L^p$-boundedness of (microlocalized) Isozaki--Kitada modifiers as in one dimensional cases, and the aforementioned strategy is available, though more works are needed.}

\subsection{Notations}
We write $\N =\{1, 2, \dots\}$ and $\N_0 = \N \cup \{0\}$. The Dirac delta function and the Fourier transform are denoted by $\delta$ and $\mathcal{F}$ respectively. For $r \in \R$, we set $\R_{\ge r} :=\{x \in \R \mid x \ge r\}$ and $\R_{\le r}:= \{x \in \R \mid x \le r\}$. For $S \subset \R^d$, $1_{S}$ denotes the indicator function of $S$. Moreover, we write $A\lesssim B$ if there is a non-essential constant $C>0$ such that $A\leq CB$ and $A\sim B$ if both $A\lesssim B$ and $B\lesssim A$ hold. 

\subsection*{Acknowledgment}
AH is partially supported by FoPM, WINGS Program, the University of Tokyo and JSPS Research Fellowship for Young Scientists KAKENHI Grant Number JP25KJ0736. KT is partially supported by JSPS KAKENHI Grant Number JP23K13004. He is grateful to Tomoya Kato for fruitful discussions. The proof of Lemma \ref{26542109} was provided by ChatGPT-5 and verified by the authors.

\section{Modified wave operators and their oscillatory integral representations}

We assume that $V$ satisfies \eqref{26641742} in the rest of this paper and $\l_0>0$ is a constant such that \eqref{eq:lam0def} holds until the end of Section \ref{Section:mid}.

\subsection{Isozaki--Kitada modifiers and modified wave operators}\label{subsection:IK}

We recall modified wave operators constructed by Isozaki--Kitada \cite{IK}. In one-dimension, we can use explicit solutions of the eikonal equation as phase functions.

Let us recall $\l_0>0$ satisfy \eqref{eq:lam0def}. We define
\begin{align}\label{eq:yphidef}
y (x, \xi) :=\int_{0}^{x} \sqrt{\xi^2 -V(s)} ds,\quad \phi_{\pm}(x,\x):=\left\{\begin{aligned}
&\pm y (x, \xi) &&(\x>0)\\
&\mp y (x, \xi) &&(\x<0),
\end{aligned}\right.
\end{align}
which are well-defined and smooth as long as $|\x|\geq \l_0$. Then, the phase functions $\phi_{\pm}$ satisfy the eikonal equation $|\pa_x\phi_{\pm}(x,\x)|^2+V(x)=|\x|^2$, and $ \phi_{\pm}(x,\x)=x\x+o(|x|)$ whenever $|x|\to \infty$ and $\sgn(x\x)=\pm 1$. Let $\overline{\chi}\in C^{\infty}(\R;[0,1])$ be such that $\overline{\chi}(x)=1$ for $x\geq \sqrt{2}$ and $\overline{\chi}(x)=0$ for $x\leq 1$. We define $a_{\pm}(x,\x):=\overline{\chi}(\pm x/R_0)\overline{\chi}(\x/\l_0)+\overline{\chi}(\mp x/R_0)\overline{\chi}(-\x/\l_0)$ for a fixed $R_0\gg 1$ and the Isozaki--Kitada modifiers by 
\begin{align}\label{eq:Jpmdef}
J_{\pm}f(x) := \frac{1}{2\pi}\int_{\R^{2}} e^{i(\phi_{\pm} (x, \xi) -y\xi)} a_{\pm}(x, \xi) f(y) dyd\xi,
\end{align} 
which are bounded on $L^2(\R)$ by \cite[Lemma 3.3]{IK}. By the standard argument as in \cite{IK}, the limits $W_{\pm}$ in \eqref{eq:IKmodwave} exist and are unitary operators from $\mathrm{Ran}~ E_{P_0}([2\l_0^2,\infty))$ to $\mathrm{Ran}~ E_{P}([2\l_0^2,\infty))$, where $E_{P_0}$ and $E_P$ are the spectral projections. These ranges of $W_{\pm}$ are the absolutely continuous subspace of $P$ and $W_{\pm}$ have the intertwining property $PW_{\pm}=W_{\pm}P_0$. The operators $W_{\pm}$ are called \textbf{modified wave operators}.

\begin{remark}
The modified wave operators $W_{\pm}$ are independent of the choice of $R_0$ thanks to the local compactness of $P_0$.
\end{remark}

\subsection{Relationship to other wave operators}\label{eq:sevwavops1}
The results in this subsection are not needed for the proof of Theorems \ref{26661518} and \ref{26662136}.
Here, we discuss the relationship among the Isozaki--Kitada modified wave operators, the usual wave operators, Dollard's modified wave operators, Yafaev's modified wave operators, and H\"ormander's modified wave operators following \cite[\S 6.4 and \S6.5]{DS}, \cite[\S4.7 and \S 4.9]{DG} and \cite[\S 10.2]{Y}. The main difference from the setting in \cite{DS} is that they assume spherical symmetry of potentials (which corresponds to the evenness in the one-dimensional case). The resulting formulas \eqref{eq:modwaveusualwave} and \eqref{eq:modwaveDollard} relating these wave operators are a bit more complicated than those in \cite{DS}.

Let us recall that $\tilde{W}_{\pm}$ are the usual wave operators \eqref{2665025}, which can be defined when $\m>1$ and $W_{\DD,\pm}$ are Dollard's modified wave operators
\begin{align*}
W_{\DD,\pm}=\operatorname*{s-lim}_{t \to \pm \infty}e^{itP} e^{-itP_0}e^{-\frac{i}{2}\int_0^tV(sD_x)ds},
\end{align*}
which are defined when $\m>1/2$. Concerning Yafaev's modified wave operators, we use solutions $\Xi_{\pm}$ to the Hamilton--Jacobi equation (in the configuration space)
\begin{align}\label{eq:HamJac1}
\pa_t \Xi_{\pm} (t, x) + |\pa_x \Xi_{\pm} (t, x)|^2 +V(x) =0, \quad \epsilon \le \left|\frac{x}{2t}\right| \le \frac{1}{\epsilon}, \quad \pm t \ge T_{\epsilon},
\end{align}
which are constructed in the proof of Lemma \ref{lem:waveL^pequiv} $(iii)$ below for any $\epsilon >0$ and sufficiently large $T_{\epsilon} >0$, to define the modified free dynamics
\begin{align*}
(\mathcal{U}_{0, \pm} (t) f) (x)= e^{i\Xi_{\pm} (t, x)} (2it)^{-\frac{1}{2}} \hat{f} \left(\frac{x}{2t}\right)
\end{align*}
for any $\hat{f} \in C^{\infty} _c (\R \setminus \{0\})$. Now Yafaev's modified wave operators\footnote{In \cite[\S 1.5]{Y}, modified wave operators are introduced using approximate solutions to the Hamilton--Jacobi equation. As long as $\mu > 1/2$, they are equal to our Isozaki--Kitada modified wave operators $W_{\pm}$, which can be checked mimicking the proof of \cite[Theorem 10.2.6]{Y}. However, if $\mu \le 1/2$, they do not coincide in general because their appears non-vanishing error terms. It seems that if we employ Isozaki--Kitada modifiers with a phase function approximating the eikonal equation, which is constructed in \cite{T1}, they coincide for any $\mu >0$.} are defined for any $\mu >0$ by
\begin{align*}
W_{\mathrm{Y}, \pm} = \operatorname*{s-lim}_{t \to \pm \infty}e^{itP}\mathcal{U}_{0, \pm} (t) \bar{\chi} (|D_x /\l_0|),
\end{align*}
where $\overline{\chi}$ is introduced in Subsection \ref{subsection:IK}.
Finally, regarding H\"ormander's modified wave operators, we employ solutions $S_{\pm}$ to the Hamilton--Jacobi equation (in the momentum space)%\footnote{Strictly speaking, the Hamilton--Jacobi equation in \cite{DG} is $\pa_t S(t, \xi) = \frac{\xi^2}{2} + V(\pa_{\xi} S(t, \xi))$ since they consider Hamiltonians $-\frac{1}{2}\pa^2 _x + V$ instead of $-\pa^2 _x +V$.} 
\begin{align}\label{eq:HamJac2}
\pa_t S_{\pm} (t, \xi) = \xi^2 + V(\pa_{\xi} S_{\pm}(t, \xi)), \quad \epsilon \le |\xi| \le \frac{1}{\epsilon}, \quad \pm t \ge T_{\epsilon},
\end{align}
which are introduced in the proof of Lemma \ref{lem:waveL^pequiv} $(iv)$ below for any $\epsilon >0$ and sufficiently large $T_{\epsilon} >0$. Using these functions $S_{\pm}$, %\cite[Theorem 4.7.1 and Proposition 4.7.4]{DG} (see also \cite[Vol. IV]{Hor}), 
H\"ormander's modified wave operators
\begin{align*}
W_{\mathrm{H}, \pm} = \operatorname*{s-lim}_{t \to \pm \infty}e^{itP} e^{-iS_{\pm}(t, D_x)} \bar{\chi} (|D_x /\l_0|)
\end{align*}
are defined for any $\mu >0$.

\begin{lemma}\label{lem:waveL^pequiv}
Let $\chi\in C^{\infty}(\R)$ satisfy $\chi(\x)=1$ for $|\x|\gg 1$ and $\supp \chi\subset (\l_0,\infty)$. Then for any $p \in [1, \infty]$,
%Let $\eta \in C^{\infty} _c(\R)$ be such that $\supp \eta\subset (\l_0,\infty)$.  

\noindent$(i)$ The $L^p$-boundedness of $W_{\pm}\chi(D_x)$ and $\tilde{W}_{\pm}\chi(D_x)$ is equivalent when $\m>1$.

\noindent$(ii)$ The $L^p$-boundedness of $W_{\pm}\chi(D_x)$ and $W_{\DD,\pm}\chi(D_x)$ is equivalent when $\m>1/2$.

\noindent$(iii)$ The $L^p$-boundedness of $W_{\pm}\chi(D_x)$ and $W_{\mathrm{Y},\pm}\chi(D_x)$ is equivalent when $\m> 0$.

\noindent$(iv)$ The $L^p$-boundedness of $W_{\pm}\chi(D_x)$ and $W_{\mathrm{H},\pm}\chi(D_x)$ is equivalent when $\m> 0$.

\end{lemma}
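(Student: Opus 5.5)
The plan is to show that, for $\chi(D_x)$ supported in $(\l_0,\infty)$, each pair of wave operators differs by a Fourier multiplier (or a product of such multipliers) that is bounded on $L^p$ together with its inverse. The equivalence of $L^p$-boundedness then follows at once.

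We compare both operators with the usual ones through the chain rule for strong limits. Put $\omega(\xi)$ for a phase depending only on $\xi$. If $e^{itP}J_{\pm}e^{-itP_0}$ and $e^{itP}\mathcal{U}(t)$ both converge on $\mathrm{Ran}\,\chi(D_x)$, and
\[
J_{\pm}e^{-itP_0}-\mathcal{U}(t)e^{-i\omega(D_x)}\to 0
\]
strongly, then the other wave operator equals $W_{\pm}e^{i\omega(D_x)}$ on that range.

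Concretely, I would use the eikonal phase $\phi_\pm$, for which $\phi_\pm(x,\xi)-x\xi$ has explicit asymptotics as $x\to\pm\infty$ along the classical trajectory $x\approx 2t\xi$.

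\noindent\textbf{Case (i).} Here $\mu>1$. Then $\phi_{+}(x,\xi)-x\xi\to\pm\theta_\pm(\xi)$ with
\[
\theta_\pm(\xi)=\int_0^{\pm\infty}\bigl(\sqrt{\xi^2-V}-\xi\bigr)\,ds,
\]
where both limits are finite because $V\in L^1$. A stationary phase argument applied to $J_\pm e^{-itP_0}\chi(D_x)f$ (concentrated in $\pm x\gtrsim |t|$, where $a_\pm=1$) gives
\[
W_\pm\chi(D_x)=\tilde W_\pm\, e^{i\Theta_\pm(D_x)}\chi(D_x),
\]
with $\Theta_\pm(\xi)=\theta_{\sgn(\pm\xi)}(\xi)$, up to signs.

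\noindent\textbf{Cases (ii)--(iv).} For Dollard's operator the modifier $\tfrac12\int_0^t V(2s\xi)\,ds$ is compared to $\phi_\pm(2t\xi,\xi)-2t\xi^2$ via the expansion
\[
y=x\l-M_{\DD}(x)\l^{-1}+R .
\]
This gives a phase $\omega(\xi)$ given by a convergent integral, since the remainder $R$ involves $\int V^2$, which is finite for $\mu>1/2$. For Yafaev and H\"ormander I would construct $\Xi_\pm$ and $S_\pm$ by the method of characteristics, following Derezi\'nski--G\'erard and Yafaev. The key observation is that in one dimension $\phi_\pm(x,\xi)-t\xi^2$ solves the Hamilton--Jacobi equation after a Legendre transform. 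Consequently $\Xi_\pm(t,x)$ and the phase of $J_\pm e^{-itP_0}$ at $x=2t\xi$ differ by a function of $\xi$ plus $o(1)$, and the same holds for $S_\pm$.

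\noindent\textbf{Main obstacle: $L^p$-boundedness of the multipliers.} The crucial step is showing that the multipliers $e^{\pm i\omega(D_x)}\tilde\chi(D_x)$ are bounded on $L^p$ for all $p\in[1,\infty]$, including the endpoints. For $p=1,\infty$ this requires the kernel $\mathcal{F}^{-1}\bigl(e^{i\omega}\tilde\chi\bigr)$ to lie in $L^1$.

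Since $\tilde\chi=1$ near infinity, I would write
\[
e^{i\omega}\tilde\chi=\tilde\chi+\tilde\chi\bigl(e^{i\omega}-1\bigr)
\]
and prove symbol bounds $|\pa_\xi^k\omega(\xi)|\lesssim\jap{\xi}^{-1-k}$ for $k\le 2$. For instance, $\theta(\xi)=O(\xi^{-1})$, since $\sqrt{\xi^2-V}-\xi\sim -V/(2\xi)$ is integrable. These bounds put $e^{i\omega}-1$ in $H^1(\R)$ together with its derivative, so its inverse Fourier transform lies in $L^1$. The term $\tilde\chi(D_x)$ itself is handled by noting that $1-\tilde\chi$ is compactly supported and smooth. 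Inverses are treated identically.

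Since $\chi=\chi\tilde\chi$ for a suitable $\tilde\chi$, and $\chi(D_x)$ commutes with the multiplier, boundedness of one composite gives boundedness of the other. Verifying the derivative bounds on $\omega$ uniformly, especially for $\mu\le 1/2$ in cases (iii) and (iv), is the delicate step. There I would use the approximate Hamilton--Jacobi solutions with error terms that are integrable in $t$.
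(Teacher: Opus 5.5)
Your parts (i) and (ii) follow the paper's route. The relative phase is the tail of $\phi_\pm(x,\xi)-x\xi$, namely $A_\pm(\xi)=(\sgn\xi)\int_0^{\pm\infty}(\sqrt{\xi^2-V(s)}-|\xi|)\,ds$, or its Dollard analogue $B_\pm$. Note it must be $|\xi|$ and not $\xi$, otherwise your $\theta_-$ diverges for $\xi<0$. Your endpoint argument, that the symbol bounds put $\tilde\chi(e^{i\omega}-1)$ in $H^1(\R)$ and hence give an $L^1$ kernel, is a valid and cleaner substitute for the symbol lemma the paper cites.

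The gap is in (iii) and (iv). You build $\Xi_\pm$ and $S_\pm$ by characteristics "following Derezi\'nski--G\'erard/Yafaev". This produces a relative multiplier $e^{i\omega(D_x)}$, and you leave the bounds $|\partial_\xi^k\omega|\lesssim\langle\xi\rangle^{-1-k}$ unproved as the "delicate step". Nothing in the proposal controls $\omega$. Two exact solutions of the Hamilton--Jacobi equation differ near $x\approx 2t\xi$ by a function of $\xi$ that depends on how the characteristic family is normalized, and that function need not be a decaying symbol. The paper remarks that the Derezi\'nski--G\'erard choice yields a multiplier with an oscillating symbol, which obstructs exactly the high-energy $L^p$ bound. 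Your fallback to approximate Hamilton--Jacobi solutions for $\mu\le 1/2$ only introduces another uncontrolled phase. For $\mu\le1/2$, operators built from approximate phases need not even coincide with the Isozaki--Kitada ones. Finally, in the statement $W_{\mathrm{Y},\pm}$ and $W_{\mathrm{H},\pm}$ are defined with specific solutions $\Xi_\pm,S_\pm$, so you are not free to pick them.

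The missing step is the Legendre-transform observation you make in passing but never use. Take $\Xi_\pm(t,x)$ to be the critical value of $\xi\mapsto\phi_\pm(x,\xi)-t\xi^2$. The critical point $\xi(t,x)=\frac{x}{2t}+\frac{1}{2t}\partial_\xi(\phi_\pm-x\xi)(x,\xi(t,x))$ exists for $t\ge T_\epsilon$ by a contraction argument. The identities $\partial_t\Xi_\pm=-\xi(t,x)^2$ and $\partial_x\Xi_\pm=\partial_x\phi_\pm(x,\xi(t,x))$ turn the eikonal equation into the Hamilton--Jacobi equation. Likewise take $S_\pm(t,\xi)$ to be the critical value of $x\mapsto x\xi-\Xi_\pm(t,x)$; this is how the paper defines both. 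Stationary phase in $J_\pm e^{-itP_0}f(x)=(2\pi)^{-1/2}\int e^{i(\phi_\pm(x,\xi)-t\xi^2)}a_\pm(x,\xi)\hat f(\xi)\,d\xi$ then gives $e^{i\Xi_\pm(t,x)}(2it)^{-1/2}\bar\chi(|x/(2t\lambda_0)|)\hat f(x/2t)+o_{L^2}(1)$, that is, $\mathcal{U}_{0,\pm}(t)\bar\chi(|D_x/\lambda_0|)f+o_{L^2}(1)$. The estimate $|\xi(t,x)-x/2t|\lesssim t^{-\mu}$ enters only through the amplitude. A second stationary phase on the Fourier side gives $e^{-iS_\pm(t,\xi)}\hat f(\xi)+o_{L^2}(1)$. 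Hence $\omega\equiv 0$, and $W_\pm\chi(D_x)=W_{\mathrm{Y},\pm}\chi(D_x)=W_{\mathrm{H},\pm}\chi(D_x)$ exactly for every $\mu>0$. No multiplier estimate is needed.
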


\begin{proof}
Let $\g\in C^{\infty}(\R)$ satisfy $\g(\x)=1$ for $|\x|\gg 1$, $\supp \g\subset \{ \xi \in \R \mid |\xi| \in (\l_0,\infty)\}$, and $\g\chi=\chi$.
We write $\g_+= \g 1_{\R_{\geq 0}}$ and $\g_-=\g 1_{\R_{\leq 0}}$, which are smooth and satisfy $\g=\g_++\g_-$.

\noindent$(i)$ We only deal with the $+$ case and the other case is similarly handled.
We define $A_{\pm}(\x)=(\sgn \x)\int_0^{\pm\infty}(\sqrt{|\x|^2-V(s)}-|\x|)ds$ and $R_{\pm}(x,\x):=-(\sgn \x)\int_x^{\pm\infty}(\sqrt{\x^2-V(s)}-|\x|)ds$ for $\x^2\geq \l_0^2$ so that
\begin{align*}
\phi_+(x,\x)-x\x=(\sgn \x)\int_0^x(\sqrt{\x^2-V(s)}-|\x|)ds=A_{\pm}(\x)+R_{\pm}(x,\x).
\end{align*}
Then, 
\begin{align}\label{eq:wavecomparisionAR}
|\pa_x^{\a}\pa_{\x}^{\b}(e^{iR_{\pm}(x,\x)}-1)|\lesssim \jap{x}^{1-\m-|\a|}\jap{\x}^{-1-|\a|}\quad \text{for $(x,\x)\in \supp a_{+}\cap \{\pm \x>0\}$}.
\end{align}

We will show
\begin{align}\label{eq:modwaveusualwave}
W_{+}\g_{\pm}(D_x)=\tilde{W}_{+}\g_{\pm}(D_x)e^{iA_{\pm}(D_x)}.
\end{align}
It suffices to prove $\lim_{t\to \infty}(J_+e^{-iA_{\pm}(D_x)}e^{-itH_0}\g_{\pm}(D_x)f-e^{-itH_0}\g_{\pm}(D_x)f) =0$ for $f\in L^2(\R)$ satisfying $\hat{f}\in C_c^{\infty}(\R\setminus \{0\})$.
Due to the assumption $\m>1$, $J_{+}e^{-iA_{\pm}(D_x)}\g_{\pm}(D_x)$ are pseudodifferential operators with the symbols $e^{iR_{\pm}(x,\x)}a_+(x,\x)\g_{\pm}(\x)$ and
\begin{align*}
J_+e^{-iA_{\pm}(D_x)}e^{-itH_0}\g_{\pm}(D_x)f=a_+(x,D_x)\g_{\pm}(D_x)e^{-itH_0}f+((e^{iR_{\pm}}-1)a_+)(x,D_x)\g_{\pm}(D_x)e^{-itH_0}f.
\end{align*}
The velocity estimate \cite[Lemma 2.24]{I} and \eqref{eq:wavecomparisionAR} with $\m>1$ show that the second term vanishes as $t\to \infty$. Since $a_+$ is an outgoing cut-off function, \cite[(2.60)]{I} yields $a_+(x,D_x)\g_{\pm}(D_x)e^{-itH_0}f-e^{-itH_0}\g_{\pm}(D_x)f\to 0$ as $t\to \infty$. In this way, the identity \eqref{eq:modwaveusualwave} is proved.

When $1<p<\infty$, the identity \eqref{eq:modwaveusualwave} and the H\"ormander-Mikhlin theorem \cite[Theorem 6.2.7]{G1} prove the equivalence of the $L^p$-boundedness of $W_+$ and $\tilde{W}_+$. To deal with the endpoint cases $p=1,\infty$, we need additional arguments.
By \eqref{eq:modwaveusualwave}, $\g=\g_++\g_-$, $\supp\g_+\cap \supp\g_-=\emptyset$, and $\g\chi=\chi$, we obtain
\begin{align*}
W_{+}\chi(D_x)=&\tilde{W}_{+}\chi(D_x)\left(\g(D_x)+(e^{iA_{-}(D_x)}-e^{iA_{+}(D_x)})\g_-(D_x) e^{-iA_+ (D_x)}\right)e^{iA_+(D_x)},\\
\tilde{W}_{+}\chi(D_x)=&W_{+}\chi(D_x)\left(\g(D_x)+(e^{-iA_{-}(D_x)}-e^{-iA_{+}(D_x)})\g_-(D_x) e^{iA_+ (D_x)}\right)e^{-iA_+(D_x)}.
\end{align*}
By the definition of $A_{\pm}$, for $\s_1,\s_2\in \{\pm\}$, we have $|\pa_{\x}^{\a}(e^{\s_1iA_{\s_2}(\x)}-1)|\lesssim \jap{\x}^{-1-|\a|}$ uniformly in $\x^2\geq \l_0^2$. Moreover, $\g(\x)-1\in C_c^{\infty}(\R)$. Then,
\begin{align*}
\g(D_x)(e^{\pm iA_{+}(D_x)}-I),\quad (e^{\pm iA_{-}(D_x)}-e^{\pm iA_{+}(D_x)})\g_-(D_x)e^{\mp iA_+(D_x)},\quad \g(D_x)-I
\end{align*}
are bounded on $L^p$ by \cite[\S7, Lemma 5.2]{Tay}. Moreover, since the identity operator is bounded on $L^p$, we find that $\g(D_x)=I+(\g(D_x)-I)$ is also bounded on $L^p$. Thus,
\begin{align*}
\left(\g(D_x)+(e^{\pm iA_{-}(D_x)}-e^{\pm iA_{+}(D_x)})\g_-(D_x) e^{\mp iA_+(D_x)}\right)e^{\pm iA_+(D_x)}
\end{align*}
are bounded on $L^p$. Therefore, the $L^p$-boundedness of $W_{+}\chi(D_x)$ and $\tilde{W}_{+}\chi(D_x)$ is equivalent.

\noindent$(ii)$ We define $\phi_{\DD,+}(x,\x)=\phi_+(x,\x)-x\x-2^{-1}\x^{-1}M_{\DD}(x)$, 
\begin{align*}
B_{\pm}(\x)=(\sgn \x)\int_0^{\pm\infty}\left(\sqrt{|\x|^2-V(s)}-|\x|-2^{-1}|\x|^{-1}V(s)\right)ds
\end{align*}
 and $E_{\pm}(x,\x):=-(\sgn \x)\int_x^{\pm\infty}(\sqrt{\x^2-V(s)}-|\x|-2^{-1}|\x|^{-1}V(s))ds$ for $\x^2\geq \l_0^2$ so that
\begin{align*}
\phi_{\DD,+}(x,\x)=B_{\pm}(\x)+E_{\pm}(x,\x).
\end{align*}
Set $J_{\DD,+}u(x)= \frac{1}{2\pi}\int_{\R^{2}} e^{i(\phi_{\DD,\pm} (x, \xi) -y\xi)} a_{\pm}(x, \xi) f(y) dyd\x$. Similarly to the proof of \eqref{eq:wavecomparisionAR}, we have
\begin{align*}
W_{+}\g_{\pm}(D_x)=\operatorname*{s-lim}_{t \to  \infty}e^{itH}J_{\DD,+} e^{-itH_0}\g_{\pm}(D_x)e^{iB_{\pm}(D_x)}.
\end{align*}
Moreover, the stationary phase theorem\footnote{In this case, the stationary point is given by $\x=x/2t$. Around this point, $(2\x)^{-1}\int_0^xV(s)ds \cong 2^{-1}\int_{0}^tV(s\x)ds$ on $\supp a_{\pm}$ as $t\to \infty$, which is the idea of the proof for this part.} shows that $J_{\DD,+}e^{-itH_0}f=e^{-itH_0}e^{-\frac{i}{2}\int_0^tV(sD_x)ds}f +o_{L^2}(1)$ as $t\to \infty$ for $f\in L^2(\R)$ satisfying $\hat{f}\in C_c^{\infty}(\R\setminus \{0\})$ and $\operatorname*{s-lim}_{t \to \pm \infty}e^{itH}J_{\DD,+} e^{-itH_0}=W_{\DD,\pm}$. Therefore,
\begin{align}\label{eq:modwaveDollard}
W_{+}\g_{\pm}(D_x)=W_{\DD,+}\g_{\pm}(D_x)e^{iB_{\pm}(D_x)}.
\end{align}
The remaining part of the proof is entirely the same as in that of $(i)$. We omit the detail.

\noindent$(iii)$ The assertion follows from $W_{\pm}\chi(D_x) = W_{\mathrm{Y}, \pm}\chi(D_x)$, which we prove below. We only consider the upper sign. First we construct $\Xi_+$ in the definition of $W_{\mathrm{Y}, +}$. For any $\epsilon \in (0, 1)$ and $(t, x) \in (0, \infty) \times \R$ satisfying $\epsilon \le \left|\frac{x}{2t}\right| \le \frac{1}{\epsilon}$, if $t \ge T_{\epsilon}$ is sufficiently large, there exists unique $\xi (t, x) \in \R$, which enjoys
\begin{align*}
\xi (t, x) = \frac{x}{2t} + \frac{1}{2t} \pa_{\xi} \Phi_+ (x, \xi (t, x)), \quad \left|\xi (t, x) -\frac{x}{2t} \right| < \frac{\epsilon}{10},
\end{align*}
where we set $\Phi_+ (x, \xi) =\phi_+ (x, \xi) -x\xi$. This follows from a standard contraction argument. Then we set $\Xi_+ (t, x) = x\xi (t, x) +\Phi_+ (x, \xi (t, x)) -|\xi (t, x)|^2 t$, which satisfies the Hamilton--Jacobi equation since we have
\begin{align*}
&\pa_t \Xi_+ (t, x) = -|\xi (t, x)|^2, \quad \pa_x \Xi_+ (t, x) = \xi (t, x) + \pa_x \Phi_+ (x, \xi (t, x)), \\
&\pa_t \Xi_+ (t, x) + |\pa_x \Xi_+ (t, x)|^2 +V(x) =2\xi (t, x) \pa_x \Phi_+ (x, \xi (t, x)) +|\pa_x \Phi_+ (x, \xi (t, x))|^2 +V(x) =0,
\end{align*}
where we have used the definition of $\Phi_+$.

Now, for $\hat{f} \in C^{\infty} _c (\{\epsilon \le |\xi| \le 1/\epsilon\})$, we proceed to calculate
\begin{align*}
J_+ e^{-itP_0} f(x)= (2\pi)^{-\frac{1}{2}} \tilde{\chi} \left(\frac{x}{2t}\right) \int_{\R} e^{i\phi_+ (x, \xi)-it|\xi|^2} a_+ (x, \xi) \hat{f} (\xi) d\xi + o_{L^2} (1)
\end{align*}
as $t \to \infty$ by the non-stationary phase method, where $\tilde{\chi} \in C^{\infty} _c (\R \setminus \{0\})$ satisfies $\tilde{\chi} =1$ on $\{\epsilon/2 \le |\xi|\le 2/\epsilon \}$. Now the stationary phase theorem yields
\begin{align}
J_+ e^{-itP_0} f(x)&= e^{i\Xi_+ (t, x)} (2it)^{-\frac{1}{2}} a_+ (x, \xi (t, x)) \hat{f} (\xi (t, x)) + o_{L^2} (1)\notag \\
&= e^{i\Xi_+ (t, x)} (2it)^{-\frac{1}{2}} \bar{\chi} \left(\left|\frac{x}{2t\l_0}\right|\right) \hat{f} \left(\frac{x}{2t}\right) + o_{L^2} (1),\label{eq:J_+expansion}
\end{align}
where we have used $\left|\xi (t, x) - \frac{x}{2t}\right| \lesssim O (t^{-\mu})$. Therefore we obtain
\begin{align*}
W_{+} f = \operatorname*{s-lim}_{t \to \infty} e^{itP} J_+ e^{-itP_0} f= \operatorname*{s-lim}_{t \to \infty} e^{itP} \mathcal{U}_{0, +} (t)\bar{\chi} ( |D_x /\l_0|) f = W_{\mathrm{Y}, +} f,
\end{align*}
which completes the proof.

\noindent$(iv)$ 
The statement follows from $W_{\pm}\chi(D_x) = W_{\mathrm{H}, \pm}\chi(D_x)$, which we prove below. Similarly to the case $(iii)$, we consider the upper sign. First we construct $S_+$ in the definition of $W_{\mathrm{H}, +}$. For $\xi \in \R$ satisfying $\epsilon \le |\xi| \le 1/\epsilon$, $t>1$ and $z \in \R$ satisfying $\epsilon/10 \le |z| \le 10/\epsilon$, we set $\Psi_{+} (t, z, \xi) = -\frac{1}{t} (\Xi_+ (t, 2tz) -2tz\xi)$. Then we have
\begin{align*}
\pa_z \Psi_+ (t, z, \xi) = 2(\xi - (\pa_x \Xi_+) (t, 2tz)), \quad \pa^2 _z \Psi_+ (t, z, \xi) = -4t(\pa^2 _x \Xi_+) (t, 2tz).
\end{align*}
Using $|\xi (t, 2tz) -z| \lesssim t^{-\mu}$ and $|(\pa_x \xi)(t, 2tz) -\frac{1}{2t}| \lesssim t^{-(1+\mu)}$, we obtain
\begin{align*}
\pa_z \Psi_+ (t, z, \xi) =2(\xi -z + O(t^{-\mu})), \quad \pa^2 _z \Psi_+ (t, z, \xi) = -2 + O(t^{-\mu}),
\end{align*}
which implies if $t \ge T_{\epsilon}$ is suffuciently large, $\pa^2 _z \Psi_+ (t, z, \xi) <0$ and $\pa_z \Psi_+ (t, z, \xi) =0$ has a unique solution $z=z(t, \xi)$ satisfying $|z(t, \xi) -\xi| \lesssim t^{-\mu}$. Now we define $S_+ (t, \xi) = t\Psi_+ (t, z(t, \xi), \xi)$, which satisfies the Hamilton--Jacobi equation. In fact, we can use
\begin{align*}
\pa_t S_{+} (t, \xi) = -(\pa_t \Xi_+) (t, 2tz(t, \xi)), \quad \pa_{\xi} S_+ (t, \xi) = 2tz (t, \xi),
\end{align*}
and the fact that $\Xi_+$ satisfies the Hamilton--Jacobi equation in the configuration space. Now, for $\hat{f} \in C^{\infty} _c (\{10\epsilon \le |\xi| \le 1/10\epsilon\})$, we proceed to calculate
\begin{align*}
\mathcal{F} [J_+ e^{-itP_0}f] (\xi) &= \mathcal{F}\left[e^{i\Xi_+ (t, \cdot)} (2it)^{-\frac{1}{2}} \bar{\chi} \left(\left|\frac{\cdot}{2t\l_0}\right|\right) \hat{f} \left(\frac{\cdot}{2t}\right)\right](\xi) + o_{L^2}(1) \\
&=2t(2it)^{-\frac{1}{2}} (2\pi)^{-\frac{1}{2}} \tilde{\chi} (\xi) \int_{\R} e^{-it\Psi_+ (t, z, \xi)} \bar{\chi} \left( \left|\frac{z}{\l_0} \right|\right)\hat{f}(z) dz + o_{L^2}(1),
\end{align*}
where we use \eqref{eq:J_+expansion} and take $\tilde{\chi} \in C^{\infty} _c (\R \setminus \{0\})$ satisfying $\supp \tilde{\chi} \subset \{4\epsilon \le |\xi| \le 1/4\epsilon\}$ and $\tilde{\chi} (\xi) =1$ on $\{6\epsilon \le |\xi| \le 1/6\epsilon\}$, after which we perform non-stationary phase argument. Now the stationary phase theorem yields
\begin{align*}
\mathcal{F} [J_+ e^{-itP_0}f] (\xi) =e^{-iS_+ (t, \xi)} \bar{\chi} \left( \left|\frac{z(t, \xi)}{\l_0} \right|\right)\hat{f}(z(t, \xi)) + o_{L^2} (1) = e^{-iS_+ (t, \xi)} \bar{\chi} \left( \left|\frac{\xi}{\l_0} \right|\right)\hat{f}(\xi)+ o_{L^2} (1),
\end{align*}
which implies
\begin{align*}
W_{+}f=\operatorname*{s-lim}_{t \to \infty} e^{itP} J_+ e^{-itP_0} f= \operatorname*{s-lim}_{t \to \infty} e^{itP}e^{-iS_+ (t, D_x)} \bar{\chi} (|D_x /\l_0|)f = W_{\mathrm{H}, +} f,
\end{align*}
and the proof is colmpleted\footnote{In \cite[\S 4]{DG}, a similar relation is proved though they seem to employ different solutions to the eikonal and the Hamilton--Jacobi equations. If we use H\"ormander's modified wave operators in \cite{DG}, there will appear a Fourier multiplier with an oscillating symbol in the relation to our Isozaki--Kitada modifiers, which causes a difficulty in the high energy regime.}.
%The statement follows from $W_{\pm}\eta(D_x)e^{iC_{\pm}(D_x)} = W_{\mathrm{H}, \pm}\eta(D_x)$, where $C_{\pm}$ are smooth near the support of $\eta$. This is essentially proved in \cite[Theorem 4.9.1]{DG}. Note that the phase function $\Phi^+ _{\mathrm{lr}}$ in the Isozaki--Kitada modifier defined in \cite[(4.9.4)]{DG} is not equal to our phase function $\phi_{+}$ but we have $\Phi^+ _{\mathrm{lr}} = \phi_+ + C_+$ instead due to the condition \cite[(4.9.2)]{DG}.
\end{proof}

\begin{remark}
\noindent$(1)$
If we further assume that $V(x)=V(-x)$ for all $x\in \R$, we have $A_+(\x)=A_-(\x)$ and $B_+(\x)=B_-(\x)$. Therefore, $W_+\chi(D_x)=\tilde{W}_+\chi(D_x)e^{iA_+(D_x)}$ and  $W_{+}\chi(D_x)= W_{\DD,+}\chi(D_x)e^{iB_{+}(D_x)}$.

\noindent$(2)$ Solutions of the eikonal equation $|\pa_x\phi_{\pm}(x,\x)|^2+V(x)=\x^2$ and the Hamilton--Jacobi equations \eqref{eq:HamJac1} and \eqref{eq:HamJac2} are not unique. Therefore, the Isozaki--Kitada, Yafaev's, and H\"ormander modifiers are not uniquely determined either. The proofs of $(iii)$ and $(iv)$ show that if we choose solutions of these three equations compatibly, then the corresponding three modifiers coincide.

\end{remark}

\subsection{The stationary representation of the modified wave operators and the wave matrices}
Following \cite[\S6.1, Appendix A]{DS}, we derive the stationary representation of the modified wave operators $W_{\pm}$ defined in \eqref{eq:IKmodwave}. We define the Fourier restriction/extension operators $\mathcal{F}_{0}(\l),\mathcal{F}_{0}(\l)^*$ in one dimension by
\begin{align*}
\mathcal{F}_{0}(\l)f(x)=\frac{1}{(2\pi)^{\frac{1}{2}}}\begin{pmatrix}
\int_{\mathbb{R}} e^{-i\l x}f(x)dx\\
\int_{\mathbb{R}} e^{i\l x}f(x)dx
\end{pmatrix},\quad (\mathcal{F}_0(\l)^*{\bm c})(x):=\frac{1}{(2\pi)^{\frac{1}{2}}}(e^{i\l x}c_1+e^{-i\l x}c_2)
\end{align*}
for a function $f$ on $\R$ and ${\bm c}={}^t(c_1,c_2)\in\mathbb{C}^2$. We introduce the \textbf{wave matrices} $W_{\pm}(\l)$ of $W_{\pm}$ by
\begin{align}\label{eq:wavematJpm}
W_{\pm}(\l):=J_{\pm}(\l)+iR(\l\mp i0)T_{\pm}(\l),
\end{align}
where $R(\l\mp i0):=\lim_{\e\searrow 0}(P-(\l\mp i\e)^2)^{-1}$ and
\begin{align*}
T_{\pm}:=i(PJ_{\pm}-J_{\pm}P_0),\quad J_{\pm}(\l):=J_{\pm}\mathcal{F}_{0}(\l)^*,\quad T_{\pm}(\l):=T_{\pm}\mathcal{F}_{0}(\l)^*.
\end{align*}
By a simple calculation as in \cite[(5.6)]{DS}, we have $T_{\pm}f(x)=\frac{1}{2\pi}\int_{\R^{2}} e^{i(\phi_{\pm} (x, \xi) -y\xi)} t_{\pm}(x, \xi) f(y) dyd\xi$, where
\begin{align}\label{eq:tpmdef}
t_{\pm}(x,\x):=\left(2\pa_x\phi_{\pm}(x,\x)\pa_x+\pa_x^2\phi_{\pm}(x,\x)\right)a_{\pm}(x,\x)-i\pa_x^2a_{\pm}(x,\x).
\end{align}
Since $\pa_xa_{\pm}(x,\x)$ is compactly supported in $x$, $|\pa_x\phi_{\pm}(x,\x)|\lesssim \jap{\x}$, and $|\pa_x^2\phi_{\pm}(x,\x)|\lesssim \jap{x}^{-1-\mu}$, we have $|t_{\pm}(x,\x)|\lesssim \jap{x}^{-1-\mu}\jap{\x}$, which is used in the next subsection. Mimicking the proof of \cite[Lemma 6.2]{DS} and \cite[(6.8)]{DS}, for $\g\in C_c^{\infty}((0,\infty))$, we obtain the stationary representation
\begin{align}\label{eq:wavestrep}
W_{\pm}\g(P_0)=\int_0^{\infty}\g(\l^2)W_{\pm}(\l)\mathcal{F}_0(\l)d\l.
\end{align}

\subsection{Jost solutions and their asymptotic properties}\label{subsection:Jost}
In view of \eqref{eq:wavematJpm}, the outgoing part $J_+(\l)$ of $W_{+}(\l)$ is explicitly given in \eqref{eq:Jpmdef}; however, the incoming part $iR(\l\mp i0)T_{\pm}(\l)$ is not. To give a more explicit expression for the integral kernel of $W_{\pm}$, we use the Jost functions constructed in \cite{HT} and their asymptotic properties established there. In this subsection, we summarize the required properties of the Jost functions.

Let $u_{\pm}(x,\l)$ be the Jost solutions constructed in\footnote{Although \cite[Theorem 3.11]{HT} is stated when $-V(x)\gtrsim \jap{x}^{-\mu}$ and $0<\m<2$, all the results still hold for potentials satisfying \eqref{26641742} with $\m>0$ whenever we restrict the range of the parameter $\l$ to $\l\geq \l_0$.} \cite[Theorem 3.11]{HT} for equation 
\begin{align}\label{eq:stationarySch}
(-\pa_x^2+V(x)-\l^2)u(x,\l)=0\quad (x\in \R,\,\, |\l|\geq \l_0)
\end{align}
and 
\begin{align}\label{eq:upasymp}
u_{\pm} (x, \lambda) =\frac{1}{(\l^2-V(x))^{\frac{1}{4}}} \left(\tilde{a}_{\pm, +} (x, \lambda) e^{iy(x,\l)} + \tilde{a}_{\pm, -} (x, \lambda) e^{-iy(x,\l)}\right),
\end{align}
where $y(x,\l)=\int_0^x\sqrt{\l^2-V(s)}ds$ is defined in \eqref{eq:yphidef} and $\tilde{a}_{\s_1,\s_2}$ are the functions satisfying the following properties: We define the Wronskian by
\begin{align}\label{eq:defWronskian}
\mathrm{Wr}(\lambda):= u_{+} (x, \lambda) \partial_{x} u_{-} (x, \lambda) - \partial_{x} u_{+} (x, \lambda) u_{-} (x, \lambda).
\end{align}

\begin{lemma}\label{lem:apropertyold}
The coefficients $\tilde{a}_{\s_1,\s_2}$ for $\s_1,\s_2\in \{\pm\}$ satisfy the following properties:

\noindent$(i)$ The limits $\displaystyle \tilde{a}_{\s_1,\s_2}(\pm\infty,\l):=\lim_{x\to \pm\infty}\tilde{a}_{\s_1,\s_2}(x,\l)$ exist uniformly in $|\l|\geq \l_0$ and
\begin{align*}
\tilde{a}_{+,+}(\infty,\l)=\tilde{a}_{-,-}(-\infty,\l)=1,\quad \tilde{a}_{+,-}(\infty,\l)=\tilde{a}_{-,+}(-\infty,\l)=0.
\end{align*}

\noindent$(ii)$ For $\a=0,1,2$,
\begin{align*}
|\partial^{\a} _{\lambda} \tilde{a}_{\s_1,\s_2} (x, \lambda)| \lesssim |\lambda|^{-\a}, \quad |\pa_x\tilde{a}_{\s_1,\s_2}(x,\l)|=o(1)\quad (|x|\to \infty)
\end{align*}
uniformly in $|\l|\geq \l_0$.

\noindent$(iii)$ We have $\mathrm{Wr}(\l)=-2i\tilde{a}_{+,+}(-\infty,\l)=-2i \tilde{a}_{-, -} (\infty, \lambda)$ and for $\a\in\mathbb{N}_0$,
\begin{align*}
|\mathrm{Wr}(\lambda)| \sim 1, \quad \left| \left(\frac{d}{d\lambda} \right)^{\alpha} \mathrm{Wr}(\lambda) \right| \lesssim |\lambda|^{-\alpha}
\end{align*}
uniformly in $|\l|\geq \l_0$. In particular,
\begin{align}\label{eq:apmelliptic}
\tilde{a}_{+,+}(x,\l)=\left\{\begin{aligned}
&1+o(1) &&\text{as $x\to \infty$} \\
&\frac{1}{2i}\mathrm{Wr}(\l)+o(1) &&\text{as $x\to -\infty$}
\end{aligned}\right.
,\quad \tilde{a}_{-,-}(x,\l)=\left\{\begin{aligned}
&\frac{1}{2i}\mathrm{Wr}(\l)+o(1) &&\text{as $x\to \infty$} \\
&1+o(1) &&\text{as $x\to -\infty$}
\end{aligned}\right.
\end{align}
uniformly in $|\l|\geq \l_0$.
\end{lemma}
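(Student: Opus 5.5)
The plan is to take the construction of the Jost solutions from \cite[Theorem 3.11]{HT} as given and to derive the listed properties from the structure of that construction. Only the Wronskian identity and the ellipticity $|\mathrm{Wr}(\l)|\sim 1$ need an argument of their own. By the footnote, the construction in \cite{HT} applies on $|\l|\ge \l_0$ for any $V$ satisfying \eqref{26641742}, since \eqref{eq:lam0def} keeps $\l^2-V(x)\ge \l^2-\l_0^2+(\l_0^2-|V|)>0$.

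For (i) and (ii), I recall that after the Liouville-type reduction, the coefficients $\tilde a_{\pm,\pm}$ in \eqref{eq:upasymp} solve a first-order Volterra system. In this system $\pa_x\tilde a_{\s_1,\pm}$ is a linear combination of $\tilde a_{\s_1,+}$ and $\tilde a_{\s_1,-}$, multiplied by $e^{\pm 2iy(x,\l)}$ and by the coefficient $q(x,\l)=\pa_x\log(\l^2-V(x))^{1/4}=O(\jap{x}^{-1-\mu})$, uniformly in $|\l|\ge\l_0$. The boundary conditions are $\tilde a_{+,+}\to 1$, $\tilde a_{+,-}\to0$ at $+\infty$, and the mirror conditions for $u_-$ at $-\infty$.
\begin{itemize}
\item Since $q$ is integrable, the Volterra iteration converges uniformly, and the limits in (i) exist uniformly in $\l$.
\item The bound $|\pa_x\tilde a|\lesssim |q| = o(1)$ gives the second half of (ii).
\item For the $\l$-derivatives, differentiating the integral equation produces factors $\pa_\l y(x,\l)$, which grow like $|x|/|\l|$. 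These cannot be absorbed directly. Instead, I would integrate by parts in the oscillatory factor $e^{\pm 2iy}$, using $\pa_x e^{2iy}=2i\sqrt{\l^2-V}\,e^{2iy}$. Each integration by parts trades the growth in $x$ for a derivative falling on $q$, which gains a factor $\jap{x}^{-1}$.
\end{itemize}
This step is the main obstacle. Doing it twice is exactly where up to four derivatives of $V$ enter (cf. Remark \ref{26662136}(3)). It yields $|\pa_\l^\a\tilde a|\lesssim|\l|^{-\a}$ for $\a\le2$, and I would import this estimate from \cite{HT} rather than redo it.

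For (iii), I use that $\mathrm{Wr}(\l)$ is independent of $x$, since both $u_\pm$ solve \eqref{eq:stationarySch}. I therefore evaluate it as $x\to-\infty$. Put $f=(\l^2-V)^{-1/4}$. By (i), $u_-=f\,(e^{-iy}+o(1))$ there, and $u_+=f\,(\tilde a_{+,+}e^{iy}+\tilde a_{+,-}e^{-iy})$. The pair $fe^{-iy}$, $fe^{-iy}$ has zero Wronskian, so only the $e^{iy}$ part of $u_+$ contributes. For that part,
\begin{align*}
\mathrm{Wr}(fe^{iy},fe^{-iy})=f^2\cdot(-2i\,\pa_xy)=-2i,
\end{align*}
because $f^2\pa_x y=1$. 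The remaining terms contain $\pa_x\tilde a$, $\pa_x f$ or $V'$, and all vanish as $x\to-\infty$ by (ii). Hence $\mathrm{Wr}=-2i\tilde a_{+,+}(-\infty,\l)$. The symmetric computation at $+\infty$ gives $\mathrm{Wr}=-2i\tilde a_{-,-}(\infty,\l)$.

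For the two-sided bound $|\mathrm{Wr}|\sim 1$:
\begin{itemize}
\item The upper bound follows from (ii).
\item For the lower bound, I use current conservation. Since $\l$ and $V$ are real, $\overline{u_+}$ is also a solution, and $\mathrm{Wr}(u_+,\overline{u_+})$ is constant in $x$. Computing it at $\pm\infty$ as above gives $|\tilde a_{+,+}(-\infty)|^2-|\tilde a_{+,-}(-\infty)|^2=1$. Hence $|\mathrm{Wr}|\ge 2$.
\end{itemize}
The bounds on $(d/d\l)^\a\mathrm{Wr}$ for all $\a$ are taken from \cite{HT}; for $\a\le 2$ they also follow by passing to the uniform limit in (ii). Finally, \eqref{eq:apmelliptic} is (i) restated with these limit values inserted.
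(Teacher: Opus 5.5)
Your proposal is correct. For (i), (ii) and the bounds on $(d/d\l)^\a\mathrm{Wr}$ it does what the paper does, namely cite \cite[Theorem 3.11]{HT}.

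The difference is in (iii). The paper points to the proof of \cite[Theorem 3.11 (iv)]{HT} for the identity $\mathrm{Wr}(\l)=-2i\tilde a_{+,+}(-\infty,\l)=-2i\tilde a_{-,-}(\infty,\l)$ and for $|\mathrm{Wr}|\sim 1$. You derive both by hand, and correctly:
\begin{itemize}
\item From $\mathrm{Wr}(fg,fh)=f^2\,\mathrm{Wr}(g,h)$ and $f^2\pa_xy=1$ you get $\mathrm{Wr}(u_+,u_-)=-2i(\tilde a_{+,+}\tilde a_{-,-}-\tilde a_{+,-}\tilde a_{-,+})+O(|\pa_x\tilde a|)$. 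Letting $x\to\mp\infty$ gives both identities.
\item Constancy of $\mathrm{Wr}(u_+,\overline{u_+})$ gives the flux identity $|\tilde a_{+,+}(-\infty,\l)|^2-|\tilde a_{+,-}(-\infty,\l)|^2=1$. Hence you get the explicit bound $|\mathrm{Wr}(\l)|\ge 2$.
\end{itemize}
Your route makes (iii) depend only on the limits in (i) and on $|\tilde a|\lesssim 1$, $\pa_x\tilde a=o(1)$. So it does not care how the amplitudes were built. The paper's route is shorter but rests entirely on \cite{HT}.

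A small refinement: in the gauge of \cite{HT} (variation of parameters in the Liouville variable) the displayed identity holds with no error term at every $x$. That gives the bounds on $(d/d\l)^\a\mathrm{Wr}$ for $\a\le2$ directly from (ii) at a fixed $x$. Passing to the uniform limit gives the case $\a=2$ only as a Lipschitz bound on $\mathrm{Wr}'$.

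You should drop your sketch of why (ii) holds. It matches neither \cite{HT} nor a mechanism that actually works. Take a system in $x$ with coupling $q=\pa_x\log(\l^2-V)^{1/4}\sim\jap{x}^{-1-\mu}$ and off-diagonal factors $e^{\pm2iy}$, normalized by $u'=f\,i\pa_xy\,(b_+e^{iy}-b_-e^{-iy})$. One checks exactly that $b_-=a_-+\frac{q}{2i\pa_xy}(a_+e^{2iy}+a_-)$, where $a_\pm$ are the amplitudes of \cite{HT}. Two $\l$-derivatives hitting $e^{2iy}$ produce $(\pa_\l y)^2\sim x^2$. So $\pa_\l^2 b_-$ is of size $\jap{x}^{1-\mu}$, which is unbounded for $\mu<1$ (e.g.\ $V=c\jap{x}^{-\mu}$). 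This growth sits in the boundary terms of your integrations by parts and cannot be traded away.

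\cite{HT}, recalled in the proof of Lemma~\ref{264112228}, avoids this with the Liouville transform. In the variable $y$ the oscillation is independent of $\l$. The coupling $W(y,\l)=U(x(y,\l),\l)$ is built from $V''$ and $(V')^2$ and is $O(\jap{x}^{-2-\mu}|\l|^{-4})$. That one extra power of decay is what pays for two factors of $\pa_\l y$. The fourth derivative of $V$ enters through $\pa_\l^2$ of $U(x(y,\l),\l)$, not through repeated integration by parts.

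Since you import (ii) for the amplitudes of \cite{HT}, your proof stands. The sketch just should not be offered as its justification.
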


\begin{proof}
$(i)$ and $(ii)$ are stated in \cite[Theorem 3.11 (ii)]{HT}. The estimates for $\tilde{a}_{\s_1,\s_2}(\pm\infty,\l)$ and the identity for $\mathrm{Wr}(\l)$ are given in the proof of \cite[Theorem 3.11 (iv)]{HT}. The estimates for $\mathrm{Wr}(\l)$ is stated in \cite[Theorem 3.11 (iv)]{HT}. The asymptotics \eqref{eq:apmelliptic} follow from the uniformness of the limits in $(i)$ and $|\tilde{a}_{\s_1,\s_2}(\pm\infty,\l)|\sim 1$.
\end{proof}

%Before giving a proof, we collect asymptotic properties of the amplitude in the Jost functions constructed in \cite{HT}.

The following lemma contains a refinement of \cite[(3.29), (3.31) and Theorem 3.11 $(iv)$]{HT} in the high energy regime.
%We write $w(\lambda):=w\{u_{-}, \overline{u_{+}}\} (\lambda)$ and $W(\lambda):= \mathrm{Wr} (\lambda)$ for the sake of simplicity. 

\begin{lemma}\label{264112228}
We have
\begin{align*}
|\tilde{a}_{+, +} (x, \lambda)-1|+|\tilde{a}_{-, -} (x, \lambda)-1|+|\tilde{a}_{+, -} (x, \lambda)|+|\tilde{a}_{-, -} (x, \lambda)|+|\mathrm{Wr}(\lambda)+2i| \lesssim |\lambda|^{-3}
\end{align*}
uniformly in $x\in \R$ and $|\lambda| \geq \l_0$. 

%\noindent$(i)$ We have $|\tilde{a}_{+, -} (x, \lambda)|+|\tilde{a}_{-, -} (x, \lambda)| \lesssim |\lambda|^{-3}$ uniformly in $x\in \R$ and $|\lambda| >\frac{c}{2}$.
%
%\noindent$(ii)$ We have $|\tilde{a}_{+, +} (x, \lambda)-1|+|\tilde{a}_{-, -} (x, \lambda)-1| \lesssim |\lambda|^{-3}$ uniformly in $x\in \R$ and $|\lambda| >\frac{c}{2}$.

%\noindent$(iii)$ We have $|w(\lambda)| \lesssim |\lambda|^{-3}$ uniformly in $|\lambda| > \frac{c}{2}$.
%
%\noindent$(iv)$ We have $|W(\lambda)+2i| \lesssim |\lambda|^{-3}$ uniformly in $|\lambda| > \frac{c}{2}$.
\end{lemma}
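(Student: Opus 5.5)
The plan is to go back to the Volterra integral equations that define the amplitudes $\tilde a_{\s_1,\s_2}$ in the construction of \cite[\S 3]{HT}. For $|\l|\ge \l_0$ these equations hold with $q(x,\l):=\sqrt{\l^2-V(x)}$, $y=\int_0^x q$, and the reduction is a WKB/Liouville-type one. Then I would estimate the kernel of these equations more carefully in $\l$, keeping track of the exact power of $\l$ instead of the cruder bounds that give Lemma \ref{lem:apropertyold}. I read the fourth term in the statement as $|\tilde a_{-,+}|$; the term $|\tilde a_{-,-}|$ appears to be a typo, since $\tilde a_{-,-}-1$ is already included.

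\textbf{Step 1: the first-order system and its Volterra form.} Substituting \eqref{eq:upasymp} into \eqref{eq:stationarySch} and imposing the usual gauge condition $\pa_x\tilde a_{\pm,+}e^{iy}+\pa_x\tilde a_{\pm,-}e^{-iy}=0$, one obtains a first-order system for $(\tilde a_{\pm,+},\tilde a_{\pm,-})$. Write $u=q^{-1/2}w$, so that $w$ solves $w''+q^2w=\rho w$ with $\rho:=q^{1/2}(q^{-1/2})''$. The system then has the form
\begin{align*}
\pa_x \tilde a_{\pm,+}=\frac{\rho}{2iq}\bigl(\tilde a_{\pm,+}+\tilde a_{\pm,-}e^{-2iy}\bigr),\qquad
\pa_x \tilde a_{\pm,-}=-\frac{\rho}{2iq}\bigl(\tilde a_{\pm,+}e^{2iy}+\tilde a_{\pm,-}\bigr).
\end{align*}
Integrating from $x=+\infty$ (for $u_+$) with the boundary values of Lemma \ref{lem:apropertyold} (i) gives a Volterra system with kernel bounded by $|\rho|/(2|q|)$. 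The same holds for $u_-$, integrating from $-\infty$.

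\textbf{Step 2: the key $\l$-bound on the kernel.} From $q'=-V'/(2q)$ and \eqref{eq:lam0def} we have $q\sim|\l|$ on $|\l|\ge\l_0$. This gives $|(q^{-1/2})''|\lesssim |\l|^{-3/2}\bigl(|V''||\l|^{-1}+|V'|^2|\l|^{-3}\bigr)$, hence
\begin{align*}
\frac{|\rho(x,\l)|}{|q(x,\l)|}\lesssim |\l|^{-3}|V''(x)|+|\l|^{-5}|V'(x)|^2\lesssim |\l|^{-3}\jap{x}^{-2-\mu},
\end{align*}
which is integrable in $x$ uniformly in $\l$, by \eqref{26641742}. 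This is the point where the extra decay in $\l$ comes from. Only $V'$ and $V''$ appear in the kernel, never $V$ itself, which is why the gain is $|\l|^{-3}$ rather than $|\l|^{-1}$.

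\textbf{Step 3: Volterra iteration.} A standard Gronwall/Neumann-series argument gives $\sup_x(|\tilde a_{+,+}|+|\tilde a_{+,-}|)\lesssim 1$. Feeding this back into the integral equations yields
\begin{align*}
|\tilde a_{+,+}(x,\l)-1|+|\tilde a_{+,-}(x,\l)|\lesssim \int_{\R}\frac{|\rho|}{|q|}\,ds\lesssim|\l|^{-3}
\end{align*}
uniformly in $x$, and symmetrically for $\tilde a_{-,-}-1$ and $\tilde a_{-,+}$. Finally, Lemma \ref{lem:apropertyold} (iii) gives $\mathrm{Wr}(\l)=-2i\tilde a_{+,+}(-\infty,\l)$, so $|\mathrm{Wr}(\l)+2i|=2|\tilde a_{+,+}(-\infty,\l)-1|\lesssim|\l|^{-3}$.

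\textbf{Main obstacle.} The main difficulty is to make sure that the normalization in \cite{HT}, with the Liouville transform and the chosen gauge, really produces a system whose kernel is exactly of the size $\rho/q$ above, with no leftover first-order term of size $|V'|/\l^2$. Such a term would only give $|\l|^{-2}$, or even $|\l|^{-1}$. If the amplitudes in \cite{HT} turn out to be defined with such a term, I would first transform them into the above normalization and check that the transformation changes the amplitudes only by factors $1+O(|\l|^{-3})$ and leaves their limits at $\pm\infty$ unchanged. Alternatively, I would integrate by parts once in the oscillatory terms $e^{\pm2iy}$ of the Volterra equation to gain an extra factor $|\l|^{-1}$.
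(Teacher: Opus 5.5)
Your proposal is correct and is essentially the paper's proof. The paper writes the same Volterra system in the Liouville variable $y$: the vector $\textit{\textbf{w}}_{\pm}$ solves it with potential $W(y,\l)=O(|\l|^{-4}\jap{\l^{-1}y}^{-2-\mu})$, whose $y$-integral is $O(|\l|^{-3})$. After $dy=q\,dx$ this is exactly your $x$-system with kernel $\rho/q=O(|\l|^{-3}\jap{x}^{-2-\mu})$, and the paper then reads off $\mathrm{Wr}(\l)$ from the limits of the amplitudes just as you do; your reading of the fourth term as $\tilde a_{-,+}$ is also right.

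One small correction: in the variable $x$ the equation is actually $w''-(q'/q)w'+q^2w=-\rho w$, not $w''+q^2w=\rho w$. That first-order term drops out under your gauge condition, so your amplitude system holds up to the sign of $\rho$. Since \eqref{eq:upasymp} already carries the Liouville factor $(\l^2-V)^{-1/4}$, the obstacle you anticipate does not arise.
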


\begin{proof}
We briefly recall the construction of $\tilde{a}_{\s_1,\s_2}$ form \cite[\S 3]{HT}. We define
\begin{align*}
U(x,\lambda)=\frac{V''(x)}{4(\lambda^2-V(x))^2}-\frac{5(V'(x))^2}{16(\lambda^2-V(x))^{3}} ,\quad W(y,\lambda)=U(x(y,\l),\l)
\end{align*}
as in \cite[$(3.34)$]{HT}, where $x(y,\l)$ is the inverse function of $x\mapsto y(x,\l)$. Clearly, we see that $|U(x, \lambda)| \lesssim \jap{x}^{-2-\mu} |\lambda|^{-4}$ for $|\lambda|\geq \l_0$ and hence $|W(y,\l)|\lesssim \jap{\l^{-1}y}^{-2-\mu} |\lambda|^{-4}$ by $y(x, \lambda) \sim |\lambda|x$ for $\l\geq \l_0$.
The vector valued functions  $\textit{\textbf{w}}_{\pm}(y,\l) ={}^t(w_{1, \pm}(y,\l), w_{2, \pm}(y,\l))$ satisfy $\lim_{y\to \pm\infty}\textit{\textbf{w}}_{\pm} (y, \lambda)={}^t(1,\pm i)$ and
\begin{align}\label{eq:vectorweq}
\partial_{y} \textit{\textbf{w}}_{\pm} (y, \lambda)=B(y,\l) \textit{\textbf{w}}_{\pm} (y, \lambda),\quad
B(y,\l):=-W(y,\l)\begin{pmatrix}
\sin s \cos s & \sin ^2 s \\
-\cos ^2 s & -\sin s \cos s
\end{pmatrix}
\end{align}
as is seen in \cite[Theorem 3.6 and around (3.17)]{HT}. By using \cite[Theorem 3.6]{HT} (or simply the decay of $W(y,\l)$ with respect to $y$), we find that all the limits $\textit{\textbf{w}}_{\s_2} (\s_1\infty, \lambda):=\lim_{y\to \s_1\infty}\textit{\textbf{w}}_{\s_2} (y, \lambda)$ for $\s_1,\s_2\in \{\pm\}$ exist. By \eqref{eq:vectorweq} and $|\textit{\textbf{w}}_{\pm} (y, \lambda)|\lesssim 1$, we have
\begin{align*}
|\textit{\textbf{w}}_{\s_2} (y, \lambda)-\textit{\textbf{w}}_{\s_2} (\infty, \lambda)|\lesssim\left|\int_{\infty}^yB(s,\l)\textit{\textbf{w}}_{\pm} (y, \lambda) ds\right|\lesssim |\l|^{-4}\left|\int_{\infty}^y\jap{\l^{-1}s}^{-2-\mu}ds\right|\lesssim |\l|^{-3}
\end{align*}
uniformly in $y\in \R$ and $\l\geq \l_0$. Taking $y\to -\infty$, we obtain $|\textit{\textbf{w}}_{\s_2} (-\infty, \lambda)-\textit{\textbf{w}}_{\s_2} (\infty, \lambda)|\lesssim |\l|^{-3}$ and hence
\begin{align*}
{}^t(1,i)=\textit{\textbf{w}}_{+} (\infty, \lambda)=\textit{\textbf{w}}_{+} (-\infty, \lambda)+O(|\l|^{-3}),\,\, {}^t(1,-i)=\textit{\textbf{w}}_{-} (-\infty, \lambda)=\textit{\textbf{w}}_{-} (\infty, \lambda)+O(|\l|^{-3})
\end{align*}
According to \cite[around (3.18) and Proof of Theorem 3.11 (i)]{HT}, we define
\begin{align*}
a_{\pm, +} (y, \lambda) = \frac{w_{1, \pm} (y, \lambda) -i w_{2, \pm} (y, \lambda)}{2} \quad \textrm{and} \quad a_{\pm, -} (y, \lambda) = \frac{w_{1, \pm} (y, \lambda) +i w_{2, \pm} (y, \lambda)}{2}
\end{align*}
and $\tilde{a}_{\s_1,\s_2} (x, \lambda)=a_{\s_1,\s_2}(y(x,\l),\l)$.
From the above, we see
\begin{align*}
&|a_{\s_1, \s_2} (y, \l)-a_{\s_1,\s_2}(\pm\infty,\l)|\lesssim |\l|^{-3},\\
&|a_{+, +} (x, \lambda)-1|+|a_{-, -} (x, \lambda)-1|+|a_{+, -} (x, \lambda)|+|a_{-, -} (x, \lambda)| \lesssim |\lambda|^{-3}.
\end{align*}
The claim follows from $\tilde{a}_{\s_1,\s_2}(\pm\infty,\l) =a_{\s_1,\s_2}(\pm\infty,\l)$ and $\mathrm{Wr}(\lambda) = -2ia_{-, -} (\infty, \lambda)$, where the last identity follows from the proof of \cite[Theorem 3.11 (iv)]{HT}.
\end{proof}

\subsection{Oscillatory integral representation of the modified wave operators}
Using the stationary representation \eqref{eq:wavestrep} of $W_{\pm}$, we can express their integral kernels in terms of the amplitudes $\tilde{a}_{\s_1,\s_2}$ appearing in the asymptotic expansion \eqref{eq:upasymp} of the Jost solutions. In particular, we obtain their oscillatory integral representations as follows. For the sake of brevity, we give the expression of $W_+$ only.

\begin{proposition}\label{prop:waveexpression1}
Let $\g\in C^{\infty}(\R)$ satisfy $\g\in C_c^{\infty}((\sqrt{2}\l_0,\infty))$ or $\g (\l) =1$ for $\l \gg 1$ with $\supp \g\subset (\sqrt{2}\l_0,\infty)$. Then we have
\begin{align}\label{eq:waveopdecom}
W_{+}\g(P_0)=\sum_{\s_1,\s_2\in\{\pm\}}W_{+,\s_1,\s_2,\g},\,\, W_{+,\s_1,\s_2,\g}(x,x')=\frac{1}{2\pi}\int_0^{\infty} b_{\s_1,\s_2,\g}(x,\l)e^{i(\s_2y(x,\l) -\s_1\l x')}d\l,
\end{align}
where $b_{\s_1,\s_2,\g}$ satisfies $\supp b_{\s_1,\s_2,\g}(x,\l)\subset \supp \g(\l^2)$ and for $\a=0,1,2$ and $\s_1,\s_2\in \{\pm\}$,
\begin{align}
&|\pa_{\l}^{\a}b_{\s_1,\s_2,\g}(x,\l)|\lesssim \l^{-\a}\quad \text{for $x\in \R$},\quad |b_{\pm,\pm,\g}(x,\l)|\gtrsim |\g(\l^2)|\quad \text{for $|x|\gg 1$}\label{eq:b_+jsymbest}\\
&b_{+,+,\g}(x,\l)= (1+o(1))\g(\l^2)\quad \text{as $x\to \infty$}\label{eq:b++--asymp}
\end{align}
uniformly in $\l\geq \sqrt{2}\l_0$ and
\begin{align}
&\left|b_{+,+,\g}(x,\l)-\g(\l^2)\right|+\left|b_{-,-,\g}(x,\l)-\g(\l^2)\right|+|b_{+,-,\g}(x,\l)|+|b_{-,+,\g}(x,\l)| \lesssim \jap{\l}^{-3}\label{eq:bpmlambdalarge}
\end{align}
uniformly in $x\in \R$ and $\l\geq \sqrt{2}\l_0$.

\end{proposition}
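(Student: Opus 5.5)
The plan is to compute the integral kernel of $W_+(\l)$ explicitly from \eqref{eq:wavematJpm} and the Jost functions, and then insert it into the stationary representation \eqref{eq:wavestrep}. Since $\mathcal{F}_0(\l)$ has kernel $(2\pi)^{-1/2}{}^t(e^{-i\l x'},e^{i\l x'})$, we get
\begin{align*}
W_+\g(P_0)(x,x')=(2\pi)^{-\frac12}\int_0^\infty \g(\l^2)\bigl(w_1(x,\l)e^{-i\l x'}+w_2(x,\l)e^{i\l x'}\bigr)d\l ,
\end{align*}
where $w_1$ and $w_2$ are the two components of $W_+(\l)$, that is, the images of the basis vectors. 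The index $\s_1$ in \eqref{eq:waveopdecom} will label these two components.

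The first step is to identify $W_+(\l)$ with generalized eigenfunctions. By the intertwining relation and the Isozaki--Kitada construction, each $w_j(\cdot,\l)$ solves \eqref{eq:stationarySch}. Indeed, $(P-\l^2)W_+(\l)=(P-\l^2)J_+(\l)+iT_+(\l)$, and $iT_+(\l)=-(PJ_+-J_+P_0)\mathcal{F}_0(\l)^*=-(P-\l^2)J_+(\l)$ because $P_0\mathcal{F}_0(\l)^*=\l^2\mathcal{F}_0(\l)^*$. The $J_+(\l)$ part is explicit: by stationary phase in $\xi$, using the cutoff $a_+$, it equals $(2\pi)^{-1/2}\bigl(\overline{\chi}(x/R_0)e^{iy(x,\l)}+\overline{\chi}(-x/R_0)e^{-iy(x,\l)}\bigr)$ up to terms that vanish at infinity. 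In other words, it has an outgoing component, and this fixes the asymptotics at $x\to\pm\infty$. The term $R(\l- i0)T_+(\l)$ is incoming, because its kernel is an incoming resolvent applied to a function decaying like $\jap{x}^{-1-\mu}$, using the bound $|t_+|\lesssim\jap{x}^{-1-\mu}\jap{\xi}$.

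Comparing these asymptotics with \eqref{eq:upasymp} and Lemma \ref{lem:apropertyold} gives the identification. The first component $w_1$ (incoming wave $e^{i\l x}$ from the left) is the solution behaving like $e^{iy}$ at $+\infty$ with no $e^{-iy}$ part there. Hence $w_1=(2\pi)^{-1/2}c(\l)(\l^2-V)^{1/4}u_+$, with normalization $c(\l)=(\l^2-V(\infty))^{1/4}$, which is essentially $\l^{1/2}$. Similarly, $w_2$ is a multiple of $u_-$, with coefficient involving $2i/\mathrm{Wr}(\l)$ or $1$, according to which side carries the pure outgoing wave. Concretely, I would write $w_j=\a_j(\l)u_{+}+\b_j(\l)u_-$ and determine $\a_j,\b_j$ by matching the $e^{\pm iy}$ coefficients at $\pm\infty$ using \eqref{eq:apmelliptic}. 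The Wronskian $\mathrm{Wr}$ enters through Cramer's rule. Substituting \eqref{eq:upasymp} then yields \eqref{eq:waveopdecom} with
\begin{align*}
b_{\s_1,\s_2,\g}(x,\l)=\g(\l^2)\,\frac{(\l^2)^{1/4}}{(\l^2-V(x))^{1/4}}\bigl(\a_{\s_1}(\l)\tilde a_{+,\s_2}(x,\l)+\b_{\s_1}(\l)\tilde a_{-,\s_2}(x,\l)\bigr),
\end{align*}
where $\a_{\s_1},\b_{\s_1}$ are explicit combinations of $1$, $\mathrm{Wr}(\l)^{-1}$ and $\tilde a_{\cdot,\cdot}(\pm\infty,\l)$.

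The estimates then follow directly from the earlier lemmas. The symbol bounds in \eqref{eq:b_+jsymbest} follow from Lemma \ref{lem:apropertyold}$(ii)$,$(iii)$, since $|\mathrm{Wr}|\sim1$ with symbol-type derivatives, and from $|\pa_\l^\a(\l^2-V)^{-1/4}\l^{1/2}|\lesssim\l^{-\a}$ on $\l\ge\sqrt2\l_0$, which uses \eqref{eq:lam0def}. The lower bound for $|b_{\pm,\pm}|$ at $|x|\gg1$ and \eqref{eq:b++--asymp} follow from \eqref{eq:apmelliptic} and from $V(x)\to0$. The estimate \eqref{eq:bpmlambdalarge} follows from Lemma \ref{264112228} together with $(\l^2)^{1/4}(\l^2-V)^{-1/4}=1+O(\l^{-2})$. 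Here the error is $O(\l^{-2})$ rather than $O(\l^{-3})$, so I would adjust the normalization: the eigenfunctions come out with $\sqrt{\partial_x y}$ normalization. Lemma \ref{264112228} is what gives the $\jap{\l}^{-3}$ decay of the off-diagonal $b_{+,-}$ and $b_{-,+}$ and of $\mathrm{Wr}+2i$.

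The main obstacle is the rigorous identification of $W_+(\l)$ with the Jost combinations. This requires showing that $R(\l-i0)T_+(\l)\mathbf c$ has purely incoming asymptotics, with a correct limiting absorption meaning for distributions $\mathcal{F}_0(\l)^*\mathbf c\notin L^2$. It also requires checking that the stationary-phase evaluation of $J_+(\l)$ produces exactly $e^{\pm iy(x,\l)}$, and not merely asymptotically, since the phase $\phi_+-y\xi$ is evaluated at $y$-integration against $e^{\pm i\l y}$ which gives a delta at $\xi=\pm\l$. In fact $J_+\mathcal{F}_0(\l)^*$ is exactly $a_+(x,\pm\l)e^{i\phi_+(x,\pm\l)}$, which removes the issue. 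Then $W_+(\l)\mathbf c$ solves the ODE off $\supp\pa_x a_+$ and the matching reduces to the uniqueness of Jost solutions with given asymptotics.
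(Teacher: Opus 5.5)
Your overall plan matches the paper's. You show that each column of $W_+(\l)$ solves \eqref{eq:stationarySch}, pin it down by its behaviour at $\pm\infty$, express it through Jost solutions, and insert this into \eqref{eq:wavestrep}. However, the identification you actually write down is wrong, and it is the heart of the argument.

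You correctly note that $R(\l-i0)T_+(\l){\bm c}$ is incoming, i.e.\ $((\sgn x)\pa_x+i\l)v=o(1)$. So it contributes only $e^{-iy}$ at $+\infty$ and only $e^{iy}$ at $-\infty$. For $\l\ge\sqrt2\l_0$ the $c_1$-part of $J_+(\l){\bm c}$ is $(2\pi)^{-1/2}\overline{\chi}(x/R_0)e^{iy(x,\l)}c_1$, which vanishes for $x\ll-1$. Hence $w_1$ has \emph{no} $e^{-iy}$ component at $-\infty$. At $+\infty$ it carries $(2\pi)^{-1/2}e^{iy}$ plus, in general, a reflected $e^{-iy}$ term.

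So $w_1$ is a multiple of $\overline{u_-}$, not of $u_+$, and similarly $w_2$ is a multiple of $\overline{u_+}$. Fixing the constant through $\tilde a_{-,-}(\infty,\l)$ and $\mathrm{Wr}(\l)$ gives \eqref{eq:wavematformula}, and hence $b_{\s_1,\s_2,\g}=2i\l^{1/2}\g(\l^2)\,\overline{\tilde a_{-\s_1,-\s_2}(x,\l)}/(\overline{\mathrm{Wr}(\l)}(\l^2-V(x))^{1/4})$. Your choice $w_1\propto u_+$, $w_2\propto u_-$ is the wave matrix of $W_-$, built with the outgoing resolvent (compare the remark following the proposition).

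All the claimed bounds on the $b$'s are insensitive to this swap, so the mistake would not show up in the estimates. But the kernel identity \eqref{eq:waveopdecom} you would derive belongs to a different operator. The paper makes the correct identification rigorous by applying $(\sgn x)\pa_x+i\l$ to both sides of \eqref{eq:wavematformula} and using uniqueness under the Sommerfeld radiation condition. Working with $\overline{u_\mp}$ directly also spares you the coefficients $\a_j,\b_j$ in the basis $u_\pm$. Those would require symbol bounds on the reflection coefficient $\tilde a_{+,-}(-\infty,\l)$, which Lemma \ref{lem:apropertyold} does not state.

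Your worry about \eqref{eq:bpmlambdalarge} is justified, but the remedy you propose does not work. The factor $\l^{1/2}(\l^2-V(x))^{-1/4}=1+V(x)/(4\l^2)+O(\l^{-4})$ is forced by \eqref{eq:upasymp} and depends on $x$, so no renormalization of the eigenfunctions removes it. Combined with Lemma \ref{264112228}, what one actually gets for the diagonal terms is $|b_{\pm,\pm,\g}(x,\l)-\g(\l^2)|\lesssim\jap{\l}^{-2}$. The off-diagonal terms $b_{+,-,\g}$ and $b_{-,+,\g}$ are genuinely $O(\jap{\l}^{-3})$.

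The paper's proof only cites Lemma \ref{264112228} at this point and does not deal with this factor either. Only the $\jap{\l}^{-2}$ bound for the diagonal terms is used later, in the second estimate of \eqref{eq:f_jest}, so nothing downstream is affected.
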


\begin{remark}
Similarly, we have $b_{-,-,\g}(x,\l)=(1+o(1)) \g(\l^2)$ as $x\to -\infty$ uniformly in $\l\geq \sqrt{2}\l_0$. 
\end{remark}

\begin{proof}
First, we show that the wave matrix $W_+(\l)$ satisfies
\begin{align}\label{eq:wavematformula}
(W_+(\l){\bm c})(x)=\frac{2i\l^{\frac{1}{2}}}{(2\pi)^{\frac{1}{2}}\overline{\mathrm{Wr}(\l)}}(\overline{u_-(x,\l)}c_1+\overline{u_+(x,\l)}c_2)
\end{align}
for ${\bm c}={}^t(c_1,c_2)\in \mathbb{C}$ and $\l\geq \sqrt{2}\l_0$, where $\mathrm{Wr}(\lambda)$ is defined in \eqref{eq:defWronskian}. Since the both side of \eqref{eq:wavematformula} satisfies the Schr\"odinger equation \eqref{eq:stationarySch}, it suffices to prove
\begin{align*}
((\sgn x)\pa_x+i\l)(\text{the LHS of \eqref{eq:wavematformula}})=((\sgn x)\pa_x+i\l)(\text{the RHS of \eqref{eq:wavematformula}})+o(1)\quad \text{as $|x|\to \infty$}
\end{align*}
by Sommerfeld's radiation condition \cite[p.407]{Y}.
By the definitions of $J_{+}$, $T_+$ and $\mathcal{F}_0^*(\l)$, and the Fourier inverse formula, we have 
\begin{align*}
(J_{+}(\l){\bm c})(x)=&\frac{1}{(2\pi)^{\frac{1}{2}}}(e^{i\phi_+(x,\l)}c_1a_{+}(x,\l)+e^{i\phi_+(x,-\l)}c_2a_{+}(x,-\l)),\\
(T_{+}(\l){\bm c})(x)=&\frac{1}{(2\pi)^{\frac{1}{2}}}(e^{i\phi_+(x,\l)}c_1t_{+}(x,\l)+e^{i\phi_+(x,-\l)}c_2t_{+}(x,-\l)).
\end{align*}
In particular, $(J_{+}(\l){\bm c})(x)=\frac{1}{(2\pi)^{\frac{1}{2}}}e^{iy(x,\l)}c_1$ for $x\gg 1$ and $\l\geq \sqrt{2}\l_0$, and  $(J_{+}(\l){\bm c})(x)=\frac{1}{(2\pi)^{\frac{1}{2}}}e^{-iy(x,\l)}c_2$ for $x\ll -1$ and $\l\geq \sqrt{2}\l_0$. On the other hand, by the estimate of $t_+$ remarked just after \eqref{eq:tpmdef}, $|(T_{+}(\l){\bm c})(x)|\lesssim \jap{x}^{-1-\m}\jap{\l}\in \jap{x}^{-\frac{1}{2}-\frac{\m}{2}}L^{2}(\R)$. Therefore, $R(\l-i0)(T_{+}(\l){\bm c})\in H_{\mathrm{loc}}^{2}(\R)$ is well-defined and $((\sgn x)\pa_x+i\l)R(\l-i0)(T_{+}(\l){\bm c})(x)=o(1)$ as $|x|\to \infty$ by Sommerfeld's radiation condition \cite[p.407]{Y}. Thus, we obtain
\begin{align*}
((\sgn x)\pa_x+i\l)(\text{the LHS of \eqref{eq:wavematformula}})=
\left\{\begin{aligned}
&\frac{2i\l}{(2\pi)^{\frac{1}{2}}}e^{iy(x,\l)}c_1+o(1)&&(x\to \infty) \\
&\frac{2i\l}{(2\pi)^{\frac{1}{2}}}e^{-iy(x,\l)}c_2+o(1)&&(x\to -\infty).
\end{aligned}\right.
\end{align*}
On the other hand, Lemma \ref{lem:apropertyold} yields
\begin{align*}
&((\sgn x)\pa_x+i\l)\overline{u_{+}(x,\l)}=\left\{\begin{aligned}
&o(1)&&(x\to \infty)\\
&\l^{\frac{1}{2}}\overline{\mathrm{Wr}(\l)}e^{-iy(x,\l)}+o(1)&&(x\to -\infty)
\end{aligned}\right.,\\
&((\sgn x)\pa_x+i\l)\overline{u_{-}(x,\l)}=\left\{\begin{aligned}
&\l^{\frac{1}{2}}\overline{\mathrm{Wr}(\l)}e^{iy(x,\l)}+o(1)&&(x\to \infty)\\
&o(1)&&(x\to -\infty)
\end{aligned}\right.,
\end{align*}
where we have used $\mathrm{Wr}(\l)=-2i\tilde{a}_{+,+}(-\infty,\l)=-2i \tilde{a}_{-, -} (\infty, \lambda)$ as is stated in Lemmas \ref{lem:apropertyold} $(iii)$. Thus, we conclude that $((\sgn x)\pa_x+i\l)(\text{the RHS of \eqref{eq:wavematformula}})$ is equal to $((\sgn x)\pa_x+i\l)(\text{the LHS of \eqref{eq:wavematformula}}) + o(1)$, which proves \eqref{eq:wavematformula}.
 
By \eqref{eq:wavematformula}, the integral kernels of $W_{\pm}(\l)\mathcal{F}_0(\l)$ can be written as
\begin{align}\label{eq:waverepintegrant}
\left(W_{\pm}(\l)\mathcal{F}_0(\l)\right)(x,x')=
\frac{2i\l^{\frac{1}{2}}}{2\pi\overline{\mathrm{Wr}(\l)}}(\overline{u_-(x,\l)}e^{-i\l x'}+\overline{u_+(x,\l)}e^{i\l x'})
\end{align}
for $\l\geq \sqrt{2}\l_0$ in terms of the Jost solutions introduced in the last subsection. Now we define
\begin{align*}
&b_{\s_1,\s_2,\g}(x,\l)=\frac{2i\l^{\frac{1}{2}}}{\overline{\mathrm{Wr}(\l)}(\l^2-V(x))^{\frac{1}{4}}}\overline{\tilde{a}_{\sigma_1',\sigma_2'}(x,\l)}\g(\l^2),
\end{align*}
where $\s_j'=-\s_j$. Now, the results follow from\footnote{Although \eqref{eq:defWronskian} is stated only for compactly supported functions, this formula remains valid for our present function $\g$ by virtue of the standard theory in oscillatory integrals.} \eqref{eq:defWronskian}, and Lemmas \ref{lem:apropertyold} and \ref{264112228}.
\end{proof}

\begin{remark}
Similarly, we have an analogous expression for $W_-(\l)$ by using the identity
\begin{align*}
(W_-(\l){\bm c})(x)=\frac{-2i\l^{\frac{1}{2}}}{(2\pi)^{\frac{1}{2}}\mathrm{Wr}(\l)}(u_+(x,\l)c_1+u_-(x,\l)c_2).
\end{align*}
\end{remark}

\section{The modified wave operators in the middle energy regime}\label{Section:mid}

Throughout this section, we suppose that $V$ satisfies \eqref{26641742}. Here, we divide the proof of Theorem \ref{26661518} into several parts. We recall $M_{\DD} (x):= \frac{1}{2} \int_{0}^{x} V(s)ds$. We focus on $W_+$ only and the other case is similarly handled.

\subsection{The $L^p$-boundedness in the high energy regime}
Let $\psi \in C^{\infty} (\R; [0, 1])$ be such that $\psi (\l) =1$ for $\l\gg 1$ and $\supp \psi \subset (2\l^2 _0, \infty)$.

\begin{thm}\label{264112215}
Let $V \in C^{\infty} (\R; \R)$ satisfy \eqref{26641742} and $M_{\DD}\in L^{\infty}(\R)$. Then the high energy wave operators $W_{+} \psi (P_0)$ are bounded on $L^p (\R)$ for any $p \in [1, \infty]$.
\end{thm}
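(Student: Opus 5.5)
We decompose $W_+\psi(P_0)$ by Proposition \ref{prop:waveexpression1} with $\g=\psi$ (the case $\psi(\l)=1$ for $\l\gg1$):
\begin{align*}
W_+\psi(P_0)=\sum_{\s_1,\s_2}W_{+,\s_1,\s_2,\psi},\qquad W_{+,\s_1,\s_2,\psi}(x,x')=\frac{1}{2\pi}\int_0^\infty b_{\s_1,\s_2,\psi}(x,\l)e^{i(\s_2y(x,\l)-\s_1\l x')}d\l .
\end{align*}
We then strip the phase down to the free one. Since $M_{\DD}\in L^\infty$, write $y(x,\l)=x\l+\Theta(x,\l)$ with $\Theta(x,\l)=\int_0^x(\sqrt{\l^2-V(s)}-\l)ds=-M_{\DD}(x)\l^{-1}+R(x,\l)$. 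Using Lemma \ref{26542109} (the bound on $R$ when $M_{\DD}\in L^\infty$), we aim for $|\pa_\l^\a\Theta(x,\l)|\lesssim \l^{-1-\a}$ for $\a=0,1,2$, uniformly in $x$ and $\l\ge\sqrt2\l_0$. Then
\begin{align*}
c_{\s_1,\s_2}(x,\l):=b_{\s_1,\s_2,\psi}(x,\l)e^{i\s_2\Theta(x,\l)}
\end{align*}
satisfies $|\pa_\l^\a c|\lesssim\l^{-\a}$ for $\a\le2$, uniformly in $x$, and each piece becomes $\frac1{2\pi}\int_0^\infty c_{\s_1,\s_2}(x,\l)e^{i\l(\s_2x-\s_1x')}d\l$. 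This is an $x$-dependent Fourier-multiplier-type kernel in the variable $\s_2x-\s_1x'$, and the reflection $x\mapsto -x$ when $\s_2=-1$ is harmless.

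Next we split off the constant-in-$x$ principal part using \eqref{eq:bpmlambdalarge}. For $(\s_1,\s_2)=(\pm,\pm)$ write $c=\psi(\l^2)+\big(b-\psi(\l^2)\big)e^{i\s_2\Theta}+\psi(\l^2)(e^{i\s_2\Theta}-1)$; for the off-diagonal pieces $c$ itself is already small. The principal contribution $\int_0^\infty\psi(\l^2)e^{\pm i\l(x-x')}d\l$ is troublesome because $\psi\to1$: it is a half-line projection (Hilbert-transform-like) and is unbounded at $p=1,\infty$. So we do not treat the diagonal pieces separately; we recombine. Summing $\s_1=\pm$ with $\s_2=\s_1$ gives $\int_0^\infty\psi(\l^2)(e^{i\l(x-x')}+e^{-i\l(x-x')})d\l=\int_\R\psi(\l^2)e^{i\l(x-x')}d\l$, which is the kernel of $\psi(P_0)$ and hence equals $\delta+$ an $L^1$ kernel, bounded on all $L^p$. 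This cancellation is the extra argument needed at the endpoints (cf. Remark \ref{rem:CalZyg}).

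Every remaining piece has an amplitude $e(x,\l)$ supported in $\l\ge\sqrt2\l_0$ with $|\pa_\l^\a e|\lesssim\l^{-1-\a}$ for $\a\le 2$; here $\l^{-3}$ comes from \eqref{eq:bpmlambdalarge} and $\l^{-1}$ from $e^{i\Theta}-1$. Integrating by parts twice in $\l$ for $|z|\ge1$ (with $z=\s_2x-\s_1x'$) gives $|K(x,z)|\lesssim |z|^{-2}\int\l^{-1}\cdot$, which is integrable. For $|z|\le1$, we split at $\l\sim|z|^{-1}$: the low part contributes $\int_{\sqrt2\l_0}^{1/|z|}\l^{-1}d\l\sim\log(1/|z|)$, and the high part after one integration by parts gives $O(1)$. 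Hence $\sup_x\int|K|dx'+\sup_{x'}\int|K|dx\lesssim\int\min(\log(2/|z|),|z|^{-2})dz<\infty$, and Schur's test yields boundedness on all $L^p$, $1\le p\le\infty$.

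The main obstacle is justifying the uniform symbol bounds $|\pa_\l^\a(e^{i\Theta}-1)|\lesssim\l^{-1-\a}$ without $V\in L^1$: $\int_0^x V$ is bounded only by the assumption, while $R$ involves $\int_0^x V^2$, which may be unbounded for $\mu\le1/2$. We would first try to expand $\sqrt{\l^2-V}-\l$ to higher order until the remainder is $O(\l^{-N}\int_0^x|V|^k)$ with $k\mu>1$, absorbing intermediate terms $\l^{1-2j}\int_0^xV^j$. If these are unbounded this fails, so it is exactly what Lemma \ref{26542109} must supply. Failing that, we would handle the oscillatory factor $e^{-iM_{\DD}(x)/\l}$ directly, as a bounded-in-$x$ family of Mikhlin symbols.
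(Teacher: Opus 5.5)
Your argument is correct and follows the paper's architecture:
\begin{itemize}
\item the stationary kernel formula of Proposition \ref{prop:waveexpression1};
\item Lemma \ref{26542109} to make $R$ harmless;
\item recombination of the two $\psi(\l^2)$ principal parts of $(\s_1,\s_2)=(+,+),(-,-)$ into the kernel of $\psi(P_0)$, which is exactly the paper's $\delta(x-x')$ plus the $\psi(\l^2)-1$ part of $\mathrm{I}_{x,x'}$;
\item Young/Schur for everything else.
\end{itemize}

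Where you genuinely differ is the estimate near the diagonal. The paper merges the remaining diagonal pieces into $f_1-1$ on all of $\R$ and expands the exponential to second order, $f_1-1=c\,M_{\DD}(x)\psi(\l^2)\l^{-1}+O(\jap{\l}^{-2})$. It then uses that this leading symbol is odd in $\l$, so $\int_\R\psi(\l^2)\l^{-1}e^{i\l z}d\l=2i\int_0^\infty\psi(\l^2)\l^{-1}\sin(z\l)\,d\l$ is bounded uniformly in $z$. This yields $|\mathrm{I}_{x,x'}|\lesssim\jap{x-x'}^{-2}$, a bounded kernel. You instead keep each half-line separately and use only the first-order symbol bounds $|\pa_\l^\a(e^{i\Theta}-1)|\lesssim\l^{-1-\a}$. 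You then accept a $\log(2/|z|)$ singularity, which is integrable and hence enough for Schur's test. Your route is more economical, with no second-order expansion of the exponential and no pairing of $\l>0$ with $\l<0$. The paper's route gives sharper pointwise information on the kernel.

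Two corrections.

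First, the class $|\pa_\l^\a e|\lesssim\l^{-1-\a}$ is not available for the pieces $(b_{\pm,\pm,\psi}-\psi(\l^2))e^{\pm i\Theta}$ and $b_{\pm,\mp,\psi}e^{\mp i\Theta}$.
\begin{itemize}
\item \eqref{eq:bpmlambdalarge} only controls their size, by $\jap{\l}^{-3}$.
\item For derivatives you only have $|\pa_\l^\a b|\lesssim\l^{-\a}$ from \eqref{eq:b_+jsymbest}.
\item So your single integration by parts for $|z|\le1$ does not close for these pieces, since it needs $\int_{1/|z|}^\infty|\pa_\l e|\,d\l\lesssim|z|$.
\end{itemize}
The fix is immediate and is what the paper does. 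Absolute integrability of $\jap{\l}^{-3}$ gives $|K|\lesssim1$ for these pieces. For $|z|\ge1$, two integrations by parts need only $|\pa_\l^2e|\lesssim\l^{-2}$, which holds.

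Second, your closing worry is unfounded. Lemma \ref{26542109} is precisely $V\in L^2(\R)$, proved by $\int_{-R}^RV^2\,ds=[FV]_{-R}^{R}-\int_{-R}^RFV'\,ds$ with $F=2M_{\DD}$ bounded and $V'\in L^1$; this holds for every $\mu>0$. Under $M_{\DD}\in L^\infty$ it bounds $\int_0^xV^2$ uniformly in $x$ and gives $|\pa_\l^\a R|\lesssim\l^{-3-\a}$. That is exactly your bound on $\Theta$, with no higher-order expansion or fallback needed.
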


Before giving a proof, we state the following technical lemma.

\begin{lemma}\label{26542109}
Under the assumptions of Theorem \ref{264112215}, we have $V \in L^2 (\R)$.
\end{lemma}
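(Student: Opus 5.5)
The proof is a single integration by parts. It trades one factor of $V$ in $V^2$ for its bounded primitive $M_{\DD}$, and lets the derivative fall on the other factor, where \eqref{26641742} with $|\alpha|=1$ gives integrable decay for every $\mu>0$.

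Write $F(x):=2M_{\DD}(x)=\int_0^x V(s)\,ds$. Then $F'=V$, and by assumption $\|F\|_{L^\infty}<\infty$. For $X>0$ we write $V^2=V\,F'$ and integrate by parts on $[0,X]$:
\begin{align*}
\int_0^X V(s)^2\,ds = V(X)F(X)-V(0)F(0)-\int_0^X V'(s)F(s)\,ds .
\end{align*}
We bound the three terms uniformly in $X$:
\begin{itemize}
\item The boundary term at $0$ vanishes, since $F(0)=0$.
\item The boundary term at $X$ satisfies $|V(X)F(X)|\le \|V\|_{L^\infty}\|F\|_{L^\infty}\lesssim 1$.
\item By \eqref{26641742} with $|\alpha|=1$, the remaining integral satisfies
\begin{align*}
\int_0^X |V'(s)||F(s)|\,ds\lesssim \|F\|_{L^\infty}\int_0^\infty \jap{s}^{-1-\mu}\,ds<\infty .
\end{align*}
\end{itemize}
Since $\int_0^X V^2\,ds$ is nondecreasing in $X$ and bounded, letting $X\to\infty$ gives $V\in L^2(\R_{\ge 0})$.

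The half-line $\R_{\le 0}$ is handled the same way: integrate by parts on $[-X,0]$, again using $F(0)=0$, the boundedness of $F$, and $\jap{s}^{-1-\mu}\in L^1$. Combining the two half-lines yields $V\in L^2(\R)$.

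There is no genuine obstacle. The key observation is that boundedness of $M_{\DD}$ alone does not control $\int V^2$; one also needs the derivative bound on $V$ in \eqref{26641742}. This is what excludes potentials such as $V(x)=\sin(x^2)\jap{x}^{-1/4}$, which have bounded primitive but $V\notin L^2$, and which violate \eqref{26641742}. The decay of $V$ itself is used only to make the boundary term bounded.
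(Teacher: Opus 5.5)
Your proof is correct and is essentially the paper's argument: both integrate $V^2=F'V$ by parts with $F=2M_{\DD}$, bound the boundary terms by $\|F\|_{L^\infty}\|V\|_{L^\infty}$, and bound the remaining integral by $\|F\|_{L^\infty}\int\jap{s}^{-1-\mu}ds$. The only difference is that you treat the two half-lines separately, whereas the paper integrates over the symmetric interval $[-R,R]$ directly.
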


\begin{proof}
Integrating by parts with $F(x):= \int_{0}^{x} V(s) ds$, we obtain
\begin{align*}
\int_{-R}^{R} V(s)^2 ds = \int_{-R}^{R} F'(s) V(s) ds = F(R)V(R)-F(-R)V(-R) - \int_{-R}^{R} F(s) V'(s) ds
\end{align*}
for $R >0$. Therefore it holds that
\begin{align*}
\|V\|^2 _{L^2 (-R, R)} \lesssim \|F\|_{L^{\infty} (\R)} \|V\|_{L^{\infty} (\R)} + \|F\|_{L^{\infty} (\R)} \int_{-R}^{R} \jap{s}^{-1-\mu} ds
\end{align*}
and $R \to \infty$ gives $V \in L^2 (\R)$.
\end{proof}

\begin{proof}[Proof of Theorem \ref{264112215}] We employ the expression of $W_+\g(P_0)$ given in \eqref{eq:waveopdecom}. Changing the variable $\l\mapsto -\l$, we have
\begin{align*}
W_{+,-,\s_2,\g}(x,x')=\frac{1}{2\pi}\int_{-\infty}^{0} b_{-,\s_2,\g}(x,-\l)e^{i(\s_2y(x,\l) -\l x')}d\l.
\end{align*}
Using this and $\d(x-x')=\frac{1}{2\pi}\int_{\R}e^{i(x-x')\l}d\l$, we rewrite \eqref{eq:waveopdecom} as
\begin{align}
\left(W_{+}\g(P_0)\right)(x,x')&=\d(x-x')+\frac{1}{2\pi}\int_{\R} (f_{1} (x, \lambda)-1) e^{i\lambda (x-x')} d\lambda +\frac{1}{2\pi}  \int_{\R} f_{2} (x, \lambda) e^{-i\lambda (x+x')} d\lambda \notag \\
&=:\d(x-x')+\mathrm{I}_{x, x'}+\mathrm{II}_{x, x'},\label{eq:W+intdecom}
\end{align}
where we set $g(x,\l)=y(x,\l)-|\l|x$ and  
\begin{align*}
f_1 (x, \lambda) = \left\{\begin{aligned}
&b_{+, +,\g} (x, \lambda) e^{ig(x, \lambda)}&&\lambda >0, \\
&b_{-, -,\g} (x, -\lambda) e^{-ig(x, \lambda)}&&\lambda <0,
\end{aligned}\right.,\quad f_2 (x, \lambda) = \left\{\begin{aligned}
&b_{+, -,\g} (x, \lambda) e^{-ig(x,\l)}&&\lambda >0, \\
&b_{-, +,\g} (x, -\lambda) e^{ig(x, \lambda)}&&\lambda <0.
\end{aligned}\right.
\end{align*}

Next, we claim that for $\a=0,1,2$ and $j=1,2$, we have
\begin{align}\label{eq:f_jest}
|\pa_{\l}^{\a}f_j(x,\l)|\lesssim \jap{\l}^{-\a},\,\, \left|f_1(x,\l)-1-\frac{2i}{\lambda}\g(\l^2) M_{\DD}(x)\right|\lesssim \jap{\l}^{-2},\,\, |f_2(x,\l)|\lesssim \jap{\l}^{-3}
\end{align}
uniformly in $x\in \R$ and $\l\in \R\setminus \{0\}$, where we recall $M_{\DD}(x)=\frac{1}{2}\int_{0}^{x} V(s) ds$. Thanks to Taylor's theorem, for $\a\in\mathbb{N}$, we have
\begin{align*}
\left|\pa_{\l}^{\a}R(x,\l)\right|\lesssim |\l|^{-3-\a} \int_{0}^{x}|V(s)|^2ds\lesssim |\l|^{-3-\a},\quad \text{where $R(x,\l):=g(x, \lambda) - |\l|^{-1}M_{\DD}(x)$}.
\end{align*}
Here, we have used $V\in L^2(\R)$, which was proved in Lemma \ref{26542109}. By the assumption $M_{\DD}\in L^{\infty}(\R)$, we have $|g(x,\l)|\lesssim |\l|^{-1}$ and $|\pa_{\l}^{\a}(e^{\pm ig(x,\l)}-1)|\lesssim |\l|^{-1-\a}$. Combining this with \eqref{eq:b_+jsymbest} and the property of $\g$, we obtain the first estimates in \eqref{eq:f_jest}. The third estimate in \eqref{eq:f_jest} is a direct consequence of \eqref{eq:bpmlambdalarge}. The second estimate can be proved by \eqref{eq:bpmlambdalarge} and $|e^{\pm ig(x,\l)}-1\mp ig(x,\l)|\lesssim |g(x,\l)|^2\lesssim |\l|^{-2}$, where the last inequality follows from Taylor's theorem.

Thirdly, we show
\begin{align}\label{eq:I,IIest}
|\mathrm{I}_{x, x'}|\lesssim \jap{x-x'}^{-2},\quad |\mathrm{II}_{x, x'}|\lesssim \jap{x+x'}^{-2}.
\end{align}
Based on the first estimates in \eqref{eq:f_jest}, integration by parts twice yields $|\mathrm{I}_{x, x'}|\lesssim |x-x'|^{-2}$ and $|\mathrm{II}_{x, x'}|\lesssim |x+x'|^{-2}$. On the other hand, by the third estimate of \eqref{eq:f_jest} and the integrability of $\jap{\l}^{-3}$, we obtain $|\mathrm{II}_{x, x'}|\lesssim 1$ and hence $|\mathrm{II}_{x, x'}|\lesssim \jap{x+x'}^{-2}$. Thus, it suffices to prove $|\mathrm{I}_{x, x'}|\lesssim 1$. Similarly to the above discussion, the second estimate of \eqref{eq:f_jest} implies
\begin{align*}
\left|\mathrm{I}_{x, x'}-2iM_{\DD}(x) \int_{\R}\frac{\g(\l^2)}{\lambda} e^{i\lambda (x-x')} d\lambda\right|\lesssim 1.
\end{align*}
Finally, we see that $\int_{\R}\frac{\g(\l^2)}{\lambda} e^{i\lambda (x-x')} d\lambda=2i\int_{0}^{\infty}\frac{\g(\l^2)\sin((x-x')\l)}{\l}d\l$, which is bounded in $x,x'\in \R$. By using $M_{\DD}\in L^{\infty}(\R)$ again, we obtain $|\mathrm{I}_{x, x'}|\lesssim 1$.

By \eqref{eq:I,IIest} and  Young's inequality, we find that the operators which have integral kernels $\mathrm{I}_{x, x'}$ and $\mathrm{II}_{x, x'}$ respectively are bounded on $L^p$. On the other hands, the operator which has the integral kernel $\d(x-x')$ is the identity operator that is clearly bounded on $L^p$ as well. By virtue of the decomposition \eqref{eq:W+intdecom}, we conclude that $W_+\g(P_0)$ is bounded on $L^p$. 
\end{proof}

\begin{remark}\label{rem:CalZyg}
For the boundedness of $W_+\g(P_0)$ on $L^p$ for $1<p<\infty$, the second and the third estimates in \eqref{eq:f_jest} are not needed. Indeed, the first estimate in \eqref{eq:f_jest} and the estimates of $x$-derivatives of $\tilde{a}$ (provided in \cite{HT}) show that the linear operators $T_1$ and $T_2$ that have integral kernels $\frac{1}{2\pi}\int_{\R}f_1(x,\l)e^{i(x-x')\l}d\l$ and $\frac{1}{2\pi}\int_{\R}f_2(x,\l)e^{-i(x-x')\l}d\l$ respectively are Calder\'on--Zygmund operators \cite[Definition 4.1.8]{G2}, which are bounded on $L^p$ by \cite[Theorem 4.2.2]{G2}. Set $(Ru)(x):=u(-x)$ which is clearly bounded on $L^p$. By \eqref{eq:W+intdecom}, we can write $W_+\g(P_0)=T_1+T_2\circ R$, which is bounded on $L^p$ as well. The most difficult part of the above proof is to show that $W_+\g(P_0)$ is bounded on both $L^1$ and $L^{\infty}$. The second and the third estimates in \eqref{eq:f_jest} prevent $W_+\g(P_0)$ from being an operator of Hilbert transform type.

\end{remark}

\subsection{Unboundedness for the case $p <2$}\label{26531322}
Let
$\psi \in C^{\infty} _c (\R; \R)$ be such that $\supp \psi \subset (2\l^2 _0, \infty)$ and not identically zero. We take $\lambda_1 \in (\sqrt{2}\l_0, \infty)$ and $\psi \in C^{\infty} _c ((0, \infty); [0, 1])$ such that $\psi (\lambda_1^2) \neq 0$. Then we have

\begin{thm}\label{264131227}
Suppose that $p \in [1, 2]$ and $M_{\DD} \notin L^{\infty}(\mathbb{R})$. The middle energy wave operator $W_+ \psi (P_0)$ is bounded on $L^p (\R)$ if and only if $p=2$.
\end{thm}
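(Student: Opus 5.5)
The case $p=2$ is immediate, since $W_+$ is an isometry on $\mathrm{Ran}\,E_{P_0}([2\l_0^2,\infty))$ and $\psi(P_0)$ is bounded on $L^2$. For $p\in[1,2)$ I would construct a sequence $u_n$ with $\|u_n\|_{L^p}\sim 1$ and $\|W_+\psi(P_0)u_n\|_{L^p}\to\infty$. Fix $u\in\mathcal S(\R)$ with $\hat u\in C_c^\infty((\l_1-\delta,\l_1+\delta))$, where $\delta>0$ is small enough that $\psi(\l^2)\neq0$ there, and set $u_n(x')=u(x'-z_n)$ for shifts $z_n$ chosen below. Since $\hat u$ is supported on the positive half-line, only the first component of $\mathcal F_0(\l)u_n$ survives in \eqref{eq:wavestrep}. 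Hence, by \eqref{eq:waveopdecom}, only the terms $W_{+,+,\s_2,\psi}$ contribute, and
\begin{align*}
W_+\psi(P_0)u_n(x)=\sum_{\s_2\in\{\pm\}}(2\pi)^{-\frac12}\int_0^\infty b_{+,\s_2,\psi}(x,\l)\,\hat u(\l)\,e^{i(\s_2y(x,\l)-\l z_n)}\,d\l .
\end{align*}

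\textbf{Choice of $x_n$ and $z_n$.} Since $M_{\DD}\notin L^\infty$, I pick $|x_n|\to\infty$ with $|M_{\DD}(x_n)|\to\infty$. As in the idea of the proof, I write $y(x,\l)=x\l-M_{\DD}(x)\l^{-1}+R(x,\l)$ with $|\pa_\l^\a R|\lesssim \int_0^x|V|^2$. Set $m_n:=M_{\DD}(x_n)$ and $z_n:=x_n+m_n\l_1^{-2}$, and let $I_n$ be the interval of length $c|m_n|$ centered at $x_n$. For $x\in I_n$:
\begin{itemize}
\item Since $|M_{\DD}'|=|V|/2\lesssim\jap{x}^{-\mu}$ and $|m_n|\lesssim |x_n|^{\max(1-\mu,0)}$ (up to a logarithm), the quantity $M_{\DD}(x)$ stays comparable to $m_n$ on $I_n$.
\item The $(+,+)$ phase $\Phi=y(x,\l)-\l z_n$ satisfies $\pa_\l\Phi\approx x-z_n+M_{\DD}(x)\l^{-2}$. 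This has a nondegenerate zero $\l(x)\in(\l_1-\delta,\l_1+\delta)$ for $x$ in a subinterval of $I_n$ of length $\gtrsim|m_n|$, with $\pa_\l^2\Phi\approx -2M_{\DD}(x)\l^{-3}\sim |m_n|$.
\end{itemize}
The stationary phase theorem, using the symbol bounds \eqref{eq:b_+jsymbest} for $\a\le2$, then gives
\begin{align*}
\bigl|W_{+,+,+,\psi}u_n(x)\bigr|\gtrsim |m_n|^{-\frac12}\,\bigl|b_{+,+,\psi}(x,\l(x))\,\hat u(\l(x))\bigr| - O(|m_n|^{-1}),
\end{align*}
and the main term is $\gtrsim|m_n|^{-1/2}$ on a set of measure $\gtrsim|m_n|$. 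The lower bound on $b_{+,+,\psi}$ comes from \eqref{eq:b_+jsymbest}, valid since $|x|\gg1$.

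\textbf{The $\s_2=-$ term.} Its phase derivative is $-\pa_\l y-z_n\approx-(x+z_n)$, which has size $\sim|x_n|$ on $I_n$. Two integrations by parts therefore give $O(|x_n|^{-2})$ there, which is negligible. Consequently
\begin{align*}
\|W_+\psi(P_0)u_n\|_{L^p}^p\ \ge\ \|W_+\psi(P_0)u_n\|_{L^p(I_n)}^p\ \gtrsim\ |m_n|^{1-\frac p2}\longrightarrow\infty \qquad (p<2),
\end{align*}
while $\|u_n\|_{L^p}=\|u\|_{L^p}$ is fixed. This contradicts boundedness.

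\textbf{Main obstacle.} The hard step is justifying that the remainder $R(x,\l)$ and the variation of $M_{\DD}$ across $I_n$ do not destroy the nondegenerate critical point. Lemma \ref{26542109} is unavailable here, and $\int_0^x|V|^2$ may be unbounded when $\mu\le\frac12$. I would first try to choose $x_n$ along which $\int_0^{x_n}|V|^2=o(|m_n|)$, which holds automatically for $\mu>\frac12$. In general I would compare $|M_{\DD}(x)|\sim\int_0^x V$ with $\int_0^x V^2\lesssim\int_0^x\jap{s}^{-\mu}|V|$. If this selection fails, I would instead expand $y$ to higher order in $\l^{-1}$ and treat $\pa_\l^2 y(x,\l)-2\l^{-3}\int_0^x(\cdots)$ directly, using only that $\pa_\l^2 y\sim \l^{-3}\int_0^x V\,(1+O(V))$. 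What is really needed is that $|\pa_\l^2\Phi|\sim|m_n|$, and this persists as long as $|V|\ll\l^2$, which is guaranteed by \eqref{eq:lam0def}.
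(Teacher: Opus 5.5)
Your architecture matches the paper's. A packet with positive Fourier support near $\l_1$ kills $W_{+,-,\pm,\psi}$, non-stationary phase disposes of $W_{+,+,-,\psi}$, and stationary phase for $W_{+,+,+,\psi}$ with large parameter $|M_{\DD}(x_n)|$ gives size $|M_{\DD}(x_n)|^{-1/2}$ on a set of measure $\gtrsim|M_{\DD}(x_n)|$. Shifting the input to $z_n=x_n+m_n\l_1^{-2}$ and observing near $x_n$, rather than centering the input at $x_n$ and observing on $\{(\l_1+\varepsilon)^2(x-x_n)<-M_{\DD}(x)<(\l_1-\varepsilon)^2(x-x_n)\}$ as the paper does, is immaterial.

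However, the step you label the main obstacle is the real content of the proof, and you leave it open. Since $|\pa_\l^\a R(x,\l)|\lesssim\l^{-3-\a}\int_0^xV^2\,ds$, everything hinges on finding $x_n$ with $|M_{\DD}(x_n)|\to\infty$ and $\int_0^{x_n}V^2\,ds=o(|M_{\DD}(x_n)|)$. This is automatic when $V$ is square integrable (e.g.\ $\mu>1/2$), but not in general.

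Your fallback is not correct. We have $\pa_\l^2y(x,\l)=-\int_0^xV(\l^2-V)^{-3/2}\,ds=-2\l^{-3}M_{\DD}(x)+O(\int_0^xV^2\,ds)$. The pointwise bound $|V|\ll\l^2$ makes the correction small relative to $\int_0^x|V|$, not relative to $|\int_0^xV|=2|M_{\DD}(x)|$, which can be much smaller because of cancellation. Expanding $y$ further in $\l^{-1}$ cannot help, since every term beyond $-\l^{-1}M_{\DD}$ is $O(\int_0^xV^2)$. Likewise, the comparison $\int_0^xV^2\lesssim\int_0^x\jap{s}^{-\mu}|V|$ discards exactly the sign information that $M_{\DD}$ carries.

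The missing idea is Lemma \ref{2658032}, which exploits the derivative bound $V'\in L^1$ coming from \eqref{26641742} with $|\alpha|=1$. Assume $M_{\DD}\notin L^\infty(\R_{>0})$ and take record points $x_n\in[0,n]$ with $|M_{\DD}(x_n)|=\max_{[0,n]}|M_{\DD}|$. Integrating by parts with $V=2M_{\DD}'$ gives
\[
\frac12\int_a^{x_n}V^2\,ds=M_{\DD}(x_n)V(x_n)-M_{\DD}(a)V(a)-\int_a^{x_n}M_{\DD}V'\,ds .
\]
The record property gives $|\int_a^{x_n}M_{\DD}V'\,ds|\le|M_{\DD}(x_n)|\int_a^\infty|V'|\,ds$, which is $\le\varepsilon|M_{\DD}(x_n)|$ once $a$ is large. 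Together with $V(x_n)\to0$, this yields $\int_0^{x_n}V^2\,ds=o(|M_{\DD}(x_n)|)$.

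On your interval $I_n$ you then add $|\int_{x_n}^xV^2\,ds|\le c|m_n|\sup_{I_n}V^2=o(|m_n|)$. So $R$ and its $\l$-derivatives are $o(|m_n|)$ uniformly on $I_n$, and your stationary phase step goes through. Without such a selection argument, your proposal proves the theorem only in the square-integrable case.
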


We may assume $M_{\DD} \notin L^{\infty}(\mathbb{R}_{>0})$. We observe that for $\lambda \in \supp \psi$ and $x \ge 1$, 
\begin{align}\label{eq:phasedecom}
y(x, \lambda) = \lambda \int_{0}^{x} \left(1-\frac{V(s)}{\lambda^2}\right)^{\frac{1}{2}} ds =x\l-M_{\DD}(x)\l^{-1} +R (x, \lambda)
\end{align}
holds by Taylor's theorem, where $R$ satisfies
\begin{align}\label{eq:phaserembound}
|\partial^{\alpha} _{\lambda} R (x, \lambda)| \lesssim \frac{1}{\lambda^{3+\alpha}} \int_{0}^{x} |V(s)|^{2} ds,\quad |\partial^{\alpha} _{\lambda} (R (x, \l)-R(x',\l))|\lesssim \frac{1}{\lambda^{3+\alpha}} \int_{x'}^{x} |V(s)|^{2} ds
\end{align}
for $x\geq x'\geq 1$.
Before proceeding to the proof, we need the following technical lemma, which enables us to regard $R(x,\l)$ as a remainder term of $M_{\DD}$.

\begin{lemma}\label{2658032}
Suppose that $M_{\DD} \notin L^{\infty} (\R_{>0})$ holds. Then there exists a sequence $\{x_n\} \subset \R_{>0}$ such that $x_n \le x_{n+1}$ for all $n \in \N$, $x_n \to \infty$ as $n \to \infty$ and
\begin{align}
|M_{\DD} (x_n)| \to \infty, \quad  |M_{\DD} (x_n)|^{-1}\int_{0}^{x_n} V(s)^2 ds \to 0 \quad (n \to \infty). \label{2658814}
\end{align}
\end{lemma}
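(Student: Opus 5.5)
We prove Lemma \ref{2658032} by comparing $\int_0^x V^2$ with $|M_{\DD}|$, using the derivative bound on $V$ through the integration by parts identity from the proof of Lemma \ref{26542109}. Write $F(x)=\int_0^x V(s)ds = 2M_{\DD}(x)$ and $G(x)=\int_0^x V(s)^2ds$, both continuous on $\R_{\ge 0}$. Since $M_{\DD}\notin L^\infty(\R_{>0})$, the running maximum $m(x):=\max_{0\le s\le x}|F(s)|$ is nondecreasing and tends to $\infty$. We choose record points: define $x_n$ as the first point where $|F|$ reaches the level $n$ (for $n$ beyond $|F(0)|=0$), i.e. $x_n=\min\{x\ge 0: |F(x)|=n\}$. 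These points are nondecreasing in $n$, tend to infinity, and satisfy $|F(x_n)|=m(x_n)=n$. So it remains to show $G(x_n)=o(n)$.

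For this, integrate by parts on $[0,x]$ exactly as in Lemma \ref{26542109}:
\begin{align*}
G(x)=F(x)V(x)-\int_0^x F(s)V'(s)\,ds .
\end{align*}
At $x=x_n$, bound $|F(s)|\le m(x_n)=n$ for $s\le x_n$. This gives
\begin{align*}
G(x_n)\le n|V(x_n)|+\int_0^{x_n}|F(s)|\,|V'(s)|\,ds .
\end{align*}
The first term is $n\cdot o(1)$ since $|V(x)|\lesssim\jap{x}^{-\mu}\to0$. The second term is the main obstacle: bounding $|F|\le n$ and $|V'|\lesssim\jap{s}^{-1-\mu}$ only gives $O(n)$, not $o(n)$, so we need more.

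To get the $o(n)$, split the integral at a fixed large $L$. For the piece on $[0,L]$, use $|F(s)|\le m(L)$, which contributes $m(L)\int_0^L\jap{s}^{-1-\mu}ds = O_L(1)=o(n)$. For the piece on $[L,x_n]$, use $|F(s)|\le n$, which contributes at most $n\int_L^\infty \jap{s}^{-1-\mu}ds\lesssim n L^{-\mu}$. Hence
\begin{align*}
\limsup_{n\to\infty} G(x_n)/n\lesssim L^{-\mu}
\end{align*}
for every $L$. Letting $L\to\infty$ yields $G(x_n)/|M_{\DD}(x_n)|=2G(x_n)/n\to0$, which is \eqref{2658814}. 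Here it is essential that $V'$ is integrable, which holds for every $\mu>0$ by \eqref{26641742}; this is what lets us avoid assuming $V\in L^2$.
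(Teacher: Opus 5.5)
Your proof is correct and follows essentially the same route as the paper. Both arguments integrate $V^2=F'V$ by parts, bound $|F|$ on $[0,x_n]$ by its value at a running-maximum point $x_n$, and keep the compact piece $O(1)$ while making the tail $\int_L^\infty |V'(s)|\,ds$ small. The only differences are cosmetic. You take first hitting times of the level $n$, which makes $x_n$ increasing for free, where the paper takes maximizers of $|M_{\DD}|$ on $[0,n]$. You also split the $FV'$ integral at $L$, where the paper integrates by parts only on $[a,x_n]$ and bounds $\int_0^a V^2$ separately.
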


\begin{proof}
For each $n \in \N$, we set $x_n \in \R_{>0}$ as $x_n \in [0, n]$, $|M_{\DD} (x_n)| = \max_{x \in [0, n]} |M_{\DD} (x)|$ and $x_n \le x_{n+1}$. Since we assume $M_{\DD} \notin L^{\infty} (\R_{>0})$, we have $x_n \to \infty$ and $|M_{\DD} (x_n)| \to \infty$. %If $V\in L^2(\R_{>0})$, then the claim is trivial. Therefore, we may assume $V\notin L^2(\R_{>0})$

Let $\e>0$. We take $a>0$ so that $\int_{a}^{\infty}|V'(s)|ds<\frac{\e}{8}$ and then choose $N\in\mathbb{N}$ such that $|V(x_n)|<\frac{\e}{8}$, and $|M_{\DD}(x_n)|^{-1}|M_{\DD} (a)V(a)|<\frac{\e}{8}$, and $|M_{\DD}(x_n)|^{-1}\int_0^aV(s)^2ds<\frac{\e}{4}$ for $n\geq N$. The integration by parts shows
\begin{align}\label{eq:VL^2id}
\frac{1}{2}\int_{a}^{x_n} V(s)^2 ds = M_{\DD} (x_n) V(x_n) -M_{\DD} (a)V(a) -\int_{a}^{x_n} M_{\DD} (s)V' (s)ds.
\end{align}
We note $\left|\int_{a}^{x_n}M_{\DD}(s)V'(s)ds\right|\leq |M_{\DD}(x_n)|\int_a^{\infty}|V'(s)|ds$ by $x_n\leq n$ and the choice of $x_n$.
Dividing the both side of \eqref{eq:VL^2id} by $M_{\DD}(x_n)$, we have $\frac{1}{2}|M_{\DD}(x_n)|^{-1}|\left|\int_{a}^{x_n} V(s)^2 ds\right|<\frac{3}{8}\e$ for $n\geq N$. Using this and $|M_{\DD}(x_n)|^{-1}\int_0^aV(s)^2ds<\frac{\e}{4}$, we obtain $|M_{\DD}(x_n)|^{-1}|\left|\int_{0}^{x_n} V(s)^2 ds\right|<\e$ for $n\geq N$.
\end{proof}

Let $x_n$ be the sequence in Lemma \ref{2658032}. We define
\begin{align}\label{eq:u_ndef}
u_n(x):=\chi(x-x_n)\quad (n=1,2,\hdots),
\end{align}
where $\chi\in \mathcal{S}(\mathbb{R})$ such that $\chi\neq 0$ and $\hat{\chi}(\l)=1$ for $\l\in \supp \g$ and $\supp \hat{\chi}(\l)\subset (0,\infty)$. 
Firstly we give a uniform $L^p$ bound for $W_{+, +, -, \psi} u_n$.

\begin{lemma}\label{264131246}
For sufficiently large $R_0$, we have $\|W_{+, +, -, \psi} u_n\|_{L^p (\R_{\ge R_0})} \lesssim 1$ uniformly in $n \in \N$.
\end{lemma}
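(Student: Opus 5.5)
The plan is to compute $W_{+,+,-,\psi}u_n$ explicitly by integrating out $x'$, and then to use non-stationary phase in $\l$. The key observation is that the phase $-y(x,\l)-\l x_n$ has no critical points when $x\ge R_0$, because both terms have $\l$-derivatives of the same sign.

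\emph{Step 1: reduce to a single oscillatory integral.} By \eqref{eq:waveopdecom} and Fubini (justified since $\chi\in\mathcal{S}$ and $b_{+,-,\psi}(x,\cdot)$ has compact support in $\supp\psi(\cdot^2)$),
\begin{align*}
W_{+,+,-,\psi}u_n(x)=\frac{1}{2\pi}\int_0^\infty b_{+,-,\psi}(x,\l)e^{-iy(x,\l)}\int_{\R}e^{-i\l x'}\chi(x'-x_n)\,dx'\,d\l
=\frac{1}{(2\pi)^{\frac12}}\int_0^\infty b_{+,-,\psi}(x,\l)\,e^{-i(y(x,\l)+\l x_n)}\,d\l .
\end{align*}
Here we used $\hat\chi(\l)=1$ on the support of $b_{+,-,\psi}(x,\cdot)$. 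The $\l$-integration therefore runs over a fixed compact set $K\subset(\sqrt2\l_0,\infty)$, independent of $x$ and $n$.

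\emph{Step 2: bounds on the phase.} Set $\Phi_n(x,\l):=y(x,\l)+\l x_n$. For $x\ge R_0\ge1$ and $\l\in K$, \eqref{eq:lam0def} gives $\l^2/2\le\l^2-V\le 2\l^2$, hence
\begin{align*}
\pa_\l\Phi_n=\int_0^x\frac{\l\,ds}{\sqrt{\l^2-V(s)}}+x_n\ \ge\ c\,x+x_n\ \ge\ c\,x,
\end{align*}
using $x_n>0$. In the other direction,
\begin{align*}
|\pa_\l^2\Phi_n|=\left|\int_0^x\frac{V(s)}{(\l^2-V(s))^{3/2}}ds\right|\lesssim x,
\end{align*}
and in the same way $|\pa_\l^3\Phi_n|\lesssim x$, uniformly in $n$ and in $\l\in K$. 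Consequently $|\pa^k_\l\Phi_n|/|\pa_\l\Phi_n|\lesssim1$ for $k=2,3$ and $1/|\pa_\l\Phi_n|\lesssim\jap{x}^{-1}$.

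\emph{Step 3: integrate by parts twice.} Write $e^{-i\Phi_n}=\bigl(i\,\pa_\l\Phi_n\bigr)^{-1}\pa_\l e^{-i\Phi_n}$ and apply the transpose twice. There are no boundary terms, because $b_{+,-,\psi}(x,\cdot)$ is compactly supported in $K$. The bounds $|\pa_\l^\a b_{+,-,\psi}|\lesssim\l^{-\a}\lesssim1$ on $K$ for $\a\le2$ from \eqref{eq:b_+jsymbest}, together with Step 2, give
\begin{align*}
|W_{+,+,-,\psi}u_n(x)|\lesssim \jap{x}^{-2}\quad (x\ge R_0),
\end{align*}
uniformly in $n$. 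Since $\jap{x}^{-2}\in L^p(\R)$ for every $p\in[1,\infty]$, this yields $\|W_{+,+,-,\psi}u_n\|_{L^p(\R_{\ge R_0})}\lesssim1$.

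\emph{Main point to check.} The only delicate step is the uniform control of $\pa_\l^2y$ and $\pa_\l^3y$ relative to $\pa_\l y$. These are only $O(x)$, not $O(1)$, since $\int_0^x\jap{s}^{-\mu}ds$ may grow like $x^{1-\mu}$ when $M_{\DD}\notin L^\infty$. It is essential that each integration by parts produces at worst the ratio $\pa_\l^k\Phi_n/(\pa_\l\Phi_n)^2\lesssim x^{-1}$, which is what makes the $O(x)$ growth harmless. The sign of $x_n$ plays a role as well: it guarantees there is no cancellation in $\pa_\l\Phi_n$, whereas for $W_{+,+,+,\psi}$ cancellation does occur and produces the stationary points responsible for unboundedness. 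Taking $R_0\ge1$ is enough for all of the above.
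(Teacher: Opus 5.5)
Your proposal is correct and follows essentially the paper's proof. Both arguments write $W_{+,+,-,\psi}u_n(x)$ as a single $\l$-integral with phase $y(x,\l)+\l x_n$, use $|\pa_\l\Phi_n|\gtrsim x+x_n$ and $|\pa_\l^\a\Phi_n|\lesssim x$, and integrate by parts twice to get $|W_{+,+,-,\psi}u_n(x)|\lesssim(1+x)^{-2}$.

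The only difference is how you get the lower bound. You obtain it directly from $\l^2-V\le 2\l^2$, so $R_0\ge1$ suffices, whereas the paper goes through the expansion $y=x\l-M_{\DD}(x)\l^{-1}+R(x,\l)$ with $R_0$ large; your route is slightly cleaner. The sign slip in $e^{-i\Phi_n}=(i\pa_\l\Phi_n)^{-1}\pa_\l e^{-i\Phi_n}$ (it should be $(-i\pa_\l\Phi_n)^{-1}$) is harmless.
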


\begin{proof}
We rewrite $W_{+, +, -, \psi} u_n$ as
\begin{align*}
W_{+, +, -, \psi} u_n (x) = \frac{1}{2\pi}\int_{0}^{\infty} b_{+, -, \psi} (x, \lambda) e^{-i\Phi_{n, +, -} (x, \lambda)} d\lambda,
\end{align*}
where $\Phi_{n, +, -} (x, \lambda) =y(x,\l)+\l x_n$. We observe that
\begin{align*}
\pa_{\l}\Phi_{n, +, -} (x, \lambda)=(x+x_n)+\pa_{\l}(-\l^{-2}M_{\DD}(x)+R(x,\l))=x+x_n+O(\jap{x}^{1-\mu})
\end{align*}
uniformly in $\l\in \supp \g$ by \eqref{eq:phasedecom}, \eqref{eq:phaserembound}, and $\left| \int_{0}^{x} V(s)^k ds\right|\lesssim \jap{x}^{1-\mu}$. Therefore, it holds that
\begin{align*}
|\partial_{\lambda} \Phi_{n, +, -} (x, \lambda)| \gtrsim x+x_n+1, \quad |\partial^{\alpha} _{\lambda} \Phi_{n, +, -} (x, \lambda)| \lesssim |M_{\DD} (x)| \lesssim x
\end{align*}
for $\alpha \ge 2$, $n \in \N$ and $x \ge R_0$ if $R_0$ is sufficiently large. The integration by parts shows
\begin{align*}
|W_{+, +, -, \psi} u_n (x)|\lesssim (1+x)^{-2}
\end{align*}
uniformly in $n\in\mathbb{N}$ and $x\geq R_0$, where we have used the symbolic estimate of $b_{+,-}$ in \eqref{eq:b_+jsymbest}. Thus, the assertion follows from $(1+x)^{-2}\in L^p(\R_{\geq R_0})$.
\end{proof}

Next we give a lower bound on $W_{+, +, +, \psi} u_n$. We take small $\epsilon >0$ such that $[\lambda_1 -3\epsilon, \lambda_1 + 3\epsilon] \subset \supp \g$.

\begin{lemma}\label{264131323}
We define $\Omega_n (\epsilon):=\{x \in \R_{\ge R_0} \mid (\l_1 + \epsilon)^2 (x-x_n) < -M_{\DD} (x) <(\l_1 - \epsilon)^2 (x-x_n) \}$.
If $\epsilon >0$ is sufficiently small, we have
\begin{align*}
|W_{+, +, +, \psi} u_n (x)| \gtrsim |M_{\DD} (x_n)|^{-\frac{1}{2}},\quad \VOL (\Omega_{n} (\epsilon)) \gtrsim |M_{\DD} (x_n)|
\end{align*}
for $x \in \Omega_n(\e)$ and sufficiently large $n \in \N$.
\end{lemma}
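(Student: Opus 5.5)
The plan is to compute $W_{+,+,+,\psi}u_n$ explicitly as a one-dimensional oscillatory integral in $\l$ with large parameter $|M_{\DD}(x_n)|$, and then apply the stationary phase theorem on $\Omega_n(\e)$.

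\textbf{Step 1: reduction to an oscillatory integral.} By \eqref{eq:waveopdecom} and the definition \eqref{eq:u_ndef}, performing the $x'$-integration first gives
\begin{align*}
W_{+,+,+,\psi}u_n(x)=\frac{1}{(2\pi)^{1/2}}\int_0^\infty b_{+,+,\psi}(x,\l)\,\hat{\chi}(\l)\,e^{i(y(x,\l)-\l x_n)}\,d\l .
\end{align*}
Since $\hat\chi=1$ on $\supp\psi(\l^2)$, the factor $\hat\chi$ drops out. By \eqref{eq:phasedecom}, the phase is
\begin{align*}
\Phi_n(x,\l)=\l(x-x_n)-M_{\DD}(x)\l^{-1}+R(x,\l).
\end{align*}

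\textbf{Step 2: comparability on $\Omega_n(\e)$.} Fix $\d>0$ small. I will show that for $x\in\Omega_n(\e)$ and $n$ large:
\begin{enumerate}[(a)]
\item $x\gtrsim x_n$, so $x$ is large. Indeed $|M_{\DD}(x)|\le\frac12\int_0^x|V|=o(x)$, so $|x-x_n|\sim|M_{\DD}(x)|$ forces $x\geq x_n/2$.
\item $M_{\DD}(x)=M_{\DD}(x_n)(1+O(\d))$. This holds because $|V|\le\d$ between $x_n$ and $x$, so $|M_{\DD}(x)-M_{\DD}(x_n)|\le\d|x-x_n|\lesssim\d|M_{\DD}(x)|$.
\item $\int_0^x V^2\le\int_0^{x_n}V^2+\d^2|x-x_n|=o(|M_{\DD}(x_n)|)$, by Lemma \ref{2658032}.
\end{enumerate}
Consequently $|x-x_n|\sim|M_{\DD}(x_n)|=:\Lambda_n\to\infty$. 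By \eqref{eq:phaserembound}, $\pa_\l^\a R(x,\cdot)=o(\Lambda_n)$ on $\supp\psi(\l^2)$, so we may write $\Phi_n=\Lambda_n\varphi_n$ with $\varphi_n=\varphi^0_n+o(1)$ in $C^3$ and
\begin{align*}
\varphi^0_n(\l)=\bigl(\l(x-x_n)-M_{\DD}(x)\l^{-1}\bigr)/\Lambda_n .
\end{align*}

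\textbf{Step 3: stationary phase.} The model phase satisfies $\pa_\l\varphi^0_n=0$ exactly at $\l_c^2=-M_{\DD}(x)/(x-x_n)$. The definition of $\Omega_n(\e)$ places $\l_c$ in $(\l_1-\e,\l_1+\e)$, and there $|\pa^2_\l\varphi^0_n|=2|M_{\DD}(x)|/(\Lambda_n\l_c^3)\sim1$. Away from $\l_c$ one has $|\pa_\l\varphi^0_n(\l)|\gtrsim|\l-\l_c|$ on $\supp\psi(\l^2)$. By the implicit function theorem, the perturbed phase $\varphi_n$ has a unique nondegenerate critical point $\tilde\l_c=\l_c+o(1)$, still inside $[\l_1-2\e,\l_1+2\e]$. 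The stationary phase theorem, with the amplitude bounds \eqref{eq:b_+jsymbest} for $\a\le2$ and integration by parts away from $\tilde\l_c$, then gives
\begin{align*}
W_{+,+,+,\psi}u_n(x)=c\,\Lambda_n^{-1/2}|\pa_\l^2\varphi_n(\tilde\l_c)|^{-1/2}e^{i\Phi_n(x,\tilde\l_c)}\,b_{+,+,\psi}(x,\tilde\l_c)+O(\Lambda_n^{-1}),
\end{align*}
with $c\neq0$. Choose $\e$ so small that $|\psi(\l^2)|\ge\frac12|\psi(\l_1^2)|$ on $[\l_1-3\e,\l_1+3\e]$. Since $x\gg1$, the lower bound in \eqref{eq:b_+jsymbest} yields $|b_{+,+,\psi}(x,\tilde\l_c)|\gtrsim1$, which proves the first claim for large $n$.

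\textbf{Step 4: volume bound.} Using (b), every $x$ with $x-x_n$ lying between $-M_{\DD}(x_n)/(\l_1+\e/2)^2$ and $-M_{\DD}(x_n)/(\l_1-\e/2)^2$ belongs to $\Omega_n(\e)$ once $\d\ll\e$. This is an interval of length $\gtrsim\e\Lambda_n$, and it lies in $\R_{\ge R_0}$ by (a). Hence $\VOL(\Omega_n(\e))\gtrsim|M_{\DD}(x_n)|$.

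\textbf{Main obstacle.} I expect the hardest step to be Step 2: making sure that the remainder $R$ and the variation of $M_{\DD}$ over the window of length $\sim\Lambda_n$ are genuinely $o(\Lambda_n)$ uniformly on $\Omega_n(\e)$. This is exactly where Lemma \ref{2658032} and the decay $V(s)\to0$ have to be combined carefully, including in the case $x>n$, which lies outside the range where $x_n$ maximizes $|M_{\DD}|$.
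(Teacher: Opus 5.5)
Your proposal is correct and follows essentially the paper's argument. Both proofs show $|M_{\DD}(x)|\sim|M_{\DD}(x_n)|\sim|x-x_n|$ on $\Omega_n(\e)$ using $V\to 0$, use Lemma \ref{2658032} to make the rescaled remainder $R$ negligible, and locate a unique nondegenerate critical point near $\l_1$. They then apply H\"ormander's stationary phase theorem together with the lower bound in \eqref{eq:b_+jsymbest}, and obtain the volume bound from the points $x$ with $x-x_n\approx-\l_1^{-2}M_{\DD}(x_n)$.

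The differences are cosmetic. You rescale by $|M_{\DD}(x_n)|$ instead of $x-x_n$, and you use the implicit function theorem instead of the intermediate value theorem plus convexity. Your step (a) makes explicit a fact the paper uses tacitly, namely that $\Omega_n(\e)$ escapes to infinity. Your worry about $x>n$ is moot: your Step 2 never uses the maximality of $x_n$, and the literal definition of $\Omega_n(\e)$ already forces $x<x_n\le n$.
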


\begin{proof}
We rewrite $W_{+, +, +, \psi} u_n$ as
\begin{align*}
W_{+, +, +, \psi} u_n (x) =\frac{1}{2\pi} \int_{0}^{\infty} b_{+, +, \psi} (x, \lambda) e^{i\Phi_{n, +, +} (x, \lambda)} d\lambda,
\end{align*}
where $\Phi_{n, +, +} (x, \lambda) =y(x, \lambda) -\lambda x_n=(x-x_n)\l-M_{\DD}(x)\l^{-1} +R (x, \lambda)$ with the notation in \eqref{eq:phasedecom}. 

Firstly, we show that $M_{\DD}(x)\sim M_{\DD}(x_n)$ for $x\in \Omega_n(\e)$ and $\VOL (\Omega_{n} (\epsilon)) \gtrsim |M_{\DD} (x_n)|$ for sufficiently large $n$.
By the mean value theorem, for $x \in \Omega_{n} (\epsilon)$, we have $\xi_{x, x_n} \in [x, x_n] \hspace{1mm} (\mathrm{or} \in [x_n, x])$ such that $M_{\DD} (x) -M_{\DD} (x_n) = \frac{1}{2} V(\xi_{x, x_n}) (x-x_n)$, which yields $|M_{\DD} (x)| \sim |M_{\DD} (x_n)|$ for $x\in \Omega_n(\epsilon)$. Moreover, there exists $\xi_{n, c} \sim x_n$ satisfying $M_{\DD} (x_n - cM_{\DD} (x_n)) -M_{\DD} (x_n) = \frac{1}{2} V(\xi_{n, c}) \cdot (-c)M_{\DD} (x_n)$, which ensures that if $|c - \l^{-2} _1| \ll 1$, we have $x_n - cM_{\DD} (x_n) \in \Omega_{n} (\epsilon)$. Therefore $\VOL (\Omega_{n} (\epsilon)) \gtrsim |M_{\DD} (x_n)|$ follows.

Put $L_{n,x}:=x-x_n$ and $\phi_{n,x}(\l)=L_{n,x}^{-1}\Phi_{n, +, +} (x, \lambda)$. We rewrite it as
\begin{align*}
\phi_{n, x} (\lambda) = \lambda - \frac{M_{\DD} (x)}{x-x_n} \frac{1}{\lambda} + r_{n, x} (\lambda), \quad r_{n, x} (\lambda) :=(x-x_n)^{-1}R(x,\l)
\end{align*}
for $x\in \Omega_n(\e)$. Then, we have have $|\pa_{\l}^{\a}r_{n,x}(\l)|=o(1)$ for $x\in \Omega_n(\e)$ and $\l\in \supp \g$ as $n\to \infty$. To justify this, we observe that by \eqref{eq:phaserembound},
\begin{align*}
&|\pa_{\l}^{\a}r_{n,x}(\l)|\lesssim |x-x_n|^{-1}\int_{0}^{x} V(s)^2 ds\sim |M_{\DD} (x_n)|^{-1}\int_{0}^{x_n} V(s)^2 ds + |M_{\DD} (x_n)|^{-1}\int_{x_n}^{x} V(s)^2 ds\\
&|M_{\DD} (x_n)|^{-1}\left|\int_{x_n}^{x} V(s)^2 ds \right| \le |M_{\DD} (x_n)|^{-1} \cdot |V (\xi_{x, x_n})|^2 |x-x_n| \lesssim   |V (\xi_{x, x_n})|^2,
\end{align*}
where we have used $|M_{\DD}(x)|\sim |M_{\DD}(x_n)|\sim |x-x_n|$. Since $\x_{x,x_n}\in [x,x_n]$, we have $\x_{x,x_n}$ with $x\in \Omega_n(\e)$ tends to $\infty$ and $|V(\x_{x,x_n})|\to 0$ as $n\to \infty$. This and Lemma \ref{2658032} prove $|\pa_{\l}^{\a}r_{n,x}(\l)|=o(1)$.

Next, we show that, if $n \in \N$ is sufficiently large and $x \in \Omega_n (\epsilon)$, there exists a unique $\lambda_{n, x} \in [\lambda_1 -2\epsilon, \lambda_1 +2\epsilon]$ such that $\partial_{\lambda} \phi_{n, x} (\lambda_{n, x}) =0$. In fact, we can see
\begin{align*}
\partial_{\lambda} \phi_{n, x} (\lambda) = 1 + \frac{M_{\DD} (x)}{x-x_n} \frac{1}{\lambda^2} +\partial_{\lambda}r_{n, x} (\lambda), \quad \partial^2 _{\lambda} \phi_{n, x} (\lambda) = -\frac{2M_{\DD} (x)}{x-x_n} \frac{1}{\lambda^3} + \partial^2 _{\lambda}r_{n, x} (\lambda).
\end{align*}
These formulae yield $\partial^2 _{\lambda} \phi_{n, x} (\lambda) >0$ on $\supp \psi$ and
\begin{align*}
&\partial_{\lambda} \phi_{n, x} (\lambda_1 -2\epsilon) \le 1-\frac{(\lambda_1 -\epsilon)^2}{(\lambda_1 -2\epsilon)^2} + o(1) <0, \\
&\partial_{\lambda} \phi_{n, x} (\lambda_1 +2\epsilon) \ge 1-\frac{(\lambda_1 +\epsilon)^2}{(\lambda_1 +2\epsilon)^2} +o(1)>0.
\end{align*}
as $n\to \infty$.
Therefore the intermediate value theorem ensures the unique existence of $\lambda_{n, x} \in [\lambda_1 -2\epsilon, \lambda_1 +2\epsilon]$. 

Now, by the quantitative stationary phase theorem \cite[Theorem 7.7.5]{Hor} with the large parameter $L_{n,x}=x-x_n$, we obtain
\begin{align*}
|W_{+, +, +, \psi} u_n (x)| \gtrsim \g(\l_{n,x}^2)L^{-\frac{1}{2}} _{n, x} \sim |M_{\DD} (x)|^{-\frac{1}{2}}\sim |M_{\DD}(x_n)|^{-\frac{1}{2}}\quad (x\in \Omega_n(\e))
\end{align*}
for sufficiently large $n$ and sufficiently small $\e$, where we have used \eqref{eq:b_+jsymbest}, $|\partial^2 _{\lambda} \phi_{n, x} (\lambda_{n, x})| \sim 1$ and $|\partial^{\alpha} _{\lambda} \phi_{n, x} (\lambda)| \lesssim 1$ for $\lambda \in \supp \psi$ and $\alpha=0,1,2$ to check the assumption. This completes the proof.
\end{proof}

\begin{proof}[Proof of Theorem \ref{264131227}]
We take $u_n$ as in \eqref{eq:u_ndef}. Since $\supp \hat{\chi}\subset (0,\infty)$, we have $W_{+, -, \sigma, \psi} u_n =0$ for $\sigma = \pm$. Lemmas \ref{264131246} implies $\|W_{+, +, -, \psi}u_n\|_{L^p (\Omega_n(\e))}\lesssim 1$.
By Lemma \ref{264131323}, we obtain 
\begin{align*}
\|W_{+, +, +, \psi}u_n\|_{L^p (\R_{\ge R_0})}^p\geq \|W_{+, +, +, \psi}u_n\|_{L^p (\Omega_n(\e))}^p\gtrsim |M_{\DD} (x_n)|^{1-\frac{p}{2}} \to \infty\quad \text{as $n\to \infty$}.
\end{align*}
Thus, the claim follows.
\end{proof}

\subsection{Unboundedness for the case $p>2$}

Let
$\psi \in C^{\infty} _c (\R; \R)$ be such that $\supp \psi \subset (2\l^2 _0, \infty)$ and not identically zero. We take $\lambda_1 \in (\sqrt{2}\l_0, \infty)$ such that $\psi (\lambda_1^2) \neq 0$. Then we have

\begin{thm}\label{264221222}
Let $p \in [2, \infty]$ and suppose $M_{\DD} \notin L^{\infty}(\mathbb{R})$. The middle energy wave operator $W_+ \psi (P_0)$ is bounded on $L^p (\R)$ if and only if $p=2$.
\end{thm}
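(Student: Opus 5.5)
We argue by duality-type testing: for $p>2$ we construct $u_n$ with $\|u_n\|_{L^p}$ controlled while $\|W_+\psi(P_0)u_n\|_{L^p}\to\infty$. As in the proof of Theorem \ref{264131227}, we assume $M_{\DD}\notin L^\infty(\R_{>0})$ and take the sequence $x_n$ of Lemma \ref{2658032}. The kernel analysis in Lemma \ref{264131323} gives, for $x\in\Omega_n(\e)$ and $x'$ near $x_n$,
\begin{align*}
W_{+,+,+,\psi}(x,x')\approx L^{-\frac12}\,\tilde b(x,x')\,e^{i\Phi(x,x')},
\end{align*}
where $L=|x-x'|\sim|M_{\DD}(x_n)|$. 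This is valid as long as $-M_{\DD}(x)/(x-x')$ lies in the window near $\l_1^2$.

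We mimic the $L^\infty$ extremizer $u(x')=\sgn\overline{T(x_0,x')}$. We fix a base point $x_0=\tilde x_n\in\Omega_n(\e)$, chosen to sit in the middle of $\Omega_n(\e)$, so that $x_0-x_n\sim -\l_1^{-2}M_{\DD}(x_0)$. We then set
\begin{align*}
u_n(x'):=\overline{\tilde b(x_0,x')}\,e^{-i\Phi(x_0,x')}\,\chi\big(\delta (x'-x_n)/|M_{\DD}(x_n)|\big),
\end{align*}
with $\chi\in C_c^\infty$ and a small $\delta>0$. More concretely, we take the Fourier-side description $u_n=\psi_1(P_0)$ applied to the above, or directly $e^{-i\Phi}$ with its stationary-phase frequency $\l_{x_0,x'}$ ranging in $[\l_1-\e,\l_1+\e]$. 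Then $u_n$ is supported on an interval $I_n$ of length $\sim\delta|M_{\DD}(x_n)|$, with $|u_n|\sim1$ there, so $\|u_n\|_{L^p}\sim|M_{\DD}(x_n)|^{1/p}$.

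The plan has three steps.

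First, we compute the output at points $x$ with $|x-x_0|\le c$, for small $c$. Writing $W_{+,+,+,\psi}u_n(x)=\int_0^\infty b\,e^{iy(x,\l)}\,\overline{\cdots}\,d\l$ again as a $\l$-integral, the $x'$-integral of $e^{i(\Phi(x,x')-\Phi(x_0,x'))}$ is non-oscillating. This follows because $\partial_x\Phi(x,x')=\partial_x y(x,\l_{x,x'})\approx\l_{x,x'}$ is bounded, so over $|x-x_0|\le c$ the phase difference is $O(c)$. Hence $|W_{+,+,+,\psi}u_n(x)|\gtrsim |I_n|\,L^{-1/2}\sim|M_{\DD}(x_n)|^{1/2}$. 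Here the remainder $O(|x-x'|^{-1})$ in the kernel contributes only $O(\log)$, which is negligible.

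Second, the lower bound on the set $|x-x_0|\le c$ alone only gives $\|Wu_n\|_p\gtrsim|M_{\DD}|^{1/2}$ against $\|u_n\|_p\sim|M_{\DD}|^{1/p}$. This ratio already diverges for $p>2$, since $1/2>1/p$. For $p=\infty$ it gives the divergence $|M_{\DD}(x_n)|^{1/2}\to\infty$ directly.

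Third, we bound the other pieces. The terms $W_{+,-,\sigma}u_n$ and $W_{+,+,-}u_n$ must not cancel the main term at $x$ near $x_0$. For $W_{+,+,-}$, the phase $y(x,\l)+\l x'$ has derivative $\gtrsim x+x'$, so integration by parts as in Lemma \ref{264131246} gives $O(|M_{\DD}|^{-1}\cdot|I_n|)=O(1)$ pointwise. For $W_{+,-,\sigma}$, we arrange the frequency support of $u_n$ to lie in $(0,\infty)$ up to rapidly decaying errors, because its local frequency is $\l_{x_0,x'}>0$. A cleaner alternative is to replace $u_n$ by $\Pi_+u_n$, a smooth positive-frequency projection; this is bounded on $L^p$ for $1<p<\infty$, and for $p=\infty$ we use the direct pointwise argument. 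The remaining contributions are then $O(1)$ near $x_0$.

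The main obstacle is making the first step rigorous uniformly. This requires the stationary phase expansion of the kernel with remainder controlled uniformly in $x'\in I_n$, which means choosing $\delta$ small so that $-M_{\DD}(x)/(x-x')$ stays inside the nondegenerate window. It also requires verifying that $\Phi(x,x')-\Phi(x_0,x')$ is nearly constant in $x'$. For the latter we will use $\partial_x\Phi(x,x')=\sqrt{\l_{x,x'}^2-V(x)}$ and the $o(1)$ control of $r_{n,x}$ from Lemma \ref{264131323}.
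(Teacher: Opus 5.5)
Your argument is correct and uses the same mechanism as the paper: a conjugate-phase test function on a set of length $\sim|M_{\DD}(x_n)|$, near-cancellation of phases on an $O(1)$ neighbourhood of a base point, and the comparison of $|M_{\DD}(x_n)|^{1/2}$ with $|M_{\DD}(x_n)|^{1/p}$. The real difference is where you put the base point.

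The paper evaluates the output near $x_n$ and spreads the input over $\{x' : (x_n,x')\in\Omega_1\}$, at distance $\sim|M_{\DD}(x_n)|$ from $x_n$. This way the remainder $R$ is controlled directly by Lemma \ref{lem:RperturbMd} at the sequence points. You instead centre the input at $x_n$ and evaluate near $x_0\in\Omega_n(\epsilon)$, mirroring the $p<2$ construction. That needs the mean-value control of Lemma \ref{264131323} at outputs far from $x_n$, which is available, as you note.

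Your narrow window ($\lambda_{x_0,x'}$ within $O(\delta)$ of $\lambda_1$, with conjugated amplitude $\overline{\tilde b}$) pays off in two ways. First, every $x'\in\supp u_n$ is in the stationary regime for all $|x-x_0|\le c$, so you need neither Proposition \ref{prop:p>2stphase} $(i)$ nor the $\Omega_1/\Omega_2$ splitting with Young's inequality. Second, the integrand has positive real part with no sign condition on $\psi$. The paper's wide window $C_1\ll 1\ll C_2$ implicitly uses $\psi\ge 0$ when it discards the $x'$ with $\lambda_{x,x'}$ far from $\lambda_1$. Your envelope identity $\partial_x\Phi(x,x')=\sqrt{\lambda_{x,x'}^2-V(x)}$ also bounds the phase difference in one line, replacing the paper's three-term estimate of $\mathcal{P}(n,x,x')$.

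Conversely, the paper handles $W_{+,-,\sigma}$ more cleanly than you do. It composes on the right with $\varphi(D_x)$, where $\varphi\in C_c^\infty((0,\infty))$ equals $1$ on $\{\xi>0 : \xi^2\in\supp\psi\}$. Its kernel is Schwartz, so it is bounded on every $L^p$ including $p=1,\infty$, and it annihilates $W_{+,-,\sigma}$ exactly. Your separate argument at $p=\infty$ is therefore unnecessary.

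Two slips:
\begin{enumerate}
\item The cutoff should be $\chi((x'-x_n)/(\delta|M_{\DD}(x_n)|))$. As written, its support has length $|M_{\DD}(x_n)|/\delta$ and leaves the stationary window.
\item The $O(|x-x'|^{-1})$ remainder contributes $O(\delta)$, not $O(\log)$, since $|x-x'|\sim|M_{\DD}(x_n)|$ on $I_n$.
\end{enumerate}
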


We may assume $M_{\DD} \notin L^{\infty}(\mathbb{R}_{>0})$. Let $x_n$ be a sequence in Lemma \ref{2658032} again. We set 
\begin{align*}
\Psi_{+, +} (x, x', \lambda) = y(x, \l) -\l x'=\l(x-x')-\l^{-1}M_{\DD}(x)+R(x,\l)
\end{align*}
with the notation in \eqref{eq:phasedecom} so that $W_{+, +, +, \psi} (x, x') = \int_{0}^{\infty} b_{+, +, \psi} (x, \lambda) e^{i\Psi_{+, +} (x, x', \lambda)}d\lambda$. For $0<C_1<C_2$, we define
\begin{align}\label{eq:Omegadef}
&\Omega:=\{(x,x')\in \R^2\mid -C_2(x-x')< M_{\mathrm{D}}(x)< -C_1(x-x') \}.
\end{align}
As a first step, we see that the term $R(x,\l)$ can be regarded as a perturbation of $M_{\DD}(x)$ if $x$ is close to the sequence $x_n$.

\begin{lemma}\label{lem:RperturbMd}
For $\a\in\mathbb{N}_0$ and sufficiently large $n\in\mathbb{N}$, we have $|M_{\DD}(x)|\gg 1$ and $|\pa_{\l}^{\a}R(x,\l)|\ll |M_{\DD}(x)|$ if $|x-x_n|<1$ and $\l\in \supp \g$. Moreover, we have $M_{\DD}(x)\sim M_{\DD}(x_n)$ when $|x-x_n|<1$ with sufficiently large $n$.
\end{lemma}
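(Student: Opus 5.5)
The plan is to transfer the information at $x_n$, supplied by Lemma \ref{2658032}, to the unit neighbourhood $|x-x_n|<1$ via the mean value theorem, and then bound $R$ and its $\l$-derivatives through \eqref{eq:phaserembound}. We only use that $V$ is bounded and tends to zero, that $|M_{\DD}(x_n)|\to\infty$, and the ratio condition in \eqref{2658814}.

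\textbf{Comparability of $M_{\DD}(x)$ and $M_{\DD}(x_n)$.} For $|x-x_n|<1$ the mean value theorem gives
\[
|M_{\DD}(x)-M_{\DD}(x_n)|\le \tfrac12\|V\|_{L^\infty}|x-x_n|\lesssim 1 .
\]
Since $|M_{\DD}(x_n)|\to\infty$, for large $n$ this bounded difference is at most $\tfrac12|M_{\DD}(x_n)|$. Hence $M_{\DD}(x)$ has the same sign as $M_{\DD}(x_n)$ and satisfies $\tfrac12|M_{\DD}(x_n)|\le|M_{\DD}(x)|\le 2|M_{\DD}(x_n)|$. This yields $M_{\DD}(x)\sim M_{\DD}(x_n)$ and $|M_{\DD}(x)|\gg1$ simultaneously. (Since $x_n\to\infty$, we also have $x\ge1$ for large $n$, so \eqref{eq:phasedecom} and \eqref{eq:phaserembound} apply.)

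\textbf{Smallness of the remainder.} Because $\l\in\supp\g$ is bounded below by $\sqrt2\l_0$, \eqref{eq:phaserembound} gives, for every $\a\in\N_0$,
\[
|\pa_\l^\a R(x,\l)|\lesssim_\a \int_0^{x}V(s)^2\,ds\le \int_0^{x_n}V(s)^2\,ds+\|V\|_{L^\infty}^2 .
\]
We divide by $|M_{\DD}(x)|\sim|M_{\DD}(x_n)|$ and treat the two terms separately. The first ratio, $\int_0^{x_n}V^2/|M_{\DD}(x_n)|$, tends to $0$ by \eqref{2658814}. The second is $O(1)/|M_{\DD}(x_n)|\to0$. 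Together these give $|\pa_\l^\a R(x,\l)|=o(|M_{\DD}(x)|)$ uniformly for $|x-x_n|<1$ and $\l\in\supp\g$. For each fixed $\a$ this means the ratio is as small as we like once $n$ is large, which is the meaning of $\ll$ here.

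\textbf{Main obstacle.} The only delicate point is that \eqref{eq:phaserembound} is stated for $\int_0^x$ rather than $\int_0^{x_n}$. The increment over an interval of length less than $1$ is handled by boundedness of $V$; when $x<x_n$ the integral is simply smaller. No further difficulty is expected.
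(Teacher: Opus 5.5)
Your proposal is correct and follows essentially the same route as the paper: comparability comes from $|M_{\DD}(x)-M_{\DD}(x_n)|\le\frac12\|V\|_{L^\infty}|x-x_n|\lesssim 1\ll|M_{\DD}(x_n)|$, and $\pa_\l^\a R$ is controlled via \eqref{eq:phaserembound} together with \eqref{2658814}. The only cosmetic difference is that the paper splits $R(x,\l)=R(x_n,\l)+(R(x,\l)-R(x_n,\l))$ and uses the second estimate in \eqref{eq:phaserembound}, while you bound $\int_0^x V^2$ directly by $\int_0^{x_n}V^2+\|V\|_{L^\infty}^2$; the two are equivalent.
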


\begin{proof}
We employ \eqref{eq:phaserembound} to obtain
\begin{align*}
|\pa_{\l}^{\a}R(x_n,\l)|\lesssim \int_0^{x_n}|V(s)|^2ds,\quad |\pa_{\l}^{\a}(R(x,\l)-R(x_n,\l))|\lesssim \left|\int_{x_n}^xV(s)^2ds\right|\lesssim |x-x_n|\leq 1
\end{align*}
for $|x-x_n|\leq 1$ and $\l\in \supp \g$. Moreover, $|M_{\DD}(x)|\geq |M_{\DD}(x_n)|-\frac{1}{2}\left|\int_{x_n}^xV(s)ds\right| \gtrsim |M_{\DD}(x_n)|\gg 1$ for $|x-x_n|\leq 1$ and sufficiently large $n$. Thus, the first assertion follows from the choice of the sequence $x_n$ in Lemma \ref{2658032}. The second assertion follows from $|M_{\DD}(x)-M_{\DD}(x_n)|=\frac{1}{2}|\int_{x_n}^xV(s)ds|\leq \frac{1}{2}\|V\|_{L^{\infty}}|x-x_n|\lesssim 1 \ll |M_{\DD}(x_n)|$ when $|x-x_n|<1$ with sufficiently large $n$. This implies  $M_{\DD}(x)\sim M_{\DD}(x_n)$ in that case.
\end{proof}

\begin{proposition}\label{prop:p>2stphase}
For $R_2\gg R_1\gg 1$, $C_2\gg 1$, $0<C_1\ll 1$ and sufficiently large $n\in\mathbb{N}$, we have the following:

\vspace{1mm}
\noindent$(i)$ We have
\begin{align*}
|W_{+,+,+,\g}(x,x')|\lesssim (1+|x-x'|)^{-2}
\end{align*}
if $|x-x_n|<1$ and $(x,x')\in \Omega^c\cup \{|x| \le R_1, |x'| \ge R_2 \} \cup \{|x| \ge R_2, |x'| \le R_1\}\cup \{|x|,|x'|\leq R_2\}$.

\vspace{1mm}
\noindent$(ii)$ Set $\phi_{x, x'} (\lambda):=(x-x')^{-1}\Psi_{+,+}(x,x',\l)$. If $|x-x_n|<1$ and $(x,x')\in \Omega_1:=\Omega\cap \{x,x'\geq R_1 \}$, then there exists a unique non-degenerate critical point $\l_{x,x'}\in (C_1,C_2)$ of $\phi_{x, x'} (\lambda)$  such that
\begin{align*}
&W_{+,+,+,\g}(x,x')=|x-x'|^{-\frac{1}{2}} b_{+, +,\g} (x, \l_{x, x'}) e^{i(x-x')\phi_{x, x'} (\l_{x, x'}) -\frac{i\pi}{4}}+ O(|x-x'|^{-1}),\\
&\VOL (\{ x' \in \R \mid  (x,x'),  (x_n, x')\in \Omega_1,\,\, |\l_{x,x'}-\l_1|<\e'\})\sim |M_{\DD}(x_n)|\quad \text{for $|x-x_n|<1$}
\end{align*}
for sufficiently small $\e'>0$, where we recall that $\l_1$ satisfies $\g(\l_1^2)\neq 0$.

\end{proposition}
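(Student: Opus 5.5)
We treat $W_{+,+,+,\g}(x,x')=\frac{1}{2\pi}\int_0^\infty b_{+,+,\g}(x,\l)e^{i\Psi_{+,+}(x,x',\l)}d\l$ as a one-dimensional oscillatory integral in $\l\in\supp\g(\cdot^2)$, a compact subset of $(\sqrt2\l_0,\infty)$, using the decomposition \eqref{eq:phasedecom}. Throughout, $x$ is restricted to $|x-x_n|<1$. By Lemma \ref{lem:RperturbMd}, $M_{\DD}(x)\sim M_{\DD}(x_n)$ and $|M_{\DD}(x)|\gg1$, and all $\l$-derivatives of $R(x,\l)$ are $o(|M_{\DD}(x)|)$ on $\supp\g$. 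The phase derivative is
\[
\pa_\l\Psi_{+,+}=(x-x')+\l^{-2}M_{\DD}(x)+\pa_\l R(x,\l),\qquad \pa_\l^\a\Psi_{+,+}=O(|M_{\DD}(x)|)\quad (\a\ge2).
\]
Every estimate below comes from comparing $x-x'$ with $-M_{\DD}(x)$.

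For (i) we use non-stationary phase. On $\Omega^c$ we have $M_{\DD}(x)\ge -C_1(x-x')$ or $M_{\DD}(x)\le -C_2(x-x')$. We choose $C_1\ll\inf\l^2$ and $C_2\gg\sup\l^2$ over $\supp\g$. Then $|\pa_\l\Psi_{+,+}|\gtrsim |x-x'|+|M_{\DD}(x)|$, since the two main terms cannot cancel and the $R$-term is absorbed. Integrating by parts twice, with $|\pa_\l^\a b|\lesssim1$ from \eqref{eq:b_+jsymbest} and higher derivatives of the phase bounded by $|M_{\DD}(x)|\lesssim$ the lower bound on the first derivative, gives $(1+|x-x'|)^{-2}$. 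The other listed regions reduce to this case. If $|x|\le R_1$ or $|x|,|x'|\le R_2$, the condition $|x-x_n|<1$ with $n$ large is incompatible with $|x|\le R_2$, so only $\{|x|\ge R_2,|x'|\le R_1\}$ is nonvacuous. There, if the point lies in $\Omega$, then $|x-x'|\sim|M_{\DD}(x)|$. I expect we can still argue as in $\Omega^c$, or else the region is meant to be absorbed. To be safe, in that subregion I would fall back on the trivial bound $|W|\lesssim1$ combined with an integration by parts against $|\pa_\l\Psi|\gtrsim$ the distance from the critical set. This bookkeeping is routine.

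For (ii) we follow Lemma \ref{264131323}. Write $\phi_{x,x'}(\l)=\l-\frac{M_{\DD}(x)}{x-x'}\l^{-1}+(x-x')^{-1}R(x,\l)$. On $\Omega_1$ we have $\frac{-M_{\DD}(x)}{x-x'}\in(C_1,C_2)$ and $|x-x'|\sim|M_{\DD}(x)|$, so the remainder is $o(1)$ in $C^k$ by Lemma \ref{lem:RperturbMd}. Then $\pa_\l^2\phi_{x,x'}=2\frac{-M_{\DD}}{x-x'}\l^{-3}+o(1)>0$, and $\pa_\l\phi_{x,x'}$ changes sign at a point near $\sqrt{-M_{\DD}(x)/(x-x')}\in(\sqrt{C_1},\sqrt{C_2})$. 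The implicit function theorem therefore gives a unique nondegenerate critical point $\l_{x,x'}$. (If $\l_{x,x'}$ falls outside $\supp\g$, the expansion still holds, with main term zero.) The quantitative stationary phase theorem \cite[Theorem 7.7.5]{Hor} with large parameter $x-x'$ then yields the expansion with error $O(|x-x'|^{-1})$. The sign of the phase correction, $\mp\frac{i\pi}{4}$, follows from the sign of $\pa_\l^2\phi$ together with the sign of $x-x'$. On $\Omega$ we have $x-x'$ of opposite sign to $M_{\DD}$; if $M_{\DD}>0$ one conjugates, so the stated form presumably assumes $M_{\DD}(x_n)\to-\infty$ without loss of generality. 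We should also absorb the normalizing factor $(\pa_\l^2\phi)^{-1/2}$ into $b$; $|\pa_\l^2\phi|\sim1$.

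For the volume estimate, the condition $|\l_{x,x'}-\l_1|<\e'$ is essentially $x-x'\in -M_{\DD}(x)\cdot(\l_1\pm c\e')^{-2}$, an interval of length $\sim\e'|M_{\DD}(x)|\sim|M_{\DD}(x_n)|$ in $x'$. Replacing $x$ by $x_n$ changes $M_{\DD}$ by $O(1)$ and changes $\l_{x,x'}$ by $o(1)$, so for $x'$ in the middle half of that interval both $(x,x')$ and $(x_n,x')$ lie in $\Omega_1$ with the condition satisfied. We also need $x'\ge R_1$, which holds because $x'\sim x_n$. This gives the lower bound; the upper bound is immediate from the interval length. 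The main obstacle is making the perturbation of $R$ uniform in $x'$, which is why we use the choice of $x_n$ from Lemma \ref{2658032}.
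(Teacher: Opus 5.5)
Your proposal follows the paper's proof. For (i) you use the lower bound $|\pa_\l\Psi_{+,+}|\gtrsim|x-x'|+|M_{\DD}(x)|$ off $\Omega$ together with two integrations by parts. For (ii) you use convexity and a sign change of $\pa_\l\phi_{x,x'}$, then H\"ormander's Theorem 7.7.5 with parameter $|x-x'|$, and count volume through $\l_{x,x'}=\sqrt{-M_{\DD}(x)/(x-x')}+o(1)$. Your volume count is more careful than the paper's one-line comparison. Your observation that $|x-x_n|<1$ makes the regions with $|x|\le R_2$ vacuous for large $n$ is also correct.

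The one step you leave open is the subregion $\{|x|\ge R_2,\ |x'|\le R_1\}$ of (i), and it needs no special treatment. For large $n$ this region does not meet $\Omega$. Indeed $x-x'\ge x-R_1\gtrsim x$, while $|M_{\DD}(x)|\lesssim\max(\jap{x}^{1-\mu},\log\jap{x})=o(x)$. Hence $-M_{\DD}(x)/(x-x')<C_1$, so the point lies in $\Omega^c$, where your non-stationary phase argument already applies. This is exactly how the paper handles it, via $|x-x'|\gtrsim\max(|x|,|x'|)\gg|M_{\DD}(x)|$.

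You should discard the fallback you propose there. At a point with a genuine nondegenerate critical point the kernel has size $|x-x'|^{-1/2}$. No combination of the trivial bound and integration by parts could then give $(1+|x-x'|)^{-2}$.

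A smaller point: complex conjugation replaces the phase by $-\Psi_{+,+}$, which has the same critical-point equation $(x-x')+\l^{-2}M_{\DD}(x)=0$. So conjugation does not reduce the case $M_{\DD}(x_n)\to+\infty$ to $M_{\DD}(x_n)\to-\infty$. The clean way to cover both signs is to read $\Omega$ as $\{C_1<-M_{\DD}(x)/(x-x')<C_2\}$. The sign in front of $i\pi/4$ is then the sign of $x-x'$, as you note.
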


\begin{proof}

\noindent$(i)$ The estimate for $|x|,|x'|\leq R_2$ is obvious due to the compactness of $\supp \g$. Thus, we consider the other cases.

We show that for $R_2\gg R_1\gg 1$ and $0<C_1\ll 1\ll C_2$, we have
\begin{align}\label{eq:Psinonstest}
|\pa_{\l} \Psi_{ +, +} (x, x', \lambda)| \gtrsim |x-x'| + |M_{\DD} (x)|, \quad |\pa^{\alpha} _{\l} \Psi_{ +, +} (x, x', \lambda)| \lesssim |M_{\DD} (x)|
\end{align}
for any $\alpha \ge 2$ if $|x-x_n|<1$, $(x, x') \in \Omega^{c} \cup \{|x| \le R_1, |x'| \ge R_2 \} \cup \{|x| \ge R_2, |x'| \le R_1\}$ and $\l\in \supp \g$ with $n$ sufficiently large. To do this, we observe that $|M_{\DD}(x)|\lesssim \max(\jap{x}^{1-\m},\log\jap{x})$, which follows from \eqref{26641742} and the definition of $M_{\DD}(x)=\frac{1}{2}\int_0^xV(s)ds$. Moreover, we have $|M_{\DD}(x)|\ll |x-x'|$ for $(x,x')\in \Omega^c$ if $0<C_1\ll 1\ll C_2$ and $|x-x'|\gtrsim \max(|x|,|x'|)$ for $\{|x| \le R_1, |x'| \ge R_2 \} \cup \{|x| \ge R_2, |x'| \le R_1\}$ if $R_2\gg R_1\gg 1$. Thus
\begin{align*}
|\pa_{\l} \Psi_{ +, +} (x, x', \lambda)| \gtrsim |x-x'| + |M_{\DD} (x)|-|\pa_{\l}R(x,\l)|, \quad |\pa^{\alpha} _{\l} \Psi_{ +, +} (x, x', \lambda)| \lesssim |M_{\DD} (x)|+|\pa_{\l}^{\a}R(x,\l)|
\end{align*}
hold for such $\a$, $(x,x')$, and $\l$. On the other hand, we have $|\pa_{\l}^{\a}R(x,\l)|\ll |M_{\DD}(x)|$ if $|x-x_n|\leq 1$ and $\l\in \supp\g$ with sufficiently large $n$ by Lemma \ref{lem:RperturbMd}. Thus \eqref{eq:Psinonstest} follows. Now, the part $(i)$ is proved by \eqref{eq:b_+jsymbest} integrating by parts twice in the expression \eqref{eq:waveopdecom}. 

\vspace{1mm}
\noindent$(ii)$ We write the integral kernel as $W_{+, +, +, \psi}(x,x')=\int_{0}^{\infty} b_{+, +, \psi} (x, \lambda) e^{i(x-x')\phi_{x, x'} (\lambda)}d\lambda$. We see that $\phi_{x, x'} (\lambda) = \l - \frac{1}{\l} \frac{M_{\DD}(x)}{x-x'} + (x-x')^{-1}R(x,\l)$ holds in the notation of \eqref{eq:phasedecom}. Then, it is easy to check that if $R_1 \gg 1$, $0<C_1 \ll 1\ll C_2$, and $n\in\mathbb{N}$ is sufficiently large, we have
\begin{align*}
\pa^2 _{\l} \phi_{x, x'} (\lambda) >0, \quad \pa_{\l} \phi_{x, x'} (C_1) <0, \quad \pa_{\l} \phi_{x, x'} (C_2) >0, \quad |\pa^{\alpha} _{\l} \phi_{x, x'} (\lambda)| \lesssim 1
\end{align*}
uniformly in $(x, x') \in \Omega_1$ with $|x-x_n|<1$ and $\lambda \in \supp \psi$ by Lemma \ref{lem:RperturbMd}. Therefore the intermediate value theorem yields the unique existence of $\lambda_{x, x'} \in (C_1, C_2)$ such that $\pa_{\l} \phi_{x, x'} (\l_{x, x'}) =0$. By the stationary phase theorem \cite[Theorem 7.7.5]{Hor} with the large parameter $|x-x'|=-(x-x')$, we have
\begin{align*}
W _{+, +, +, \psi} (x, x')= (2\pi)^{-1}|x-x'|^{-\frac{1}{2}} b_{+, +,\g} (x, \l_{x, x'}) e^{i(x-x')\phi_{x, x'} (\l_{x, x'}) -\frac{i\pi}{4}}+ O(|x-x'|^{-1}),
\end{align*}
for $|x-x_n|<1$ and $(x,x')\in \Omega_1$, where the factor $-$ in front of $\frac{i\pi}{4}$ comes from $x-x'<0$.

To obtain the lower bound of the volume, we observe that the critical point $\l_{x,x'}$ satisfies
\begin{align*}
0=(\pa_{\l}\phi_{x,x'})(x,\l_{x,x'})=1+\l_{x,x'}^{-2}(x-x')M_{\DD}(x)+(x-x')^{-1}\pa_{\l}R(x,\l_{x,x'}).
\end{align*}
By Lemma \ref{lem:RperturbMd}, we have $\l_{x,x'}\sim \sqrt{-\frac{M_{\DD}(x)}{x-x'}}$ for $|x-x_n|<1$ with sufficiently large $n$. Using this and $|M_{\DD}(x)|\sim M_{\DD}(x_n)$, we have
\begin{align*}
&\VOL (\{ x' \in \R \mid  (x,x'), (x_n,x')\in \Omega_1,\,\, |\l_{x,x'}-\l_1|<\e'\})\\
&\sim \VOL (\{ x' \in \R \mid  (x,x')\in \Omega_1\})\sim |M_{\DD}(x)|\sim |M_{\DD}(x_n)|
\end{align*}
if $R_1$ is taken sufficiently large.
\end{proof}

Before going to the proof of Theorem \ref{264221222}, we need one more notation. For $S\subset \R^2$ and a linear operator $T$, we write its integral kernel by $T(x,x')$ and
\begin{align}\label{eq:intrest}
T^S(x,x'):=1_{S}(x,x')T(x,x')
\end{align}
and denote the corresponding operator by $T^{S}$, where we recall $1_S$ denotes the characteristic function.

\begin{proof}[Proof of Theorem \ref{264221222}]

If $W_{+}\g(P_0)$ was bounded on $L^p$, then $W_{+}\g(P_0)\f(D_x)$ must be bounded on $L^p$ for $\f\in C_c^{\infty}(\R)$ since $\f(D_x)\in B(L^p)$. Thus, it suffices to prove $W_{+}\g(P_0)\f(D_x)$ is not bounded on $L^p$ for some $\f$. Taking $\supp \f\subset (0,\infty)$ and $\f=1$ on $\supp \g(|\x|^2)$, we have $W_{+,+,\s_2,\g}\f(D_x)=W_{+,+,\s_2,\g}$ and $W_{+,-,\s_2,\g}\f(D_x)=0$. Thus, it suffices to construct a sequence $u_n\in L^p(\R)$ such that 
\begin{align*}
\|u_n\|_{L^p}\sim 1,\quad \sup_{n}\|W_{+,+,-,\g}u_n\|_{L^p}<\infty,\quad \lim_{n\to\infty}\|W_{+,+,+,\g}u_n\|_{L^p}=\infty.
\end{align*}

We take $u_n (x'):= |M_{\DD} (x_n)|^{-\frac{1}{p}} 1_{\Omega_1} (x_n, x') e^{-i(x_n-x')\phi_{x_n, x'} (\l_{x_n, x'})+\frac{i\pi}{4}}$, where $\Omega_1$ is defined in Proposition \ref{prop:p>2stphase} $(ii)$. By the definition of $\Omega$ in \eqref{eq:Omegadef} and $\Omega_1$, we have $\|u_n\|_{L^p}\sim 1$ uniformly in $n$. Moreover, similarly to the proof of Lemma \ref{264131246}, the non-stationary phase argument shows that $\sup_{n}\|W_{+,+,-,\g}u_n\|_{L^p}<\infty$ holds. It remains to prove $\|W_{+,+,+,\g}u_n\|_{L^p(|x-x_n|<\e)}\to \infty$ as $n\to \infty$ for sufficiently small $\e>0$. 

Set $\Omega_2:=\{(x,x')\in \Omega\mid x< -R_1,\,\, x'< -R_1 \}$ with $R_1$ appearing in Proposition \ref{prop:p>2stphase}. By the definition of $\Omega_1$, we have $\supp u_n\subset \R_{>0}$ and hence $W_{+,+,+,\g}^{\Omega_2}u_n(x)=0$. Since the subset $(\Omega_1\cup \Omega_2)^c$ is contained in the set appearing in Proposition \ref{prop:p>2stphase} $(i)$, Young's inequality shows that $\sup_n\|W_{+,+,+,\g}^{(\Omega_1\cup \Omega_2)^c}u_n\|_{L^p(|x-x_n|<\e)}<\infty$. Combining them with the decomposition $W_{+,+,+,\g}=W_{+,+,+,\g}^{\Omega_1}+W_{+,+,+,\g}^{\Omega_2}+W_{+,+,+,\g}^{(\Omega_1\cup \Omega_2)^c}$, we only need to prove $\|W_{+,+,+,\g}^{\Omega_1}u_n\|_{L^p(|x-x_n|<\e)}\to \infty$ as $n\to \infty$.

To accomplish this, we write by Proposition \ref{prop:p>2stphase} $(ii)$
\begin{align*}
W^{\Omega_1} _{+, +, +, \psi} u_n (x) =(2\pi)^{-1} |M_{\DD} (x_n)|^{-\frac{1}{p}} \int_{A_{x, n}} (x-x')^{-\frac{1}{2}} b_{+, +,\g} (x, \l_{x, x'}) e^{i\mathcal{P} (n, x, x')} dx'+O(|M_{\DD} (x_n)|^{-\frac{1}{p}})
\end{align*}
if $|x-x_n|<1$, where 
\begin{align*}
\mathcal{P} (n, x, x'):=(x-x')\phi_{x, x'} (\l_{x, x'}) - (x_n-x')\phi_{x_n, x'} (\l_{x_n, x'}),\,\, A_{x, n} := \{x' \in \R \mid (x, x'), (x_n, x') \in \Omega_1\},
\end{align*}
and we have used $\VOL(A_{x,n})\lesssim |M_{\DD}(x_n)|$.

We show that $|\mathcal{P} (n, x, x')| \lesssim \epsilon$ holds when $x' \in A_{x,n}$ and $|x-x_n| < \epsilon$ hold. To do this, we recall $\phi_{x, x'} (\lambda) = \l - \frac{1}{\l} \frac{M_{\DD}(x)}{x-x'} + (x-x')^{-1}R(x,\l)$ from Proposition \ref{prop:p>2stphase} and \eqref{eq:phasedecom} and perform a calculation as
\begin{align*}
\mathcal{P} (n, x, x')
&= (x-x_n) \phi_{x, x'} (\l_{x, x'}) + (x_n-x') (\phi_{x, x'} (\l_{x, x'}) -\phi_{x_n, x'} (\l_{x, x'})) \\
&\quad \quad + (x_n-x') (\phi_{x_n, x'} (\l_{x, x'}) -\phi_{x_n, x'} (\l_{x_n, x'})).
\end{align*}
The first term can be estimated as $|(x-x_n) \phi_{x, x'} (\l_{x, x'})| \lesssim \epsilon$ if $(x, x'), (x_n, x') \in \Omega_1$ and $|x-x_n| < \epsilon$. Moreover we have $|\pa_x \phi_{x, x'} (\lambda)| \lesssim |x-x'|^{-1}$ for $\l \in \supp \psi$ and $(x, x') \in \Omega_1$, which yields
\begin{align*}
|(x_n-x') (\phi_{x, x'} (\l_{x, x'}) -\phi_{x_n, x'} (\l_{x, x'}))| \lesssim |x_n-x'| \left|\int_{x_n}^{x} |s-x'|^{-1} ds\right| \lesssim \epsilon
\end{align*}
as long as $(x_n, x') \in \Omega_1$ and $|x-x_n| < \epsilon$. Concerning the third term, using the relation
\begin{align*}
(\pa_{\l}\pa_x \phi_{x, x'}) (\l_{x, x'}) + (\pa^2 _{\l} \phi_{x, x'}) (\l_{x, x'}) \pa_{x} \l_{x, x'} =0,
\end{align*}
we obtain $|\pa_{x} \l_{x, x'}| \le |(\pa_{\l}\pa_x \phi_{x, x'}) (\l_{x, x'})| \cdot |(\pa^2 _{\l} \phi_{x, x'}) (\l_{x, x'})|^{-1} \lesssim |x-x'|^{-1}$ for $(x, x') \in \Omega_1$, which yields
\begin{align*}
|(x_n-x') (\phi_{x_n, x'} (\l_{x, x'}) -\phi_{x_n, x'} (\l_{n, x'}))| \lesssim |x'-x_n| |\l_{x, x'} -\l_{x_n, x'}| \lesssim  |x'-x_n| \left| \int_{n}^{x} |s-x'|^{-1} ds\right| \lesssim \epsilon
\end{align*}
if $(x_n, x') \in \Omega_1$ and $|x-x_n| < \epsilon$. In this way, $|\mathcal{P} (n, x, x')| \lesssim \epsilon$ is proved.

Now, we take $0<\e\ll1$ and $n\gg 1$ so that $\mathrm{Re}\, (e^{i\mathcal{P}(n,x,x')}b_{+,+,\g}(x,\l_{x,x'}))>\frac{1}{2}\g(\l_{x,x'}^2)$ for $x' \in A_{x,n}$ and $|x-x_n| < \epsilon$, which is possible by \eqref{eq:b++--asymp} and $|\mathcal{P} (n, x, x')| \lesssim \epsilon$. Taking $\e'>0$ sufficiently small so that $\g(\l^2)\gtrsim 1$ for $|\l-\l_1|<\e'$, we obtain
\begin{align*}
&|W^{\Omega_1} _{+, +, +, \psi} u_n (x)|\geq \mathrm{Re}\, W^{\Omega_1} _{+, +, +, \psi} u_n (x)\\
&\gtrsim \frac{1}{2}|M_{\DD} (x_n)|^{-\frac{1}{2}-\frac{1}{p}}\VOL(A_{x,n}\cap \{x'\in \R\mid |\l_{x,x'}-\l_1|<\e'\})+O(|M_{\DD} (n)|^{-\frac{1}{p}})\\
&\gtrsim \frac{1}{2}|M_{\DD} (x_n)|^{\frac{1}{2}-\frac{1}{p}}
\end{align*}
when $|x-x_n|<\e$ and $n\gg 1$, where we have used the volume bound in Proposition \ref{prop:p>2stphase} $(ii)$. Consequently, we obtain $\|W^{\Omega_1} _{+, +, +, \psi} u_n\|_{L^p(|x-x_n|)}\gtrsim |M_{\DD} (x_n)|^{\frac{1}{2}-\frac{1}{p}}\to \infty$ as $n\to \infty$.
This completes the proof.
\end{proof}

\section{The modified wave operators in the low energy regime}\label{2508241543}

Throughout of this section, we assume \eqref{26641742} and
\begin{align}\label{eq:Vneg}
0<\mu<2,\quad  -V(x)\gtrsim \jap{x}^{-\mu}.
\end{align}

\subsection{Isozaki--Kitada modifiers and modified wave operators in the low energy regime}\label{subsection:lowIsoKita}
Here, we introduce modified wave operators adapted to the study in the low energy regime. The construction here is closely related to the one by Derezi\'nski--Skibsted \cite{DS}.
Let $y(x,\l)$ and $\phi_{\pm}(x,\x)$ as in  \eqref{eq:yphidef} which are well-defined for $x\in \R$ and $\x\in \R\setminus \{0\}$ due to the assumption \eqref{eq:Vneg}\footnote{In general, $\phi_{\pm}$ is discontinuous at $\x=0$.}. Let $\overline{\chi}\in C^{\infty}(\R;[0,1])$ such that $\overline{\chi}(x)=1$ for $x\geq 2$ and $\overline{\chi}(x)=0$ for $x\leq 1$. We define $a_{\pm}(x,\x):=\overline{\chi}(\pm x/R_0)1_{\R_{> 0}}(\x)+\overline{\chi}(\mp x/R_0)1_{\R_{< 0}}(\x)$ for a fixed $R_0\gg 1$ and the Isozaki--Kitada modifier $J_{\pm}$ by the expression \eqref{eq:Jpmdef}, which can be understood as linear continuous operators from $\mathcal{S}(\R)$ to $\mathcal{S}'(\R)$. We note that $J_{\pm}f\in L^2(\R)$ for $\hat{f}\in C_c^{\infty}(\R\setminus \{0\})$. 
Similarly to \cite[Theorem 6.1]{DS}, the limits
\begin{align*}
W_{\pm}f:=\lim_{t\to \pm\infty}e^{itP} J_{\pm} e^{-itP_0}f\quad (\hat{f}\in C_c^{\infty}(\R\setminus \{0\}))
\end{align*}
exist and define\footnote{We note that the discontinuity of $\phi_{\pm}$ and $a_{\pm}$ is nothing to do with the existence and the asymptotic completeness of $W_{\pm}$ since $\x=0$ corresponds to the zero energy of $P_0$, whose Lebesgue measure is zero.} the isometries in $L^2(\R)$ and the ranges of $W_{\pm}$ are the absolutely continuous subspace of $P$. The operators $W_{\pm}$ are called \textbf{modified wave operators}. 

\begin{remark}
We note that the modified wave operators $W_{\pm}$ introduced here are not the same as \eqref{eq:IKmodwave} on $L^2(\R)$ but coincide with them on $\mathrm{Ran}~E_{P_0}([\sqrt{2}\l_0,\infty))$. Therefore, we continue to use the same notation.
\end{remark}

\subsection{Relationship to the modified wave operators by Derezi\'nski--Skibsted}\label{eq:sevwavops2}

Let us compare our modified operators and the ones constructed by Derezi\'nski--Skibsted \cite{DS} when $-V(x)\gtrsim \jap{x}^{-\mu}$ and $V$ is spherically symmetric, that is, $V$ is an even function in the one-dimensional case.

\begin{lemma}
Suppose that $V$ is spherically symmetric, that is $V$ is an even function. For each $1\leq p\leq \infty$, the $L^p$-boundedness of $W_{\pm}$ and that of the Derezi\'nski--Skibsted modified wave operators $W_{\pm,\mathrm{DS}}$ is equivalent.

\end{lemma}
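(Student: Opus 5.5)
We plan to prove the equivalence by establishing an explicit relation of the form
\begin{align*}
W_{\pm}\g_{\sigma}(D_x)=W_{\pm,\mathrm{DS}}\g_{\sigma}(D_x)\,e^{iC_{\pm}(D_x)},\qquad \sigma\in\{\pm\},
\end{align*}
where $\g_{\pm}=1_{\R_{\gtrless 0}}$ split frequencies into positive and negative halves and $C_{\pm}$ is a real phase that depends only on $|\x|$ because $V$ is even. Evenness makes the two half-line constants coincide; compare Remark (1) after Lemma \ref{lem:waveL^pequiv}, where $A_+=A_-$. Consequently $e^{iC_{\pm}(D_x)}$ is a unitary Fourier multiplier that commutes with the frequency splitting, and we would get $W_{\pm}=W_{\pm,\mathrm{DS}}e^{iC_{\pm}(D_x)}$ on all of $L^2$.

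The first step is to recall the Derezi\'nski--Skibsted construction. In \cite{DS}, the low energy modifier is an Isozaki--Kitada type operator whose phase solves the same eikonal equation $|\pa_x\phi|^2+V=\x^2$, but it is normalized differently. Typically the phase is $\int_{R}^{x}\sqrt{\x^2-V(s)}\,ds$ plus a $\x$-dependent constant, or is fixed by its asymptotics at $\x=0$. In one dimension both phases are therefore of the form $\pm y(x,\x)+c(\x)$ on each half-line $\pm x>0$. The difference $C_{\pm}(\x)$ is exactly that constant, and it is the same on both half-lines because $V$ is even. Substituting into the definitions, $J_{\pm,\mathrm{DS}}=J_{\pm}e^{iC_{\pm}(D_x)}$ up to cutoff differences. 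The cutoffs are supported near spatial infinity, and their differences are compact in $x$, so they do not affect the strong limits by local compactness, exactly as in the Remark following \eqref{eq:Jpmdef}. Because $e^{iC_{\pm}(D_x)}$ commutes with $e^{-itP_0}$, taking limits gives the identity above, first on $\{\hat f\in C_c^\infty(\R\setminus\{0\})\}$ and then by density.

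The second step is the transfer of $L^p$ bounds. If the phase $C_{\pm}(\x)$ is constant, the conclusion is immediate. In general, $C_{\pm}$ must be either a constant or a symbol. Its behaviour at $\x\to\infty$ is handled as in Lemma \ref{lem:waveL^pequiv}(i), using \cite[\S7, Lemma 5.2]{Tay}. The real difficulty is $\x\to 0$, where $C_{\pm}(\x)$ may blow up like $\int_0^{\cdot}|V|^{1/2}$-type quantities, and the multiplier $e^{iC(D_x)}$ need not be $L^p$ bounded.

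The main obstacle is therefore to check that, after the normalization used in \cite{DS} (they fix the phase at the origin $x=0$, as we do with $y(x,\x)=\int_0^x$), the constant actually vanishes, or is $\x$-independent. I would first try to verify directly that the DS phase, restricted to the one-dimensional even case, equals $\pm y(x,\x)$ exactly. In that case $W_{\pm}=W_{\pm,\mathrm{DS}}$ (possibly up to a unimodular constant), and the equivalence is trivial for every $p\in[1,\infty]$. Only if this fails would I fall back on the multiplier estimate above.
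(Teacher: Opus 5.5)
Your first step, reducing the lemma to a multiplier identity $W_{+}=W_{+,\mathrm{DS}}e^{iA(D_x)}$ with $A$ depending only on $|\x|$ because $V$ is even, is exactly what the paper does. The gap is what comes after. Your hoped-for shortcut is false: Derezi\'nski--Skibsted do not normalize their phase at $x=0$. In one dimension their phase is $\phi_{+,\mathrm{DS}}(x,\x)=|\x|R_0+\int_{R_0}^{|x|}\sqrt{\x^2-V(s)}\,ds$ for $|x|\ge R_0$, $\sgn(x\x)=1$, so
\[
A(\x)=\phi_+(x,\x)-\phi_{+,\mathrm{DS}}(x,\x)=\int_0^{R_0}\frac{-V(s)}{|\x|+\sqrt{\x^2-V(s)}}\,ds ,
\]
which genuinely depends on $\x$. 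The "trivial case" therefore never occurs. The entire content of the lemma is the $L^p$-boundedness of $e^{\pm iA(D_x)}$ for every $p\in[1,\infty]$, and your proposal does not prove this; it even leaves open that it might fail. Your diagnosis of the obstruction is also off. $A$ does not blow up at $\x=0$, since $A(0)=\int_0^{R_0}\sqrt{-V(s)}\,ds<\infty$. The real issue is that $A(\x)=K(\x^2)-R_0|\x|$ with $K$ smooth (because $-V>0$ on $[0,R_0]$). So $e^{iA}$ is Lipschitz but has a $|\x|$-type kink at the origin.

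Here is what must be supplied. At high frequency, $|\pa_{\x}^{\a}(e^{iA(\x)}-1)|\lesssim\jap{\x}^{-1-\a}$ together with the symbol lemma used in Lemma~\ref{lem:waveL^pequiv}~$(i)$ handles $(e^{iA(D_x)}-I)(I-\g(D_x))$, as you anticipated. At low frequency, the bound $|\pa_{\x}^{\a}e^{iA(\x)}|\lesssim|\x|^{-\a}$ for $0<|\x|\le 1$ gives $1<p<\infty$ by Mikhlin. For $p=1$ and $p=\infty$, however, you need $\mathcal{F}[e^{iA}\g]\in L^1(\R)$. The paper gets this by one integration by parts. It bounds the term without $A'$ by $|x|^{-2}$, and the term containing $A'$ (which jumps at $0$) by $|x|^{-1-\epsilon}$, using the Kato--Ponce inequality and a fractional-derivative estimate for $\tilde{\g}(\x)\x/|\x|$. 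A more elementary way to close your gap: since $e^{-iR_0|\x|}=\cos(R_0\x)-i|\x|\frac{\sin(R_0\x)}{\x}$, you can write $e^{iA(\x)}\g(\x)=g_1(\x)+|\x|g_2(\x)$ with $g_1,g_2\in C_c^{\infty}(\R)$. Then $\mathcal{F}[|\x|g_2]$ is bounded, and it is $O(|x|^{-2})$ by integrating by parts twice on each half-line. Hence the convolution kernel lies in $L^1$, and Young's inequality gives boundedness for all $p\in[1,\infty]$.
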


\begin{proof}
We only deal with $W_+$. As in the formula \cite[(3.9)]{DS}, they use the phase function\footnote{The expression here is slightly different from the one in \cite[(3.9)]{DS} since their Hamiltonian is $-\frac{1}{2}\pa_x^2+V(x)$ instead of $-\pa_x^2+V(x)$.} as $\phi_{+,\mathrm{DS}}(x,\x)=|\x|R_0+\int_{R_0}^{|x|}\sqrt{\x^2-V(s)}ds $ for $|x|\geq R_0$ with $\sgn(x\x)=1$, where $R_0\geq 1$ is a fixed constant. Moreover, $\phi_{+}(x,\x)=\int_0^{|x|}\sqrt{\x^2-V(s)}ds$ for $\sgn(x\x)=1$ due to the evenness of $V$. Therefore,
\begin{align*}
\phi_+(x,\x)-\phi_{+,\mathrm{DS}}(x,\x)=-|\x|R_0+\int_0^{R_0}\sqrt{\x^2-V(s)}ds=\int_0^{R_0}\frac{-V(s)}{|\x|+\sqrt{\x^2-V(s)}}=:A(\x)
\end{align*}
when $\sgn(x\x)=1$ and the relationship between $W_+$ and their modified wave operator $W_{+,\mathrm{DS}}$ is given by
\begin{align*}
W_+=W_{+,\mathrm{DS}}e^{iA(D_x)}.
\end{align*}
Thus, it suffices to prove $e^{iA(D_x)}$ is bounded on $L^p$.

We observe 
\begin{align}\label{eq:DScompsymbol}
|\pa_{\x}^{\a}e^{iA(\x)}|\lesssim |\x|^{-\a}\,\, \text{for  $0<|\x|\leq 1$},\quad |\pa_{\x}^{\a}(e^{iA(\x)}-1)|\lesssim \jap{\x}^{-1-\a}\,\, \text{for $|\xi |\geq 1$}.
\end{align}
Let $\g\in C_c^{\infty}(\R)$ such that $\g(\x)=1$ for $|\x|\leq 1/2$ and $\g(\x)=0$ for $|\x|\geq 1$. We write $e^{iA(D_x)}(I-\g(D_x))=(e^{iA(D_x)}-I)(I-\g(D_x))+I-\g(D_x)$. By the second estimate in \eqref{eq:DScompsymbol} and \cite[\S7, Lemma 5.2]{Tay}, $(e^{iA(D_x)}-I)(I-\g(D_x))$ and $\g(D_x)$ are bounded on $L^p$. Since the identity operator $I$ is bounded on $L^p$, the $L^p$-boundedness of $e^{iA(D_x)}(I-\g(D_x))$ follows. It remains to show that $e^{iA(D_x)}\g(D_x)$ is bounded on $L^p$.

By Young's inequality, we have only to show $\mathcal{F} [e^{iA(\xi)} \psi (\xi)] \in L^1 (\R)$. By the support property of $\psi$, we know $\mathcal{F} [e^{iA(\xi)} \psi (\xi)] \in L^{\infty} (\R)$. Next, integration by parts yields
\begin{align*}
\mathcal{F} [e^{iA(\xi)} \psi (\xi)] (x) = \frac{1}{ix} \int_{\R} ie^{iA(\xi)} \psi (\xi) A^{'} (\xi) e^{-ix\xi} d\xi + \frac{1}{ix} \int_{\R} e^{iA(\xi)} \psi^{'} (\xi) e^{-ix\xi}d\xi =: \mathrm{I}(x)+\mathrm{II}(x).
\end{align*}
Since $A^{'} (\xi)$ is bounded for $\xi \in \R \setminus \{0\}$, we have $|\mathrm{II} (x)| \lesssim |x|^{-2}$ by integrating by parts once again. Concerning $\mathrm{I}(x)$, we calculate, with $\epsilon \in (0, \frac{1}{10})$,
\begin{align*}
|x|^{1+\epsilon} |\mathrm{I} (x)| = |x|^{\epsilon} |\mathcal{F} [e^{iA(\xi)} \psi (\xi) A^{'} (\xi)] (x)| \lesssim \||D_{\xi}|^{\epsilon} (e^{iA(\xi)} \psi (\xi) A^{'} (\xi))\|_{L^1}.
\end{align*}
By the Kato--Ponce inequality \cite[Theorem 1.1]{G3}, we obtain
\begin{align*}
\||D_{\xi}|^{\epsilon} (e^{iA(\xi)} \psi (\xi) A^{'} (\xi))\|_{L^1} &\lesssim \||D_{\xi}|^{\epsilon} (e^{iA(\xi)} \psi (\xi))\|_{L^2} \|\tilde{\psi}A^{'}\|_{L^2} + \|e^{iA(\xi)} \psi (\xi)\|_{L^2} \||D_{\xi}|^{\epsilon} (\tilde{\psi}A^{'})\|_{L^2} \\
&\lesssim \||D_{\xi}|^{\epsilon} (e^{iA(\xi)} \psi (\xi))\|_{L^2} + \||D_{\xi}|^{\epsilon} (\tilde{\psi}A^{'})\|_{L^2},
\end{align*}
where $\tilde{\psi} \in C^{\infty} _c (\R)$ satisfies $\tilde{\psi} (\xi) =1$ on the support of $\psi$. Using $\|e^{iA} \psi\|_{L^2} + \||D_{\xi}| (e^{iA} \psi)\|_{L^2} < \infty$ and an interpolation argument, we know the first term on the RHS is finite. Concerning the second term, we note $A'(\x)=-R_0\x/|\x|+\int_0^{R_0}\x/\sqrt{\x^2-V(s)}ds$. Since $\int_{0}^{R_0} (\xi/\sqrt{\xi^2 -V(s)}) ds$ is smooth, we have $|D_{\xi}|^{\epsilon} (\tilde{\psi} (\xi) \int_{0}^{R_0} (\xi/\sqrt{\xi^2 -V(s)}) ds) \in L^2$.  Moreover, $|D_{\xi}|^{\epsilon} (\tilde{\psi} (\xi) (\xi/|\xi|)) \in L^2$ due to the pointwise estimate $||D_{\xi}|^{\epsilon} (\tilde{\psi} (\xi) (\xi/|\xi|)) (x)| \lesssim |x|^{-\epsilon} \jap{x}^{-1}$, which can be checked by an elementary calculation using the singular integral expression of fractional derivatives \cite[(3.1)]{Di} (and also by \cite[Proposition 3.3]{Di}). Hence $|\mathrm{I} (x)| \lesssim |x|^{-1-\epsilon}$ and we obtain $\mathcal{F} [e^{iA(\xi)} \psi (\xi)] \in L^1 (\R)$.   
\end{proof}

%\begin{remark}
%It is plausible that the operator $e^{iA(D_x)}\g(D_x)$ is also bounded on $L^1$ and $L^{\infty}$ in our case; however, we do not pursue it here.
%\end{remark}

\subsection{Oscillatory integral representation in the low energy regime}

Similarly to Subsection \ref{subsection:Jost}, let $u_{\pm}(x,\l)$ be the Jost solutions to the equation \eqref{eq:stationarySch}, which satisfy the asymptotic expansion of the form \eqref{eq:upasymp}. The main difference from that in Subsection \ref{subsection:Jost} is that $u_{\pm}(x,\l)$ can be defined for all $x\in \R$ and $\l\geq 0$ due to the additional assumption \eqref{eq:Vneg}. We also recall that $\mathrm{Wr}(\lambda)= u_{+} (x, \lambda) \partial_{x} u_{-} (x, \lambda) - \partial_{x} u_{+} (x, \lambda) u_{-} (x, \lambda)$ is the Wronskian which is also well-defined for $\l\geq 0$. Since $u_{\pm}(x,\l)$ is continuous for $\l\geq 0$, $\mathrm{Wr}(\lambda)$ is also continuous for $\l\geq 0$.

As in Proposition \ref{prop:waveexpression1} in the middle and high energy regimes, the integral kernel of the modifier wave operator $W_{+}$ has the following expression in the low energy regime:

\begin{proposition}\label{prop:waveexpression2}
Let $\g\in C_c^{\infty}(\R)$. Then we have
\begin{align}\label{eq:waveopdecom2}
W_{+}\g(P_0)=\sum_{\s_1,\s_2\in\{\pm\}}W_{+,\s_1,\s_2},\quad W_{+,\s_1,\s_2}(x,x')=\frac{1}{2\pi}\int_0^{\infty} b_{\s_1,\s_2}(x,\l)\g(\l^2)e^{i(\s_2y(x,\l) -\s_1\l x')}d\l,
\end{align}
where $b_{\s_1,\s_2}$ satisfies $|\pa_{\l}^{\a}b_{\s_1,\s_2}(x,\l)|\lesssim \l^{-\a}$ uniformly in $x\in \R$ and $\l>0$ for $\a=0,1,2$. Moreover,
\begin{align}\label{eq:b++asymp2}
b_{+,+}(x,\l)= 2i\l^{\frac{1}{2}}(-V(x))^{-\frac{1}{4}}(\overline{\mathrm{Wr}(0)})^{-1}\left(1+o(1)\right)\quad \text{as $\l\to 0$, $x\to -\infty$ with $\l=o(\jap{x}^{-\mu/2})$ }
\end{align}
and
\begin{align}\label{eq:lowb+-imp}
|b_{+,-}(x,\l)|\lesssim \l^{\frac{1}{2}}\jap{x}^{\frac{\m}{4}+\m-2}
\end{align}
uniformly in $x<0$ and $\l>0$. 

\end{proposition}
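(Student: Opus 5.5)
We follow the proof of Proposition \ref{prop:waveexpression1} almost verbatim, now for all $\l>0$. The starting point is the wave matrix formula
\begin{align*}
(W_+(\l){\bm c})(x)=\frac{2i\l^{\frac{1}{2}}}{(2\pi)^{\frac{1}{2}}\overline{\mathrm{Wr}(\l)}}\left(\overline{u_-(x,\l)}c_1+\overline{u_+(x,\l)}c_2\right),\quad \l>0.
\end{align*}
We prove it exactly as \eqref{eq:wavematformula}. Both sides solve \eqref{eq:stationarySch}, and we compare $((\sgn x)\pa_x+i\l)$ of both sides as $|x|\to\infty$. On the left we use the explicit form of $J_+(\l)$ for $|x|\ge 2R_0$ (now $a_+$ uses the indicator $1_{\R_{>0}}(\x)$). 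On the right we use the low-energy Jost asymptotics from \cite[Theorem 3.11]{HT}, including the identity $\mathrm{Wr}(\l)=-2i\tilde a_{+,+}(-\infty,\l)$. The term $R(\l-i0)T_+(\l){\bm c}$ is again outgoing, since $t_+$ is supported where $\pa_x a_+\neq0$, i.e.\ compactly in $x$, up to the $\pa_x^2\phi_+$ term which decays like $\jap{x}^{-1-\mu/2}$. Plugging this into the stationary representation \eqref{eq:wavestrep}, which holds for $\g\in C_c^\infty((0,\infty))$, and then extending by a density/limiting argument to $\g\in C_c^\infty(\R)$, the kernel becomes \eqref{eq:waveopdecom2}. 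Here
\begin{align*}
b_{\s_1,\s_2}(x,\l)=\frac{2i\l^{\frac12}}{\overline{\mathrm{Wr}(\l)}(\l^2-V(x))^{\frac14}}\overline{\tilde a_{-\s_1,-\s_2}(x,\l)}.
\end{align*}

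The symbol bounds $|\pa_\l^\a b_{\s_1,\s_2}|\lesssim\l^{-\a}$ for $\a\le2$ follow from the uniform low-energy estimates in \cite[Theorem 3.11]{HT}. These are: $|\pa_\l^\a\tilde a_{\s_1,\s_2}|\lesssim\l^{-\a}$ uniformly for $\l>0$; $|\mathrm{Wr}(\l)|\sim1$ with $|\pa_\l^\a\mathrm{Wr}|\lesssim\l^{-\a}$ (zero is regular); and the elementary bound
\begin{align*}
\left|\pa_\l^\a\left[\l^{\frac12}(\l^2-V(x))^{-\frac14}\right]\right|\lesssim\l^{-\a},
\end{align*}
which holds because $\l^{1/2}(\l^2-V)^{-1/4}\le1$ and each $\l$-derivative costs at most $\l^{-1}$. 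The factor $\l^{1/2}$ is harmless even though $(-V)^{-1/4}\sim\jap{x}^{\mu/4}$ grows, since $\l^2-V\ge\l^2$.

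For \eqref{eq:b++asymp2}, we use $\tilde a_{-,-}(x,\l)\to1$ as $x\to-\infty$ and continuity of $\mathrm{Wr}$ at $0$. We also use $(\l^2-V(x))^{-1/4}=(-V(x))^{-1/4}(1+o(1))$ when $\l^2=o(\jap{x}^{-\mu})$, i.e.\ $\l=o(\jap{x}^{-\mu/2})$, which gives $\l^2\ll -V(x)$. The delicate point is uniformity of $\tilde a_{-,-}(x,\l)\to1$ jointly as $\l\to0$. For this we use the Liouville-variable construction of \cite[\S3]{HT}, where the error is an integral of the perturbation $W(y,\l)$ from $-\infty$ to $y(x,\l)$, which is small uniformly in $\l$.

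The main obstacle is \eqref{eq:lowb+-imp}, the quantitative decay of the reflected coefficient $\tilde a_{+,-}(x,\l)$ for $x<0$. Here $\tilde a_{+,-}$ corresponds to $\tilde a_{-,+}$ after conjugation, i.e.\ $\overline{\tilde a_{-,+}}$, which vanishes at $-\infty$. The plan is to integrate the system \eqref{eq:vectorweq} from $-\infty$, giving
\begin{align*}
|a_{-,+}(y,\l)|\lesssim\int_{-\infty}^{y}|W(s,\l)|\,ds=\int_{-\infty}^{x}|U(t,\l)|\,\pa_t y(t,\l)\,dt.
\end{align*}
We then bound this using $|U|\lesssim\jap{t}^{-2-\mu}(\l^2-V)^{-1}\lesssim\jap{t}^{\mu-2-\mu}=\jap{t}^{-2}$ and $\pa_ty=\sqrt{\l^2-V}$. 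Combining with the prefactor $\l^{1/2}(\l^2-V)^{-1/4}$ requires care. The target exponent $\frac{\mu}{4}+\mu-2$ suggests extracting $\l^{1/2}$ and $(-V)^{-1/4}\lesssim\jap{x}^{\mu/4}$ from the prefactor, with the rest coming from
\begin{align*}
\int_{-\infty}^x\jap{t}^{-2-\mu}(-V)^{-1/2}\,dt\lesssim\jap{x}^{\mu/2-1-\mu}.
\end{align*}
If this falls short of $\jap{x}^{\mu-2}$, we would fall back on the sharper terms of $U$ (each involves $V''$ or $(V')^2/(\l^2-V)$), giving $|U|\lesssim\jap{t}^{-2-\mu}\cdot\jap{t}^{2\mu}$ at $\l=0$. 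Integrating against $\pa_ty\lesssim\jap{t}^{-\mu/2}$ then yields a power $\jap{x}^{\mu/2-1}$ or better; $\jap{x}^{\mu-2}$ would come from an extra integration by parts exploiting the oscillation $e^{2is}$ in $B$, as in \cite{HT}. This oscillatory integration-by-parts step is where I expect the real work to lie.
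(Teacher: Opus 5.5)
Your overall route is the paper's: the wave-matrix formula via the radiation condition, the same $b_{\s_1,\s_2}$, and the uniform low-energy bounds of \cite[Theorem 3.11]{HT} for the symbol estimates. For \eqref{eq:b++asymp2} you also use, as the paper does, $\tilde a_{-,-}(x,\l)\to 1$ as $x\to-\infty$ uniformly in $\l\ge 0$, together with continuity of $\mathrm{Wr}$ at $0$.

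The gap is \eqref{eq:lowb+-imp}. Since $b_{+,-}$ contains $\overline{\tilde a_{-,+}}$ and $\l^{1/2}(\l^2-V)^{-1/4}\le \l^{1/2}(-V)^{-1/4}\lesssim \l^{1/2}\jap{x}^{\mu/4}$, what is needed is exactly $|\tilde a_{-,+}(x,\l)|\lesssim \jap{x}^{\mu-2}$ for $x<0$, uniformly in $\l>0$. The paper does not prove this; it quotes it from \cite[(3.31)]{HT}. Your proposal does not establish it.

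Your first computation uses $|U|\lesssim \jap{t}^{-2-\mu}(\l^2-V)^{-1}$, which is false. The correct bound is $|U(t,\l)|\lesssim \jap{t}^{-2-\mu}(\l^2-V(t))^{-2}\lesssim \jap{t}^{\mu-2}$. With it, the Gronwall bound $\int_{-\infty}^{x}|U|\,\pa_t y\,dt\lesssim \int_{-\infty}^{x}\jap{t}^{\frac{\mu}{2}-2}dt$ only gives $|\tilde a_{-,+}|\lesssim \jap{x}^{\frac{\mu}{2}-1}$. Hence $|b_{+,-}|\lesssim \l^{1/2}\jap{x}^{\frac{3\mu}{4}-1}$, which is strictly weaker than \eqref{eq:lowb+-imp} because $\mu-2<\frac{\mu}{2}-1$. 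The oscillatory integration by parts you name is the right mechanism, but it is exactly the step you leave undone.

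To close it, either cite \cite[(3.31)]{HT} or carry it out. From \eqref{eq:vectorweq} one finds
\begin{align*}
\pa_y a_{-,+}=-\frac{i}{2}W\left(a_{-,+}+e^{-2iy}a_{-,-}\right),\quad a_{-,+}(-\infty,\l)=0.
\end{align*}
Remove the first term with the unimodular factor $G(y)=\exp\bigl(\frac{i}{2}\int_{-\infty}^{y}W\,ds\bigr)$. Then integrate $W G a_{-,-}e^{-2iy}$ by parts once from $-\infty$.
\begin{itemize}
\item The boundary term is $O(|U(x,\l)|)=O(\jap{x}^{\mu-2})$.
\item The remainder is bounded by $\int_{-\infty}^{x}|\pa_t U|\,dt+\int_{-\infty}^{y}|W|^2\,ds\lesssim \jap{x}^{\mu-2}+\jap{x}^{\frac{3\mu}{2}-3}$, uniformly in $\l>0$.
\item This uses $|\pa_x U|\lesssim \jap{x}^{\mu-3}$, $|\pa_y a_{-,-}|\lesssim |W|$ and $\mu<2$.
\end{itemize}
This gives $|\tilde a_{-,+}(x,\l)|\lesssim \jap{x}^{\mu-2}$ for $x<0$, and hence \eqref{eq:lowb+-imp}.
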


\begin{remark}

Similarly, we also have $|b_{-,+}(x,\l)|\lesssim \l^{\frac{1}{2}}\jap{x}^{\frac{\m}{4}+\m-2}$ for $x>0$ and
\begin{align*}
b_{-,-}(x,\l)= 2i\l^{\frac{1}{2}}(-V(x))^{-\frac{1}{4}}(\overline{\mathrm{Wr}(0)})^{-1}(1+o(1))\quad \text{as $\l\to 0$, $x\to \infty$ with $\l=o(\jap{x}^{-\mu/2})$ }.
\end{align*}

\end{remark}

\begin{proof}
The proof is almost same as in Proposition \ref{prop:waveexpression1} by setting
\begin{align*}
&b_{\s_1,\s_2}(x,\l)=\frac{2i\l^{\frac{1}{2}}}{\overline{\mathrm{Wr}(\l)}(\l^2-V(x))^{\frac{1}{4}}}\overline{\tilde{a}_{\sigma_1',\sigma_2'}(x,\l)},
\end{align*}
where $\s_j'=-\s_j$. The estimate \eqref{eq:lowb+-imp} follows from $|\tilde{a}_{-,+}(x,\l)|\lesssim \jap{x}^{\m-2}$ for $x<0$, which is proved in \cite[(3.31)]{HT}. By \cite[Theorem 3.11 (i)]{HT}, we have $\tilde{a}_{-,-}(x,\l)=1+o(1)$ as $x\to -\infty$ uniformly in $\l\geq 0$. Moreover, $\l^{\frac{1}{2}}(\l^2-V(x))^{-\frac{1}{4}}=\l^{\frac{1}{2}}(-V(x))^{-\frac{1}{4}}+o(1)$ as $\l\to 0$, $x\to -\infty$ with $\l=o(\jap{x}^{-\mu/2})$. Combining them and the continuity of $\mathrm{Wr}(\l)$ at $\l=0$, we obtain \eqref{eq:b++asymp2}.
\end{proof}

\subsection{Unboundedness for $p<2$ in the low energy regime} In the following, we focus on $W_+$ only and the other case is similarly handled.
We fix $c>0$. Let $\g_c \in C^{\infty} _c ([0, \infty); [0, 1])$ satisfy $\g_c (\lambda)=1$ for $\lambda \in [0, c]$ and $\supp \g_c \subset [0, 2c]$.

\begin{thm}\label{264112222}
Let $p \in [1, 2]$. The low energy wave operator $W_+ \g_c (P_0)$ is bounded on $L^p (\R)$ if and only if $p=2$.
\end{thm}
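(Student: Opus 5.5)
The case $p=2$ is immediate: $W_+$ is an isometry on $L^2$ and $\g_c(P_0)$ is bounded there. For $p\in[1,2)$ we argue by contradiction, following the scheme sketched in the introduction. We construct $u_n\in\mathcal{S}(\R)$ with $\|u_n\|_{L^p}\sim 1$ and $\|W_+\g_c(P_0)u_n\|_{L^p}\to\infty$.

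\emph{Choice of $u_n$.} Fix $\chi\in\mathcal{S}(\R)$ with $\supp\hat\chi\subset(1,2)$. Set
\begin{align*}
u_n(x):=h_n^{1/p}\chi(h_n(x-x_n)),
\end{align*}
so that $\|u_n\|_{L^p}=\|\chi\|_{L^p}$ and $\hat u_n$ is supported in $(h_n,2h_n)\subset(0,\sqrt c)$. Here $h_n\to0$ and $x_n\to-\infty$ are coupled by $h_n=o(\jap{x_n}^{-\mu/2})$; this is possible because $\mu<2$ leaves room with respect to the uncertainty principle. Since $\hat u_n$ lives on $\l>0$, the contributions $W_{+,-,\s_2}u_n$ vanish, and $\g_c(\l^2)=1$ on the Fourier support.

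\emph{Bounding $W_{+,+,-}u_n$.} We write
\begin{align*}
W_{+,+,-}u_n(x)=(2\pi)^{-1/2}\int b_{+,-}(x,\l)\hat u_n(\l)e^{-iy(x,\l)}d\l.
\end{align*}
On $x<0$ we use \eqref{eq:lowb+-imp}: $|b_{+,-}|\lesssim\l^{1/2}\jap{x}^{5\mu/4-2}$. On $x>0$, and wherever $|\pa_\l y(x,\l)+x_n|\gtrsim\jap{x}+|x_n|$, we integrate by parts in $\l$ using $|\pa_\l^\a b|\lesssim\l^{-\a}$. Each step gains $(h_n(|x|+|x_n|))^{-1}$, and these bounds should sum to $O(1)$ in $L^p$. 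Some care is needed near $x\approx 0$, where $\pa_\l y=\l M_1(x)+\dots$ and the phase $-y(x,\l)-\l x'$ has no critical point because $x'\approx x_n<0$ and $\pa_\l y\ge0$ for $x>0$, $\pa_\l y\le0$ for $x<0$. The remaining sign case is handled by the explicit decay of $b_{+,-}$.

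\emph{Lower bound for $W_{+,+,+}u_n$ (main step).} We have
\begin{align*}
W_{+,+,+}u_n(x)=(2\pi)^{-1/2}\int b_{+,+}(x,\l)\hat u_n(\l)e^{iy(x,\l)}d\l.
\end{align*}
In the region $x\to-\infty$ with $\l\ll\jap{x}^{-\mu/2}$, the Taylor expansion gives $y(x,\l)=M_0(x)+\tfrac{\l^2}{2}M_1(x)+\mathcal{R}_x(\l)$, where $|M_1(x)|\sim\jap{x}^{1+\mu/2}$. Let
\begin{align*}
D_n:=\{x\le -R: |x|^{1+\mu/2}h_n^2\le\delta,\ |x|^{\mu/2}h_n\le\delta\}.
\end{align*}
Choosing $h_n$ so that the first condition dominates, $D_n$ has length $\sim(\delta h_n^{-2})^{1/(1+\mu/2)}$. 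On $D_n$ the phase $e^{iy}$ equals $e^{iM_0(x)}(1+O(\delta))$ uniformly in $\l\in\supp\hat u_n$. By \eqref{eq:b++asymp2},
\begin{align*}
|W_{+,+,+}u_n(x)|\gtrsim|x|^{\mu/4}\Bigl|\int\l^{1/2}\hat u_n(\l)d\l\Bigr|-O(\delta)\cdots\sim|x|^{\mu/4}h_n^{3/2}h_n^{1/p-1}.
\end{align*}
Here we take $\hat\chi\ge0$, so the integral does not cancel. Integrating the $p$-th power over $D_n$ gives
\begin{align*}
\|W_{+,+,+}u_n\|_{L^p(D_n)}^p\gtrsim h_n^{p/2+1}\,|D_n|^{1+p\mu/4}\sim h_n^{p/2+1-2(1+p\mu/4)/(1+\mu/2)}.
\end{align*}
The exponent equals
\begin{align*}
\frac{(p/2+1)(1+\mu/2)-2-p\mu/2}{1+\mu/2}=\frac{(1-\mu/2)(p/2-1)}{1+\mu/2}<0
\end{align*}
exactly when $p<2$ and $\mu<2$, so the norm tends to infinity. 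It remains to check that $x_n$ can be placed so that $D_n$ is disjoint from the mass of $W_{+,+,-}u_n$, and that the $o(1)$ errors in \eqref{eq:b++asymp2} and the $\mathcal{R}_x$ remainder are uniform on $D_n$; I expect that $x_n=0$-type centering works, with only $h_n\to0$ needed.

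\emph{Expected obstacle.} The main difficulty I expect is this last uniformity: controlling $\mathcal{R}_x(\l)$ and the $o(1)$ in \eqref{eq:b++asymp2} together with the size constraint on $D_n$. This requires quantitative forms of the Taylor remainder from \cite{HT}. If the bookkeeping with $x_n$ fails, I would first try $x_n$ fixed at $0$ and rely solely on the scaling in $h_n$.
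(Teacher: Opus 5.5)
Once you set $x_n=0$ (as you suggest at the end), your lower bound for $W_{+,+,+}u_n$ is the paper's argument. The paper takes $\hat u_n\ge 0$ supported at frequencies $\sim 1/n$ (your $h_n=1/n$), $L^p$-normalised and centred at the origin. It bounds $|W_{+,+,+}u_n|$ from below on $\Omega_n=\{x<0:\ \delta n\le |x|^{\frac12+\frac{\mu}{4}}\le 2\delta n\}$, essentially the outer dyadic piece of your $D_n$, and gets the same rate $n^{\frac{2-\mu}{2+\mu}(\frac1p-\frac12)}$. The obstacle you anticipate does not arise: $y(x,\lambda)-M_0(x)=\lambda^2\int_0^x(\sqrt{\lambda^2-V}+\sqrt{-V})^{-1}ds=O(\lambda^2\jap{x}^{1+\mu/2})$ exactly, so no Taylor-remainder bookkeeping is needed. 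Moreover, on $D_n$ one has $\lambda^2\jap{x}^{\mu}\lesssim \delta R^{\mu/2-1}$, so the $o(1)$ in \eqref{eq:b++asymp2} is small once $R$ is large.

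The translation must really be dropped, or at least $h_n|x_n|\ll 1$ imposed. Your non-cancellation step uses $\hat u_n\ge 0$, but $\hat u_n(\lambda)=h_n^{1/p-1}e^{-ix_n\lambda}\hat\chi(\lambda/h_n)$. Hence $\int\lambda^{1/2}\hat u_n\,d\lambda=h_n^{1/p+1/2}\int s^{1/2}e^{-ih_nx_ns}\hat\chi(s)\,ds$, which decays rapidly when $h_n|x_n|\to\infty$, and your coupling $h_n=o(\jap{x_n}^{-\mu/2})$ allows that.

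The genuine gap is your treatment of $W_{+,+,-}u_n$, where you aim for a global bound $\|W_{+,+,-}u_n\|_{L^p(\R)}=O(1)$. This is false in general, for three reasons.
\begin{enumerate}
\item At low energy $\partial_\lambda y(x,\lambda)\approx\lambda M_1(x)$. So an integration by parts gains $(h_n^2|M_1(x)|)^{-1}$, not $(h_n(|x|+|x_n|))^{-1}$, and this factor is $\gtrsim 1$ for all $|x|\lesssim h_n^{-4/(2+\mu)}$.
\item Your sign argument is reversed. For $x'<0$ one has $\partial_\lambda(-y(x,\lambda)-\lambda x')=|x'|-\partial_\lambda y(x,\lambda)$, which can vanish when $x>0$ and cannot when $x<0$.
\item On $x>0$ there is no decay of $b_{+,-}\propto\overline{\tilde a_{-,+}}$. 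The bound $|\tilde a_{-,+}|\lesssim\jap{x}^{\mu-2}$ behind \eqref{eq:lowb+-imp} holds only for $x<0$. As $x\to+\infty$, $\tilde a_{-,+}$ tends to a reflection amplitude, which in general does not vanish.
\end{enumerate}
Consequently, with $x_n=0$, $|W_{+,+,-}u_n(x)|$ is typically of size $h_n^{1/p+1/2}x^{\mu/4}$ on $\{x>R:\ h_n^2x^{1+\mu/2}\le\delta\}$. Its $L^p$ norm then blows up at the same rate as your main term.

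No global bound is needed. Since $W_{+,-,\pm}u_n=0$, you have $\|W_+\psi_c(P_0)u_n\|_{L^p}\ge\|W_{+,+,+}u_n\|_{L^p(D_n)}-\|W_{+,+,-}u_n\|_{L^p(D_n)}$. For $x\in D_n\subset(-\infty,-R]$, \eqref{eq:lowb+-imp} gives $|W_{+,+,-}u_n(x)|\lesssim h_n^{1/p+1/2}\jap{x}^{\frac{5\mu}{4}-2}$. That is $\jap{x}^{\mu-2}\lesssim R^{\mu-2}$ times your pointwise lower bound for $|W_{+,+,+}u_n(x)|$. Localising to the left half-line is exactly how the paper concludes: it compares the two norms on $\Omega_n$ only. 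With $x_n=0$ and this change, your argument is complete.
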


\begin{proof}[Proof of Theorem \ref{264112222}]

Let $\chi \in C^{\infty} _c (\R; [0, 1])$ be such that $\supp \chi \subset [\frac{1}{2}, 2]$ and $\chi (\lambda) =1$ on $[\frac{2}{3}, \frac{5}{3}]$. We set $c_n := \frac{3}{2n}$ and $v_n (\lambda) = \chi(n(\lambda -c_n)) \in C^{\infty} _c (\R)$. Note that, since we have $\mathcal{F}^* [v_n] (x) = \frac{1}{n} \mathcal{F}^* [\chi] (\frac{x}{n})e^{ic_nx}$ and $\|\mathcal{F}^* [v_n]\|_{L^p (\R)} \sim n^{\frac{1}{p} -1}$, the function $u_n (x):= n^{1-\frac{1}{p}} \mathcal{F}^* [v_n] (x)$ satisfies $\|u_n\|_{L^p (\R)} \sim 1$ uniformly in $n \in \N$. 

We observe that $W_{+, -, \pm, \g_c}u_n=0$ holds by the support property of $\chi$. Therefore, we have $W_+ \g_c (P_0)u_n=W_{+, +, +, \g_c}u_n+W_{+, +, -, \g_c}u_n$. Choosing $0<\d\leq 1$ determined later, we define 
\begin{align*}
\Omega_n:=\{x<0\mid \d n\leq |x|^{\frac{1}{2}+\frac{\mu}{4}}\leq 2\d n\}.
\end{align*}
We will prove
\begin{align*}
\|W_{+,+,+,\g_c}u_n\|_{L^p(\Omega_n)}\gtrsim n^{\frac{2-\m}{2+\m}\left(\frac{1}{p}-\frac{1}{2}\right)},\quad \|W_{+, +, -, \g_c} u_n\|_{L^p (\Omega_n)} =o(n^{\frac{2-\m}{2+\m}\left(\frac{1}{p}-\frac{1}{2}\right)})\quad \text{as $n\to \infty$}.
\end{align*}
Assuming them and $1\leq p<2$, we obtain $\|W_{+, \g_c} u_n\|_{L^p (\Omega_n)}\gtrsim n^{\frac{2-\m}{2+\m}\left(\frac{1}{p}-\frac{1}{2}\right)} \to \infty$ as $n\to \infty$ though $u_n$ is bounded in $L^p (\R)$. Thus, it suffices to prove the above two estimates.

\underline{\textbf{Estimates on $W_{+, +, +, \g_c}$}} We write 
\begin{align*}
y(x, \lambda) - \int_{0}^{x} \sqrt{-V(s)} ds = \lambda^2 \int_{0}^{x} \frac{1}{\sqrt{\lambda^2 -V(s)} + \sqrt{-V(s)}} ds=:\Phi(x,\l).
\end{align*}
For sufficiently large $n \in \N$ such that $\g_c(\l) \chi(n(\lambda -b_n))= \chi(n(\lambda -b_n))$, we have
\begin{align*}
W_{+, +, +, \g_c} u_n (x) =  n^{1-\frac{1}{p}}e^{i\int_{0}^{x} \sqrt{-V(s)} ds} \int_{0}^{\infty} \chi(n(\lambda -c_n))  b_{+, +} (x, \lambda) e^{i\Phi(x, \lambda)} d\lambda =:  n^{1-\frac{1}{p}}e^{i\int_{0}^{x} \sqrt{-V(s)} ds} \mathrm{I}_{n, x}.
\end{align*}
Note that $|\l|\lesssim n^{-1}$ on $\supp \chi(n(\cdot -c_n))$.

To estimate $\mathrm{I}_{n,x}$, we see that
\begin{align}\label{eq:Phib_++est}
|\Phi(x,\l)|\lesssim \d^2,\quad e^{i\theta}b_{+,+}(x,\l)=2\l^{\frac{1}{2}}(-V(x))^{-\frac{1}{4}}|\mathrm{Wr}(0)|^{-1}\left(1+o(1)\right)
\end{align}
hold for $x\in \Omega_n$ and $|\l|\lesssim n^{-1}$ as $n\to \infty$, where $\theta\in \R$ is taken such that $e^{i\theta}i(\overline{\mathrm{Wr}(0)})^{-1}=|\mathrm{Wr}(0)|^{-1}$. The first inequality follows from
\begin{align*}
|\Phi(x,\l)|\lesssim \l^2\int_x^0\jap{s}^{\frac{\m}{2}}ds\lesssim \l^2\jap{x}^{1+\frac{\m}{2}}\lesssim n^{-2}\cdot \d^2\cdot n^2=\d^2
\end{align*}
for $x\in \Omega_n$ and $|\l|\lesssim n^{-1}$.
The second asymptotics in \eqref{eq:Phib_++est} follows from \eqref{eq:b++asymp2} an the observation that if $\l\lesssim n^{-1}$, then
\begin{align*}
\l^2\lesssim n^{-2}\sim \jap{x}^{-1-\m/2}=o(\jap{x}^{-\m})
\end{align*}
for $x\in \Omega_n$ as $n\to \infty$, where we recall $0<\m<2$.

Now we take $\d>0$ sufficiently small so that $\re (e^{i\theta}b_{+,+}(x,\l)e^{i\Phi(x,\l)})\gtrsim \l^{\frac{1}{2}}(-V(x))^{-\frac{1}{4}}$ for $x\in \Omega_n$, $|\l|\lesssim n^{-1}$, and sufficiently large $n$, which is possible by \eqref{eq:Phib_++est}. Then
\begin{align*}
|\mathrm{I}_{n,x}|\geq& |e^{i\theta} \mathrm{I}_{n,x}|\gtrsim \re \mathrm{I}_{n,x}\geq \frac{1}{2\pi} \int_{0}^{\infty} \chi(n(\lambda -c_n))\l^{\frac{1}{2}}(-V(x))^{-\frac{1}{4}}  d\lambda\\
\geq& \frac{(-V(x))^{-\frac{1}{4}}}{2\pi}\int_{\frac{13}{6n}}^{\frac{19}{6n}}\l^{\frac{1}{2}}d\l\sim n^{-\frac{3}{2}}\jap{x}^{\frac{\m}{4}},
\end{align*}
for $x\in \Omega_n$ and sufficiently large $n$. Therefore, $|W_{+, +, +, \g_c} u_n (x)|\gtrsim n^{1-\frac{1}{p}}n^{-\frac{3}{2}}\jap{x}^{\frac{\m}{4}}\sim n^{-\frac{2-\m}{2+\m}\cdot\frac{1}{2}-\frac{1}{p}}$ for $x\in \Omega_n$ and 
\begin{align*}
\|W_{+, +, +, \g_c} u_n\|_{L^p (\Omega_n)} \gtrsim n^{\frac{2-\m}{2+\m}\left(\frac{1}{p}-\frac{1}{2}\right)}.
\end{align*}

\underline{\textbf{Estimates on $W_{+, +, -, \g_c}$}}
We use the integral expression
\begin{align*}
W_{+, +, -, \g_c} u_n (x) =  n^{1-\frac{1}{p}} \int_{0}^{\infty} \chi(n(\lambda -c_n))  b_{+, -} (x, \lambda) e^{-iy(x, \lambda)} d\lambda,
\end{align*}
the support property of $\chi$ and \eqref{eq:lowb+-imp} to deduce the pointwise bound 
\begin{align*}
|W_{+, +, -, \g_c} u_n (x)| \lesssim n^{1-\frac{1}{p}} \int_{0}^{\frac{10}{n}} \lambda^{\frac{1}{2}} \jap{x}^{\frac{\mu}{4}} \cdot \jap{x}^{\mu -2} d\lambda \lesssim n^{-\frac{2-\m}{2+\m}\cdot\frac{1}{2}-\frac{1}{p}} \cdot n^{(\m-2)\frac{\m}{4}}
\end{align*}
for $x \in \Omega_n$, where we recall that $x<0$ for $x\in \Omega_n$. Therefore we obtain $\|W_{+, +, -, \g_c} u_n\|_{L^p (\Omega_n)} \lesssim n^{\frac{2-\m}{2+\m}\left(\frac{1}{p}-\frac{1}{2}\right)} \cdot n^{(\m-2)\frac{\m}{4}}=o(n^{\frac{2-\m}{2+\m}\left(\frac{1}{p}-\frac{1}{2}\right)})$ since we assume $\mu \in (0, 2)$. This completes the proof.
\end{proof}

\subsection{Unboundedness for $p>2$ in the low energy regime}
We fix $c>0$. Let $\g_c \in C^{\infty} _c ([0, \infty); [0, 1])$ satisfy $\g_c (\lambda)=1$ for $\lambda \in [0, c]$ and $\supp \g_c \subset [0, 2c]$.
\begin{thm}\label{26521446}
Let $p \in [2, \infty]$. The low energy wave operator $W_+ \g_c (P_0)$ is bounded on $L^p (\R)$ if and only if $p=2$.
\end{thm}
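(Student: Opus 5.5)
We argue by duality in the spirit of Theorem \ref{264221222}: we build a normalized sequence $u_n$ whose phase cancels the oscillation of the kernel $W_{+,+,+}(x,x')$ for $x$ near a fixed point far to the left. As in the proof of Theorem \ref{264112222}, we restrict attention to $x<0$, $|x|\to\infty$, and to energies $\l\lesssim n^{-1}$. In this regime $b_{+,+}$ has the explicit asymptotics \eqref{eq:b++asymp2}, and $b_{+,-}$ is small by \eqref{eq:lowb+-imp}.

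First we analyse the kernel
\begin{align*}
W_{+,+,+}(x,x')=\frac{1}{2\pi}\int_0^\infty b_{+,+}(x,\l)\g_c(\l^2)e^{i(y(x,\l)-\l x')}d\l .
\end{align*}
We use the Taylor expansion
\begin{align*}
y(x,\l)=M_0(x)+\tfrac{\l^2}{2}M_1(x)+\mathcal{R}_x(\l), \qquad M_1(x)=\int_0^x(-V)^{-\frac12}\sim -|x|^{1+\frac{\mu}{2}} \quad (x\to-\infty).
\end{align*}
Then the phase $\l\mapsto \tfrac{\l^2}{2}M_1(x)-\l x'$ has the nondegenerate critical point $\l_{x,x'}=x'/M_1(x)$, with second derivative $M_1(x)$. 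Fix $x_n\to-\infty$ and set $N_n:=|M_1(x_n)|\sim|x_n|^{1+\mu/2}$.

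We restrict $x'$ to the window $A_n:=\{x' : \l_{x_n,x'}\in[a,2a]\cdot N_n^{-1/2}\}$. There $\l\sim N_n^{-1/2}\sim|x_n|^{-\frac12-\frac{\mu}{4}}=o(\jap{x_n}^{-\mu/2})$, because $\mu<2$. So we are inside the range of validity of \eqref{eq:b++asymp2}, and the remainder $\mathcal{R}_x$ stays $O(1)$ with a small constant (its size is $\l^4\int|V|^{-3/2}\sim \l^4|x|^{1+3\mu/2}$). The stationary phase theorem (or a direct Fresnel computation at scale $N_n$) should give
\begin{align*}
W_{+,+,+}(x,x')\approx c\,N_n^{-\frac12}\,\l_{x,x'}^{\frac12}(-V(x))^{-\frac14}\,e^{i(M_0(x)-x'^2/(2M_1(x)))}.
\end{align*}
Its modulus is $\sim N_n^{-3/4}|x_n|^{\mu/4}$, and $|A_n|\sim N_n^{1/2}$. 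The main technical care is that the critical point lies near the endpoint $0$ of integration only when $a$ is small; we keep $a\sim1$ fixed. The cutoff $\g_c=1$ near $0$ keeps the amplitude smooth on the relevant scale.

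Next we define
\begin{align*}
u_n(x'):=|A_n|^{-1/p}1_{A_n}(x')\,e^{i x'^2/(2M_1(x_n))},
\end{align*}
which satisfies $\|u_n\|_{L^p}\sim1$. For $|x-x_n|\le\e|x_n|^{\mu/2}$ we have $|M_1(x)-M_1(x_n)|\lesssim \e|x_n|^{\mu/2}|x_n|^{\mu/2}$, so the residual phase $x'^2\bigl(M_1(x)^{-1}-M_1(x_n)^{-1}\bigr)$ is $O(\e)$ on $A_n$, since $x'^2/N_n^2\sim N_n^{-1}$. We must check that this window indeed gives an $O(\e)$ phase; if the $x$-window turns out narrower, we shrink it and redo the count. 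The factor $e^{iM_0(x)}$ is independent of $x'$. Taking real parts as in Theorem \ref{264221222}, we get
\begin{align*}
|W_{+,+,+}u_n(x)|\gtrsim |A_n|^{1-\frac1p}N_n^{-\frac34}|x_n|^{\frac{\mu}{4}}
\end{align*}
on a set of $x$ of length $\sim|x_n|^{\mu/2}$. Hence
\begin{align*}
\|W_{+,+,+}u_n\|_{L^p}\gtrsim N_n^{\frac12-\frac1{2p}-\frac34}|x_n|^{\frac{\mu}{4}+\frac{\mu}{2p}}.
\end{align*}
Substituting $N_n\sim|x_n|^{1+\mu/2}$, the exponent of $|x_n|$ is $\frac{2-\mu}{4}\bigl(\frac12-\frac1p\bigr)\cdot 2$ up to bookkeeping, which is positive for $p>2$ and $\mu<2$, so the norm diverges. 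The case $p=\infty$ is handled identically. The non-stationary contributions of $x'\notin A_n$ are excluded because $u_n$ is supported in $A_n$.

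Finally, the error terms are controlled as follows. The term $W_{+,-,\pm}u_n$ vanishes if we insert a Fourier cutoff $\f(D_x)$ supported in $(0,\infty)$, as in the proof of Theorem \ref{264221222}; this requires $A_n\subset\R_{<0}$, which holds because $\l_{x,x'}>0$ forces $x'<0$. The term $W_{+,+,-}u_n$ is bounded pointwise by \eqref{eq:lowb+-imp}, which gains the factor $\jap{x}^{\mu-2}$ as in Theorem \ref{264112222}, and this makes it negligible. We expect the main obstacle to be the uniform stationary phase with the degenerate amplitude $\l^{1/2}$ together with the control of $\mathcal{R}_x$ and the $O(\e)$ phase matching across the $x$-window.
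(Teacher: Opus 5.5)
There is a genuine gap, and it lies in the choice of scales, not in bookkeeping.

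You work at energy $\l\sim N_n^{-1/2}=|M_1(x_n)|^{-1/2}$. This is exactly the scale at which the effective stationary-phase parameter $\l^2|M_1(x)|$ is of order one. The critical point $\l_{x,x'}$ sits at distance $\sim N_n^{-1/2}$ from the endpoint $\l=0$. That distance is also the width of the stationary region and the scale on which the amplitude $\l^{1/2}$ varies. So no asymptotic formula of the kind you write is available; note that the chirp $x'^2/(2M_1)$ varies by only $O(1)$ over all of $A_n$.

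More decisively, even if one grants your pointwise lower bound, your own count does not diverge:
\[
N_n^{\frac12-\frac1{2p}-\frac34}|x_n|^{\frac\mu4+\frac{\mu}{2p}}=|x_n|^{-(1-\frac\mu2)(\frac14+\frac1{2p})}\longrightarrow 0 .
\]
Enlarging the $x$-window to the maximal phase-matching length $|x-x_n|\lesssim\e|x_n|$ still only gives $|x_n|^{-\frac{2-\mu}{4}(\frac12-\frac1p)}\to 0$. The exponent you claim as positive actually has the wrong sign.

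The failure is structural. Suppose the kernel has modulus $K$ and you align its phase on an $X\times Y$ rectangle, with $X$ the output window and $Y$ the input window. The test then gives the ratio $KY^{1-\frac1p}X^{\frac1p}=K(XY)^{\frac12}(Y/X)^{\frac12-\frac1p}$, and $L^2$-boundedness forces $K(XY)^{1/2}\lesssim 1$ (it is $\sim 1$ both at your scales and at the paper's). At energy $\l$ one has $Y\sim \l|M_1(x)|$ and $X\sim \l^{-2}|x|^{-\mu/2}$, hence $Y/X\sim \l^3|x|^{1+\mu}$. For $p>2$ you need this to tend to infinity. At the same time you must keep $\l\ll\jap{x}^{-\mu/2}$ so that \eqref{eq:b++asymp2} applies. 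Together these require $|x|^{-\frac{1+\mu}{3}}\ll \l\ll |x|^{-\frac\mu2}$. Your $\l\sim|x_n|^{-\frac12-\frac\mu4}$ lies below this window, because $\frac12+\frac\mu4>\frac{1+\mu}{3}$ for $\mu<2$.

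The paper instead takes $\l\sim 1/n$ and $x\sim -n^{\c}$ with $\c\in(\frac{3}{\mu+1},\frac2\mu)$. Then:
\begin{itemize}
\item $L_{n,x}=M_1(x)/(2n^2)\to-\infty$ is a genuine large parameter;
\item the input window has length $n^{\c(1+\frac\mu2)-1}$ and the output window has length $\e n^{2-\frac{\c\mu}{2}}$;
\item the norm grows like $n^{(\frac12-\frac1p)(\c\mu+\c-3)}$.
\end{itemize}

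To make this work the paper localizes the energy with the multiplier $\chi(nD_x-1)$, which is uniformly bounded on $L^p$ by scaling and Young's inequality. This cutoff removes the endpoint $\l=0$ and the singular amplitude from the stationary phase. It is also what makes $W_{+,-,\pm}$ vanish.

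Your justification for dropping $W_{+,-,\pm}$ via $A_n\subset\R_{<0}$ confuses spatial support with Fourier support. Moreover, a fixed cutoff $\f(D_x)$ with $\supp\f\subset(0,\infty)$ cannot capture energies tending to $0$, so an $n$-dependent, uniformly bounded localization of this type is unavoidable.
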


Define $M_j(x):=\int_0^x(-V(s))^{\frac{1-2j}{2}}ds$. Let $\chi \in C^{\infty} _c (\R; [0, 1])$ be such that $\supp \chi \subset [\frac{1}{2}, 2]$ and $\chi (\lambda) =1$ on $[\frac{2}{3}, \frac{5}{3}]$. By Young's inequality, we have $\|\chi (nD_x -1)\|_{L^p (\R)\to L^p(\R)} \lesssim 1$. In the following, we assume that
\begin{align*}
\c\in \left(\frac{3}{\mu +1}, \frac{2}{\mu}\right),\quad 0<\e\ll 1,
\end{align*}
which are determined later.
It suffices for proving Theorem \ref{26521446} to construct a sequence $u_n\in L^p(\R)$ such that $\|u_n\|_{L^p(\R)}\sim 1$ and
\begin{align*}
&\|W_nu_n\|_{L^p(|x+n^{\c}|\lesssim \e n^{2-\frac{\c\m}{2}})}\to \infty\,\,(n\to \infty)\quad \text{with}\quad W_n:=e^{-iM_0(x)}\cdot W_{+,+,\g_c}\chi (nD_x -1)\\
&\|W_{+,-,\g_c}\chi (nD_x -1)u_n\|_{L^p(|x+n^{\c}|\lesssim \e n^{2-\frac{\c\m}{2}})}\lesssim 1,\quad W_{-,\pm,\g_c}\chi (nD_x -1)u_n=0.
\end{align*}

Before specifying such a sequence, we study oscillating properties of $W_n$.
We note that $\g_c(\l)\chi(n\l-1)=\chi (n\l -1)$ for sufficiently large $n$ by the support properties of $\g_c$ and $\chi$. By using this and the change of variable $\l\to n^{-1}(\l+1)$, we obtain the following expression:
\begin{align}\label{eq:W_nb_n}
W_n(x, x') =n^{-1}e^{iL_{n,x}-in^{-1}x'} \int_{0}^{\infty} b_{n} (x, \lambda) e^{iL_{n,x}\Phi_{n,x,x'} (\lambda)}d\lambda,\,\, b_n(x,\l):=\chi (\l)b_{+,+}(x,(\l+1)/n),
\end{align}
where $L_{n, x} = \frac{1}{2n^2}M_1(x)$ and $\Phi_{n, x, x'} (\l):=L_{n,x}^{-1}(y(x,(\l+1)/n)-(\l+1) x'/n-M_0(x))$. In view of Taylor's expansion 
\begin{align}\label{eq:yTay}
y(x,\l)=M_0(x)+\frac{1}{2}M_1(x)\l^2\underbrace{-\l^4\int_{0}^{x} \frac{ds}{2\sqrt{-V(s)} (\sqrt{-V(s)} + \sqrt{\lambda^2 -V(s)})^2}}_{=:\mathcal{R}_{x}(\l)},
\end{align}
we rewrite $\Phi_{n,x,x'}(\l)$ as
\begin{align}\label{eq:Phinx}
\Phi_{n,x,x'}(\l)=\l^2-2(nx' M_1(x)^{-1}-1)\l+\mathcal{R}_{n,x}(\l),\quad \mathcal{R}_{n,x}(\l):=L_{n,x}^{-1}\mathcal{R}_{x}((\l+1)/n).
\end{align}

The next lemma shows that the term $\mathcal{R}_{n,x}(\l/n)$ can be regarded as a perturbation of $\frac{1}{2}M_1(x)(\l/n)^2$. We also derive upper bounds of $\pa_xL_{n,x}$ and $\pa_{\l}\pa_x\Phi_{n,x,x'}(\l)$ which are used later.

\begin{lemma}\label{lem:lowRperturbMd}
\noindent$(i)$
For $\a\in\mathbb{N}_0$, we have $|\pa_{\l}^{\a}\mathcal{R}_{n,x}(\l)|\ll 1$ when $\l\sim 1$ and $|x|\ll n^{\frac{2}{\m}}$ with sufficiently large $n$.

\noindent$(ii)$ We have $L_{n,x}\sim -n^{(1+\frac{\m}{2})\c-2}$,  $|\pa_xL_{n,x}|\lesssim n^{\frac{\c\m}{2}-2}$, $|\pa_x\Phi_{n,x,x'}(\l)|\lesssim n^{-\c}$, and $|\pa_{\l}\pa_x\Phi_{n,x,x'}(\l)|\lesssim n^{-\c}$ when $x\sim -n^{\c}$, $x'\sim -n^{\c(1+\frac{\m}{2})-1}$, and $\l\sim 1$ hold with sufficiently large $n\in\mathbb{N}$.

\end{lemma}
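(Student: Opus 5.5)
The plan is to reduce everything to the two–sided bound $-V(s)\sim\jap{s}^{-\mu}$ from \eqref{26641742} and \eqref{eq:Vneg}, together with the explicit integral formulas for $M_1$ and $\mathcal{R}_x$. First I record the consequences $M_1(x)=\int_0^x(-V(s))^{-1/2}ds$, so that $|M_1(x)|\sim |x|\jap{x}^{\mu/2}$, $M_1(x)$ has the sign of $x$, and $\pa_xM_1(x)=(-V(x))^{-1/2}\sim\jap{x}^{\mu/2}$. In particular, since $L_{n,x}=\frac{1}{2n^2}M_1(x)$, for $x\sim -n^{\c}$ we get $L_{n,x}\sim -n^{(1+\frac{\mu}{2})\c-2}$ and $|\pa_xL_{n,x}|=\frac{1}{2n^2}(-V(x))^{-1/2}\lesssim n^{\frac{\c\mu}{2}-2}$. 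This already gives the first two claims of $(ii)$.

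For $(i)$, I write the integrand of $\mathcal{R}_x$ in \eqref{eq:yTay} as $(-V(s))^{-3/2}G(\l^2/(-V(s)))$ with $G(t)=\frac{1}{2}(1+\sqrt{1+t})^{-2}$, which is smooth on $[0,\infty)$ with bounded derivatives of all orders. Put $\l=(\l'+1)/n$ with $\l'\sim1$. Then $\l\lesssim n^{-1}$, and for $|s|\le|x|\ll n^{2/\mu}$ we have $t=\l^2/(-V(s))\lesssim n^{-2}\jap{x}^{\mu}\ll1$. Differentiating $\l^4 G(\l^2/(-V))$ in $\l$, each derivative costs at most a factor $\l^{-1}$, because $\pa_\l^k$ of $G(\l^2/(-V))$ is bounded by $\l^{-k}$ uniformly on this range. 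Hence
\[
|\pa_\l^{\a}\mathcal{R}_x(\l)|\lesssim \l^{4-\a}\int_0^{|x|}\jap{s}^{3\mu/2}ds\lesssim \l^{4-\a}|x|\jap{x}^{3\mu/2}.
\]
Passing to $\mathcal{R}_{n,x}(\l')=L_{n,x}^{-1}\mathcal{R}_x((\l'+1)/n)$, the chain rule gives a factor $n^{-\a}$, and $\l^{4-\a}\sim n^{-4+\a}$. Since $|L_{n,x}|^{-1}\sim n^2|x|^{-1}\jap{x}^{-\mu/2}$, we obtain
\[
|\pa_{\l'}^{\a}\mathcal{R}_{n,x}|\lesssim n^{-2}\jap{x}^{\mu}\ll1 \quad \text{for } |x|\ll n^{2/\mu}.
\]
The factors $|x|$ cancel, so small $|x|$ causes no problem.

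For the remaining claims of $(ii)$, I differentiate \eqref{eq:Phinx} in $x$. The linear term gives
\[
2n x'\,M_1(x)^{-2}\,\pa_xM_1(x)\,\l \sim n\cdot n^{\c(1+\frac{\mu}{2})-1}\cdot n^{-2\c(1+\frac{\mu}{2})}\cdot n^{\frac{\c\mu}{2}} = n^{-\c}.
\]
Its $\l$-derivative obeys the same bound. For the remainder I write
\[
\pa_x\mathcal{R}_{n,x}=-\frac{\pa_xL_{n,x}}{L_{n,x}}\mathcal{R}_{n,x}+L_{n,x}^{-1}\pa_x\mathcal{R}_x\bigl((\l+1)/n\bigr).
\]
In the first term, $|\pa_xL/L|\sim|x|^{-1}\sim n^{-\c}$ and $\mathcal{R}_{n,x}=O(1)$ by $(i)$. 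Part $(i)$ applies because $\c<2/\mu$ forces $|x|\sim n^{\c}\ll n^{2/\mu}$. The second term is bounded by
\[
n^{2}|x|^{-1-\mu/2}\cdot n^{-4}\jap{x}^{3\mu/2}\sim n^{-\c}\,n^{\c\mu-2}\ll n^{-\c},
\]
again since $\c\mu<2$. Applying $\pa_\l$ to either term only produces bounded factors, exactly as in $(i)$. Together these give $|\pa_x\Phi_{n,x,x'}|+|\pa_\l\pa_x\Phi_{n,x,x'}|\lesssim n^{-\c}$.

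The main obstacle is the uniform symbol-type bound $|\pa_\l^k G(\l^2/(-V(s)))|\lesssim \l^{-k}$ in step $(i)$. I expect to get it from the smoothness of $G$ together with the fact that $\l^2/(-V(s))$ stays in a bounded set on the relevant range. Everything else is bookkeeping of powers of $n$, and the constraint $\c<2/\mu$ is used exactly to keep $n^{-2}\jap{x}^{\mu}$ and $n^{\c\mu-2}$ small.
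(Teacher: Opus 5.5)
Your proof is correct and is essentially the paper's argument. Part (i) rests on the same symbol bound $|\partial_\lambda^\alpha\mathcal{R}_x(\lambda)|\lesssim\lambda^{4-\alpha}|M_2(x)|$ with $|M_2(x)|\sim\langle x\rangle^{\mu}|M_1(x)|$. Part (ii) is the same power counting, except that you differentiate the split form \eqref{eq:Phinx} and reuse (i) for the remainder, whereas the paper applies the quotient rule to the whole expression $L_{n,x}^{-1}(y(x,(\lambda+1)/n)-(\lambda+1)x'/n-M_0(x))$.

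One small point: relative to that defining expression, \eqref{eq:Phinx} as printed omits the $\lambda$-independent term $1-2nx'M_1(x)^{-1}$. Its $x$-derivative $2nx'M_1(x)^{-2}M_1'(x)$ is $O(n^{-\gamma})$ by exactly your count for the linear term, and its $\lambda$-derivative vanishes, so your bounds are unaffected.
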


\begin{proof}
\noindent$(i)$
By the definition of $\mathcal{R}_{x}(\l)$, we have $|\pa_{\l}^{\a}\mathcal{R}_{x}(\l)|\lesssim \l^{4-\a}|M_2(x)|$ uniformly in $\l>0$ and $x\in\mathbb{R}$, where we recall $M_j(x)=\int_0^x(-V(s))^{\frac{1-2j}{2}}ds$. By \eqref{26641742} and \eqref{eq:Vneg}, we have $|M_2(x)|\sim \jap{x}^{\m}|M_1(x)|$ and $|\pa_{\l}^{\a}\mathcal{R}_{x}(\l)|\ll \l^{2-\a}|M_1(x)|$ when $\l\ll \jap{x}^{-\m/2}$. By using this and the definition of $\mathcal{R}_{n,x}=L_{n,x}^{-1}\mathcal{R}_{x}((\l+1)/n)$ with $L_{n,x}=\frac{1}{2n^2}M_1(x)$, the claim follows.

\noindent$(ii)$ We recall $L_{n,x}=\frac{1}{2n^2}M_1(x)$. Then we see that $L_{n,x}\sim -n^{(1+\frac{\m}{2})\c-2}$ and $|\pa_xL_{n,x}|=\frac{1}{2n^2}|V(x)|^{-\frac{1}{2}}\lesssim n^{\frac{\c\m}{2}-2}$, where we note that $M_1(x)<0$ for $x<0$. By \eqref{eq:yTay},
\begin{align*}
|\pa_x(y(x,\l)-M_0(x))|=\left|\frac{1}{2}M_1'(x)\l^2-\l^4\frac{1}{2\sqrt{-V(x)} (\sqrt{-V(x)} + \sqrt{\lambda^2 -V(x)})^2}\right|\lesssim \l^2\jap{x}^{\frac{\m}{2}}+\l^4\jap{x}^{\frac{3}{2}\m}.
\end{align*}
Hence
\begin{align*}
|\pa_x\Phi_{n, x, x'} (\l)|\lesssim& |L_{n,x}^{-1}||(\pa_xy)(x,(\l+1)/n)-M_0'(x)|+|L_{n,x}^{-2}\pa_xL_{n,x}||y(x,(\l+1)/n)-(\l+1)x'/n-M_0(x)|\\
\lesssim& n^{-\c}.
\end{align*}

By differentiating $\Phi_{n, x, x'} (\l)=L_{n,x}^{-1}(y(x,(\l+1)/n)-(\l+1) x'/n-M_0(x))$ with respect to $\l$ and $x$, we have $\pa_{\l}\pa_x\Phi_{n, x, x'} (\l)=L_{n,x}^{-1}\pa_{x}\pa_{\l}(y(x,(\l+1)/n))+(\pa_xL_{n,x}^{-1})\pa_{\l}(y(x,(\l+1)/n))-(\pa_xL_{n,x}^{-1})x'/n $. Since
\begin{align*}
|\pa_{\l}y(x,\l)|=\left|\int_0^x\frac{\l}{\sqrt{\l^2-V(s)}}ds\right|\lesssim \l\jap{x}^{1+\frac{\m}{2}},\,\, |\pa_x\pa_{\l}y(x,\l)|=\frac{\l}{\sqrt{\l^2-V(x)}}\lesssim \l\jap{x}^{\frac{\m}{2}}
\end{align*}
we obtain
\begin{align*}
|\pa_{\l}\pa_x\Phi_{n, x, x'} (\l)|\lesssim |L_{n,x}^{-1}|n^{-2}\jap{x}^{\frac{\m}{2}}+|L_{n,x}^{-2}\pa_xL_{n,x}|(n^{-2}\jap{x}^{1+\frac{\m}{2}}+n^{-1}\jap{x'})\lesssim n^{-\c},
\end{align*}
which completes the proof.
\end{proof}

For $0<C_1 < C_2$, we set 
\begin{align*}
\tilde{\Omega}_{n} := \{(x, x') \in \R^2 \mid C_1 < nx'M_1(x)^{-1} -1 < C_2\}
\end{align*}
for sufficiently large $n \in \N$. 
We note that
\begin{align}\label{eq:lowxvolasy}
x\sim -n^{\c},\,(x, x') \in \tilde{\Omega}_{n} \Rightarrow x' \sim -n^{\c (1+\frac{\mu}{2}) -1}\,\text{and }\,\VOL (\{x' \in \R \mid (n^{\c}, x') \in \tilde{\Omega}_{n} \}) \sim n^{\c (1+\frac{\mu}{2}) -1}
\end{align}
for sufficiently large $n\in\mathbb{N}$.

\begin{proposition}\label{prop:p>2lowstphase}
For $R_2\gg R_1\gg 1$, $0<C_1\ll 1\ll C_2$ and sufficiently large $n\in\mathbb{N}$, we have the following:

\vspace{1mm}
\noindent$(i)$  We have
\begin{align*}
|W_{n}(x,x')|\lesssim n^{-\frac{3}{2}}L_{n,x}^{-1}\jap{x}^{\frac{\m}{4}}
\end{align*}
if $x\sim -n^{\c}$, $(x,x'),(n^{\c},x')\in \tilde{\Omega}_n^c$ and $n\in\mathbb{N}$ is sufficiently large.

\vspace{1mm}
\noindent$(ii)$ There exists a unique non-degenerate critical point $\l_{n,x,x'}\in (C_1/2,2C_2)$ of $\Phi_{x, x'} (\lambda)$  such that
\begin{align*}
W _n (x, x')=n^{-1}e^{iL_{n,x}-in^{-1}x'}\cdot |L_{n,x}|^{-\frac{1}{2}} b_n (x, \l_{n,x, x'}) e^{iL_{n,x}\Phi_{n,x, x'} (\l_{n,x, x'})-\frac{i\pi}{4}}+ O(n^{-\frac{3}{2}}L_{n,x}^{-1}\jap{x}^{\frac{\m}{4}})
\end{align*}
for $x\sim -n^{\c}$ and $(x,x')\in \tilde{\Omega}_n$ with sufficiently large $n\in\mathbb{N}$.

\end{proposition}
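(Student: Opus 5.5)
We treat Proposition \ref{prop:p>2lowstphase} as the low energy analogue of Proposition \ref{prop:p>2stphase}, working with the rescaled oscillatory integral \eqref{eq:W_nb_n}. The large parameter is $|L_{n,x}|\sim n^{(1+\frac{\m}{2})\c-2}$ from Lemma \ref{lem:lowRperturbMd} $(ii)$; it tends to infinity because $\c>\frac{3}{\m+1}>\frac{2}{1+\m/2}$. The amplitude $b_n(x,\l)=\chi(\l)b_{+,+}(x,(\l+1)/n)$ is supported in $\l\in[\frac12,2]$. Since $|x|\sim n^{\c}\ll n^{2/\m}$, we have $(\l+1)/n=o(\jap{x}^{-\m/2})$. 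Hence \eqref{eq:b++asymp2} and the symbol bounds of Proposition \ref{prop:waveexpression2} give $|\pa_\l^\a b_n(x,\l)|\lesssim n^{-1/2}\jap{x}^{\m/4}$ for $\a=0,1,2$. This is exactly the size needed: the error terms $n^{-1}\cdot n^{-1/2}\jap{x}^{\m/4}\cdot L_{n,x}^{-1}$ match the claimed remainder $O(n^{-3/2}L_{n,x}^{-1}\jap{x}^{\m/4})$.

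\textbf{Part $(i)$, non-stationary region.} We use \eqref{eq:Phinx}, $\Phi_{n,x,x'}(\l)=\l^2-2\kappa\l+\mathcal{R}_{n,x}(\l)$ with $\kappa:=nx'M_1(x)^{-1}-1$. By Lemma \ref{lem:lowRperturbMd} $(i)$, $\pa_\l^\a\mathcal R_{n,x}$ is $o(1)$ on $\l\in[\frac12,2]$.
\begin{itemize}
\item If $\kappa\le C_1$ with $C_1\ll1$, then $\pa_\l\Phi=2(\l-\kappa)+o(1)\gtrsim 1$ on the support.
\item If $\kappa\ge C_2$ with $C_2\gg1$, then $|\pa_\l\Phi|\gtrsim \kappa\gtrsim 1+|\kappa|$.
\end{itemize}
In both cases $|\pa_\l^\a\Phi|\lesssim 1+|\kappa|$ for $\a\geq 2$. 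Integrating by parts once in the integral $\int b_n e^{iL_{n,x}\Phi}d\l$ then gives a bound $\lesssim L_{n,x}^{-1}\sup_\a\|\pa_\l^\a b_n\|_{L^1}\lesssim L_{n,x}^{-1}n^{-1/2}\jap{x}^{\m/4}$. Multiplying by the prefactor $n^{-1}$ yields $(i)$. Only the condition $(x,x')\in\tilde\Omega_n^c$ is used; the extra condition at $(n^{\c},x')$ is harmless.

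\textbf{Part $(ii)$, stationary point.} For $(x,x')\in\tilde\Omega_n$ we have $\kappa\in(C_1,C_2)$. Then $\pa_\l^2\Phi=2+o(1)>0$, $\pa_\l\Phi(C_1/2)<0$ and $\pa_\l\Phi(2C_2)>0$, all uniformly. The intermediate value theorem gives a unique critical point $\l_{n,x,x'}=\kappa+o(1)$, and it is non-degenerate. We then apply the quantitative stationary phase theorem \cite[Theorem 7.7.5]{Hor} with large parameter $|L_{n,x}|$, to the amplitude $b_n$ divided by its size $n^{-1/2}\jap{x}^{\m/4}$. The derivatives of $\Phi$ are bounded uniformly, so the leading term is $|L_{n,x}|^{-1/2}b_n(x,\l_{n,x,x'})e^{iL_{n,x}\Phi(\l_{n,x,x'})-i\pi/4}$. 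The sign $-i\pi/4$ comes from $L_{n,x}<0$ (because $M_1(x)<0$ for $x<0$) while $\pa_\l^2\Phi>0$. The error is $O(|L_{n,x}|^{-1}n^{-1/2}\jap{x}^{\m/4})$, and the prefactor $n^{-1}e^{iL_{n,x}-in^{-1}x'}$ is carried along.

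\textbf{Main obstacle.} The delicate step is the uniformity of the stationary phase constants, since $\chi$ cuts off the amplitude while the critical point may lie anywhere in $(C_1/2,2C_2)$. We would handle this by allowing the critical point to sit near the edge of $\supp\chi$: Hörmander's theorem applies to any compactly supported amplitude with $C^2$ bounds, and its leading term evaluates $b_n$ at the critical point, where it may vanish, which is consistent with the statement. The other point to check is that $\mathcal{R}_{n,x}$ stays $o(1)$ in $C^3$ uniformly; this follows from Lemma \ref{lem:lowRperturbMd} $(i)$ together with $\c<2/\m$.
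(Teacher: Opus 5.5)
Your proposal is correct and follows the paper's proof. For $(i)$ you integrate by parts in \eqref{eq:W_nb_n} using $|\pa_\l\Phi_{n,x,x'}|\gtrsim 1$ on $\supp\chi$ off $\tilde\Omega_n$, and for $(ii)$ you use convexity, the intermediate value theorem and \cite[Theorem 7.7.5]{Hor} with large parameter $|L_{n,x}|$. Your endpoints $C_1/2$, $2C_2$ and your sign bookkeeping for $e^{-i\pi/4}$ are, if anything, more careful than the paper's.

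The only slip is the justification of $|\pa_\l^\a b_n|\lesssim n^{-1/2}\jap{x}^{\m/4}$. The symbol bounds of Proposition \ref{prop:waveexpression2} only give $|\pa_\l^\a b_n|\lesssim 1$, and \eqref{eq:b++asymp2} controls no derivatives. Instead, use the factorization $b_{+,+}=2i\l^{1/2}\,\overline{\mathrm{Wr}(\l)}^{-1}(\l^2-V(x))^{-1/4}\,\overline{\tilde a_{-,-}(x,\l)}$ together with the $\l^{-\a}$ derivative bounds on $\tilde a_{-,-}$ and $\mathrm{Wr}^{-1}$ from \cite{HT}. This yields $|\pa_\l^\a b_{+,+}|\lesssim \l^{1/2-\a}\jap{x}^{\m/4}$, which rescales to the size you need. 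The paper's own proof passes over the same point.
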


\begin{proof}

\noindent$(i)$ We recall that $\supp b_n(x,\cdot)\subset [\frac{1}{2},2]$ from \eqref{eq:W_nb_n}. Therefore, if we take $C_1<\frac{1}{2}<2<C_2$, then we have $|\pa_x\Phi_{n,x,x'}(\l)|\gtrsim 1$ when $x\sim n^{\c}$, $(x,x'),(n^{\c},x')\in \tilde{\Omega}_n^c$ and $n\in\mathbb{N}$ is sufficiently large. Thus, the claim follows from performing the integration by parts in \eqref{eq:W_nb_n} and the symbolic estimates of $b_{+,+}$ given in Proposition \ref{prop:waveexpression2}.

\noindent$(ii)$ By the assumption $\c<2/\m$, \eqref{eq:Phinx}, and Lemma \ref{lem:lowRperturbMd}, we find that if $0<C_1 \ll 1\ll C_2$, and $n\in\mathbb{N}$ is sufficiently large, 
\begin{align}\label{eq:Philamder}
\pa^2 _{\l} \Phi_{n,x, x'} (\lambda)\gtrsim 1, \quad \pa_{\l} \Phi_{n,x, x'} (C_1) <0, \quad \pa_{\l} \Phi_{n,x, x'} (\lambda) (C_2) >0, \quad |\pa^{\alpha} _{\l}\Phi_{n,x, x'} (\lambda)| \lesssim 1
\end{align}
for $x\sim -n^{\c}$, $(x,x')\in \tilde{\Omega}_n$, and $\l\sim 1$ with sufficiently large $n\in\mathbb{N}$. Therefore the intermediate value theorem yields the unique existence of $\lambda_{n,x, x'} \in (C_1/2, 2C_2)$ such that $\pa_{\l} \Phi_{n,x, x'} (\l_{n,x, x'}) =0$. On the other hand, we have $L_{n,x}=\frac{1}{2n^2}M_1(x)\sim -|x|^{1+\frac{\m}{2}}n^{-2}\sim -n^{\c(1+\frac{\m}{2})-2}\to -\infty$ as $n\to \infty$ due to the assumption $\c> 3/(\mu +1)$. Thus, the claim follows from the stationary phase theorem \cite[Theorem 7.7.5]{Hor} with the large parameter $L_{n,x}$, where we note that $\l\sim 1$ holds for $\l\in \supp b_n(x,\cdot)$ due to the cut-off function $\chi(\l)$ in \eqref{eq:W_nb_n}. 
\end{proof}

\begin{proof}[Proof of Theorem \ref{26521446}]
We define
$u_n (x'):= V^{-\frac{1}{p}} _n 1_{\tilde{\Omega}_{n}} (n^{\c}, x') e^{-\frac{i\pi}{4}-iL_{n, n^{\c}} \Phi_{n, n^{\c}, x'} (\lambda_{n, n^{\c}, x'}) + in^{-1} x'}$, where $V_n = n^{\c (1+\frac{\mu}{2}) -1}$. Then we see $\|u_n\|_{L^1}\lesssim V_n^{1-\frac{1}{p}}$ and $W_{-,\pm,\g_c}\chi (nD_x -1)u_n=0$. Thus, it suffices to prove $\|W_nu_n\|_{L^p(|x+n^{\c}|\lesssim \e n^{2-\frac{\c\m}{2}})}\to \infty$ and $\|W_{+,-,\g_c}\chi (nD_x -1)u_n\|_{L^p(|x+n^{\c}|\lesssim \e n^{2-\frac{\c\m}{2}})}\lesssim 1$ as $n\to \infty$.

\underline{\textbf{Estimates on $W_n $}}
By Proposition \ref{prop:p>2lowstphase} $(i)$ and $(ii)$,
\begin{align*}
\left|W_nu_n (x)\right| =&n^{-1}|L_{n,x}|^{-\frac{1}{2}}V^{-\frac{1}{p}} _n\left|\int_{\R}b_n(x,\l_{n,x,x'})e^{i\Psi_{n,x,x'} }1_{\tilde{\Omega}_{n}} (x, x')1_{\tilde{\Omega}_{n}} (n^{\c}, x') dx' \right|\\
&+O(n^{-\frac{3}{2}}|L_{n,x}|^{-1}\jap{x}^{\frac{3}{2}}\|u_n\|_{L^1})
\end{align*}
for $x\sim -n^{\c}$ with $n\gg 1$, where we set $\Psi_{n,x,x'}=L_{n,x}\Phi_{n,x, x'} (\l_{n,x, x'})-L_{n, -n^{\c}} \Phi_{n, n^{\c}, x'} (\lambda_{n, -n^{\c}, x'})$.

We show that
\begin{align}\label{eq:lowphasecancel}
|\Psi_{n,x,x'}|\lesssim\e
\end{align}
for $|x+n^{\c}|\leq \e n^{2-\frac{\c\mu}{2}}$, $(x, x') \in \tilde{\Omega}_{n}$, and $n\gg 1$. We note that $n^{2-\frac{\c\mu}{2}}\ll n^{\c}$ by the assumption $\c>3/(\m+1)$ and hence $x\sim -n^{\c}$ in this region.
 Since $\l_{n,x,x'}$ is the unique solution to $(\pa_{\l}\Phi_{n,x,x'})(\l_{n,x,x'})=0$, we have 
\begin{align*}
|\pa_x\l_{n,x,x'}|=|(\pa_{\l}\pa_x\Phi_{n,x,x'})(\l_{n,x,x'})||(\pa_{\l}^2\Phi_{n,x,x'})(\l_{n,x,x'})|^{-1}\lesssim n^{-\c}
\end{align*}
by Lemma \ref{lem:lowRperturbMd} $(ii)$.
By the definition of $\Phi_{n,x,x'}$ (just after \eqref{eq:W_nb_n}) and Lemma \ref{lem:lowRperturbMd} $(ii)$, we see
\begin{align*}
&|\Phi_{n,x,x'}(\l_{n,x,x'})|+|\Phi_{n,x,x'}(\l_{n,-n^{\c},x'})|\lesssim 1,\,\, |L_{n,x}-L_{n,-n^{\c}}|\lesssim n^{-2}\left|\int_{-n^{\c}}^x\jap{s}^{\frac{\m}{2}}ds \right|\lesssim n^{\frac{\c\m}{2}-2}|x+n^{\c}|\leq \e.
\end{align*}
Now, the left hand side in \eqref{eq:lowphasecancel} is bounded by 
\begin{align*}
&|(L_{n,x}-L_{n,-n^{\c}})\Phi_{n,x, x'} (\l_{n,x, x'})|+|L_{n,-n^{\c}}||\Phi_{n,x, x'} (\l_{n,x, x'})- \Phi_{n, -n^{\c}, x'} (\lambda_{n,x, x'})|\\
&+|L_{n,-n^{\c}}||\Phi_{n,-n^{\c}, x'} (\l_{n,x, x'})- \Phi_{n, -n^{\c}, x'} (\lambda_{n,-n^{\c}, x'})|.
\end{align*}
Since $x\sim -n^{\c}$, $x'\sim -n^{\c(1+\frac{\m}{2})-1}$, $|L_{n,n^{\c}}|\lesssim n^{\c\left(1+\frac{\m}{2}\right)-2}$ in our region, the intermediate value theorem with the bounds proved in Lemma \ref{lem:lowRperturbMd} $(ii)$ and \eqref{eq:Philamder}, implies that this quantity is bounded by
\begin{align*}
n^{-2}n^{\frac{\c\m}{2}}|x-n^{\c}|\cdot 1+n^{\c\left(1+\frac{\m}{2}\right)-2}\cdot n^{-\c}|x-n^{\c}|+n^{\c\left(1+\frac{\m}{2}\right)-2}\cdot 1\cdot n^{-\c}|x-n^{\c}|\lesssim n^{\frac{\c\m}{2}-2}|x-n^{\c}|\leq \e,
\end{align*}
which proves \eqref{eq:lowphasecancel}.

 Proposition \ref{prop:p>2lowstphase} $(ii)$ implies $\l_{n,x,x'}\sim 1$ and hence $|(\l_{n,x,x'}+1)/n|\lesssim n^{-1}$. Since we have chosen $\c<2/\mu$, it holds that $|(\l_{n,x,x'}+1)/n|=o(\jap{x}^{-\mu/2})$ for $|x+n^{\c}|\leq n^{2-\frac{\c\mu}{2}}$ as $n\to \infty$. Taking $\theta\in \R$ such that $e^{i\theta}i(\overline{\mathrm{Wr}(0)})^{-1}=|\mathrm{Wr}(0)|^{-1}$ and applying \eqref{eq:b++asymp2} in the definition of $b_n$ given in \eqref{eq:W_nb_n}, we obtain
\begin{align*}
e^{i\theta}b_n(x,\l_{n,x,x'})= 2|\mathrm{Wr}(0)|^{-1}\chi(\l_{n,x,x'}) n^{-\frac{1}{2}}\l_{n,x,x'}^{\frac{1}{2}}(-V(x))^{-\frac{1}{4}}(1+o(1)).
\end{align*}
Taking $\e>0$ sufficiently small and $n$ sufficiently large, we have
\begin{align*}
\re (e^{i\theta}b_n(x,\l_{n,x,x'})e^{i\Psi_{n,x,x'}})\gtrsim \chi(\l_{n,x,x'}) n^{-\frac{1}{2}}\jap{x}^{\frac{\mu}{4}}
\end{align*}
by $\l_{n,x,x'}\sim 1$, $|V(x)|\sim \jap{x}^{-\mu}$, and \eqref{eq:lowphasecancel}.
Therefore,
\begin{align*}
\left|W_nu_n (x)\right|\gtrsim&\re\left(e^{i\theta} W_nu_n (x)\right)\gtrsim n^{-1}|L_{n,x}|^{-\frac{1}{2}}V^{-\frac{1}{p}}_n\cdot n^{-\frac{1}{2}}\jap{x}^{\frac{\m}{4}} \left|\int_{\R}\chi(\l_{n,x,x'})1_{\tilde{\Omega}_{n}} (x, x')1_{\tilde{\Omega}_{n}} (n^{\c}, x')dx' \right|\\
&+O(n^{-\frac{3}{2}}|L_{n,x}|^{-1}\jap{x}^{\frac{\m}{4}}\|u_n\|_{L^1})\\
\gtrsim& n^{-\frac{3}{2}}|L_{n,x}|^{-\frac{1}{2}}V^{1-\frac{1}{p}}_n\jap{x}^{\frac{\m}{4}} \sim n^{\frac{\c \mu}{4} -1} \cdot n^{\{\c (1+\frac{\mu}{2}) -1\}(\frac{1}{2} -\frac{1}{p})}
\end{align*}
for $x\sim -n^{\c}$ with sufficiently large $n$, where we have used $\|u_n\|_{L^1}\lesssim V_n^{1-\frac{1}{p}}n^{-1}\jap{x}^{1+\frac{\m}{2}}$ and $\left|\int_{\R}\chi(\l_{n,x,x'})1_{\tilde{\Omega}_{n}} (x, x')1_{\tilde{\Omega}_{n}} (n^{\c}, x')dx' \right|\sim V_n$ 
holds if $|x+n^{\c}| \le \epsilon n^{2-\frac{\c \mu}{2}}$. Taking $L^p$ norms on the both sides, we have
\begin{align*}
\|W_n u_n\|_{L^p (\{|x+n^{\c}| \le \epsilon n^{2-\frac{\c \mu}{2}}\})} &\gtrsim n^{\frac{\c \mu}{4} -1} \cdot n^{\{\c (1+\frac{\mu}{2}) -1\}(\frac{1}{2} -\frac{1}{p})} \cdot n^{(2-\frac{\c \mu}{2})\cdot \frac{1}{p}} \\
&\sim n^{(\frac{1}{2} -\frac{1}{p})(\c \mu + \c -3)}\to \infty\quad \text{as $n\to \infty$}
\end{align*}
by the assumption $\c > \frac{3}{\mu +1}$.

\underline{\textbf{Estimates on $W _{+, +, -, \g_c}$}}
We rewrite $W_{+, +, -, \g_c} \chi (nD_x -1) u_n (x)$ as
\begin{align*}
|W_{+, +, -, \g_c} \chi (nD_x -1) u_n (x)| = \left| \int_{\R} \int_{0}^{\infty} \chi (n\l -1) b_{+, -} (x, \l) e^{-iy(x, \l)-i\l x'} d\l u_n(x') dx' \right|.
\end{align*}
Now, by \eqref{eq:lowb+-imp}, the integral in $\lambda$ can be estimated from above as
\begin{align*}
\left| \int_{0}^{\infty} \chi (n\l -1) b_{+,-}(x, \l) e^{-iy(x, \l)-i\l x'} d\l \right| \lesssim \int_{0}^{\frac{10}{n}} \l^{\frac{1}{2}} \jap{x}^{\frac{5\mu}{4} -2} d\l \lesssim \jap{x}^{\frac{5\mu}{4} -2} n^{-\frac{3}{2}}
\end{align*}
for $x<0$.
Therefore, by the support property of $u_n$, we obtain
\begin{align*}
&|W_{+, +, -, \g_c} \chi (nD_x -1) u_n (x)| \lesssim \jap{x}^{\frac{5\mu}{4} -2} n^{-\frac{3}{2}} \|u_n \|_{L^1} \lesssim V^{1-\frac{1}{p}} _n \jap{x}^{\frac{5\mu}{4} -2} n^{-\frac{3}{2}}, \\
&\|W_{+, +, -, \g_c} \chi (nD_x -1) u_n\|_{L^p (\{|x+n^{\alpha}| \le \epsilon n^{2-\frac{\alpha \mu}{2}}\})} \lesssim V^{1-\frac{1}{p}} _n \cdot n^{(\frac{5\mu}{4} -2)\alpha -\frac{3}{2}} \cdot n^{(2-\frac{\alpha \mu}{2})\cdot \frac{1}{p}} = O(1)
\end{align*}
if we take $0 < \alpha - \frac{3}{\mu +1} \ll 1$.
\end{proof}

\end{document}